\newcommand{\pr}{\text{pr }}
\newcommand{\E}{\text{E}}
\newtheorem{thm}{Theorem}[section]
\newtheorem{lemma}{Lemma}[section]
\begin{document}

\def\spacingset#1{\renewcommand{\baselinestretch}%
{#1}\small\normalsize} \spacingset{2}


\begin{center}
{\LARGE {\bf A residual-based bootstrap for functional autoregressions}} \vspace*{5 mm}\\

J{\"u}rgen Franke$^1$, Euna Gesare Nyarige$^2$, \\
$^1$ Department of Mathematics, Technische Universit{\"a}t Kaiserslautern\\
$^2$ Department of Mathematics and Statistics, Machakos University
\vspace*{1 cm}\\
\end{center}

\begin{abstract}
We consider the residual-based or naive bootstrap for functional autoregressions of order 1 and prove that it is asymptotically valid for, e.g., the sample mean and for  empirical covariance operator estimates. As a crucial auxiliary result, we also show that the empirical distribution of the centered sample innovations converges to the distribution of the innovations with respect to the Mallows metric.
\end{abstract}

\noindent%
{\it Keywords:} autoregressive Hilbertian model, bootstrap, functional autoregression, functional time series, sample innovations
\vfill

\spacingset{1.45} 

\section{Introduction}

The seminal work of \citep{Bosq} has initiated a lot of research on the theory, computational aspects and applications of functional data analysis. The recent monograph of \citep{Horvath} and, with a focus on functional time series, the review article of \citep{Kokoszka12} give an overview over the field of research. In this paper, we consider a time series $X_t \in \mathcal{H}, -\infty < t < \infty,$ with values in a Hilbert space $\mathcal{H}$, e.g. curves in a function space like 
L$^2[0,1]$. In particular, we are interested in functional autoregressions, also known as autoregressive Hilbertian models (ARH). As is well known, a functional autoregressive process of order $p$ or FAR($p$)-process can be easily be written as a FAR(1)-process by an appropriate change of state vector and Hilbert space. Therefore, it essentially suffices to consider the case of order 1, where
\begin{equation} \label{FAR1}
X_{t+1}=\Psi(X_t) +\epsilon_{t+1} .
\end{equation}
Here, $\Psi: \mathcal{H} \to \mathcal{H}$ is a linear operator, and $\epsilon_t \in \mathcal{H}$ are independent, identically distributed (i.i.d.) innovations. Recently, several new statistical methods for data generated by (\ref{FAR1}) have been proposed, in particular regarding tests and forecasts. \citep{Kokoszka} have investigated a test of the hypothesis $\Psi=0$, i.e. of independence of the data. \citep{Gabrys} consider a related problem, a test of independence for general functional time series. \citep{HHK} propose a CUSUM test for a sudden change in the dependence structure of the data, i.e. for the presence of a point in time where the value of $\Psi$ changes, which has been applied to neurophysiological data by \citep{Franke18}. Other papers concentrate on the task of forecasting the data. \citep{Didericksen} present an empirical study of forecasting $X_{t+1}$ by $\hat{\Psi}(X_t)$ where $\hat{\Psi}$ denotes some estimate of $\Psi$. \citep{Kargin} develop an appropriate theory for a particular kind of estimate $\hat{\Psi}$. Also, forecasting on the basis of FAR(1) models has been used in a lot of applications partly discussed below in the context of the bootstrap.\\

Asymptotics for the distribution of estimates of the autoregressive operator $\Psi$ is involved, as pointed out by \citep{Mas}, and as, additionally, it frequently provides decent approximations only for large sample sizes, a lot of applied papers use resampling techniques to derive critical values for tests or prediction intervals for forecasts (compare, e.g., \citep{Shang} for an overview). The theory for bootstrapping functional data, which provides guidelines under which circumstances bootstrap approximations are valid, is, however, still rather incomplete. E.g., only recently \citep{Paparoditis} show that bootstrap methods work for testing the equality of means and covariance operators in $K$ samples of independent functional data.

We are, in particular, interested in the residual-based bootstrap where resampling is done on the basis of the centered sample residuals $\hat{\epsilon}_t = X_t - \hat{\Psi}(X_{t-1})$. This kind of bootstrap is quite common in the context of scalar autoregressive and ARMA models (compare \citep{Kreiss}) and forms the starting point for the widely applicable autoregressive sieve bootstrap (compare \citep{KreissPP}). 

This kind of bootstrap has been investigated in the analogous, but, from the viewpoint of theory, considerably simpler regression situation. \citep{GMMC} discuss the linear functional regression model $Y_t = \Psi(X_t) + \epsilon_t$, where $Y_t$ is scalar and $\Psi: \mathcal{H} \to \mathbb{R}$ is a linear functional. Treating $X_t$ as fixed which is common in the regression context, they prove that the residual-based bootstrap and, for heteroscedastic residuals $\epsilon_t$, the wild bootstrap works. In the same model, \citep{GMGRMCGP} apply the pairwise bootstrap and the wild bootstrap to a test of the hypothesis $\Psi=0$. \citep{Ferraty10} consider the functional regression model with general, not necessarily linear operators $\Psi$ and prove that the residual-based and the wild bootstrap works for nonparametric kernel estimates of $\Psi$. \citep{Ferraty12} extend those results to the case where the response variable is also of functional nature, e.g. $Y_t \in \mathcal{H}$. \citep{Zhu} and \citep{Rana} discuss the analogous situation for nonparametric functional autoregressions, considering the regression bootstrap and the wild bootstrap respectively (compare \citep{Franke02} for these concepts, their advantages and drawbacks in the scalar case), but not the residual-based bootstrap. 

Bootstrap techniques are also quite popular in approximating the distribution of statistics from  functional time series data. \citep{Horvath} use in their section 14.1 the residual-based bootstrap for evaluating the performance of a test for a change in the autoregressive operator of a FAR(1)-process. \citep{APCVF} consider the nonparametric FAR($d$)-model $X_t = m(X_{t-1},\ldots,X_{t-d}) + \epsilon_t$, estimate the autoregression operator $m(.)$ nonparametrically by kernel and local linear estimates and apply the residual-based bootstrap to get prediction intervals. \citep{Romo} discuss the residual-based bootstrap for the integrated FAR(1)-model, i.e. for the special case $\Psi = Id_{\mathcal{H}}$, the Hilbert space identity. They derive bootstrap approximations of critical bounds for unit root tests where, under the hypothesis, $\Psi$ is known. \citep{Castro} investigate among other bootstrap techniques a variant of the residual-based bootstrap in forecasting applications. They start from the centered sample residuals, but do not resample directly from their empirical distribution. They first consider a finite principal component decomposition of the sample residuals and, then, resample the coefficients of this decomposition separately. In a similar spirit, \citep{Hyndman} assume from the start that the time series has a finite Karhunen-Lo{\`e}ve expansion which allows to reduce the functional time series to the finite-dimensional time series of the coefficients. They derive bootstrap  prediction intervals based on bootstrap confidence intervals for the scalar coefficient time series. All these papers focus on simulations and applications and do not consider the accompanying theory. This gap is filled for the stationary bootstrap, which is a variant of the well-known block bootstrap with random block lengths, in an early paper of \citep{Politis}. They consider general Hilbert space valued times series and prove, based on a central limit theorem for triangular arrays of such data, that this bootstrap provides valid approximations for the asymptotic distribution of certain statistics.

Based on the thesis \citep{Nyarige}, we show in this paper that the residual-based bootstrap is applicable to FAR(1)-processes. The theory has direct practical implications as, e.g., the necessary centering of the lag-1 autocovariance operator in the bootstrap world is different from what one would naively expect due to the particular nature of the estimate of $\Psi$. For the proof, we cannot use the approach of \citep{GMMC} for the residual-based bootstrap in regression and of \citep{Politis} for the stationary bootstrap who, for the bootstrap data, both mimic the proof of asymptotic normality of the corresponding functions of the real data. We have to use different methods which are similar to the scalar situation presented by \citep{Franke}; more details will be given in section \ref{secboot}.\\

In section \ref{secmod} we describe the details of our model including the relevant assumptions, and we introduce some estimates from the literature which we need later on. 

In section \ref{secres} we present the crucial result that the empirical distribution of the centered sample innovations converges to the distribution of the innovations. 

In section \ref{secboot} we give the details for the residual-based bootstrap and, as an illustration, state that it works for estimates of the mean and of the first two covariance operators of the data.

Finally, technical results and proofs are given in the appendix.\\

\section{The Model and the Estimates} \label{secmod}

In this section, we mainly collect some properties of our model and some estimates which are standard in the literature on functional autoregressions and which we need later on. This also serves to introduce notation. 

Let $\mathcal{H}$ be separable Hilbert space with scalar product $\langle ., . \rangle$ and norm $\Vert . \Vert$. As a norm for bounded linear operators from $\mathcal{H}$ to $\mathcal{H}$ like $\Psi$ we use 
$$
\Vert\Psi\Vert_\mathcal{L} = \sup \{\Vert \Psi(x) \Vert; \Vert x \Vert = 1 \}.
$$
A sufficient condition for the existence of a stationary solution of (\ref{FAR1}) is $\Vert\Psi\Vert_\mathcal{L} < 1$ (compare \citep{Bosq}, section 3.2). We call a linear operator 
$\Psi$ \textit{compact} if for two orthonormal bases $v_j, j \ge 1,$ and $u_j, j \ge 1,$ of $\mathcal{H}$ and a sequence of real numbers $\gamma_j, j \ge 1,$ converging to 0,
$$
\Psi(x) = \sum_{j=1}^\infty \gamma_j \langle x,v_j \rangle u_j , \quad x \in \mathcal{H}.
$$
$\Psi$ is, in particular, a \textit{Hilbert-Schmidt operator} if $\Vert \Psi \Vert_\mathcal{S}^2 = \sum_{j=1}^\infty \Vert \Psi(v_j) \Vert^2 = \sum_{j=1}^\infty \gamma_j^2 < \infty$. The Hilbert-Schmidt norm $\Vert \Psi \Vert_\mathcal{S}$ is an upper bound for $\Vert \Psi \Vert_\mathcal{L}$. The Hilbert-Schmidt operators $A, B: \mathcal{H} \to \mathcal{H}$ form a Hilbert space themselves with a scalar product given by
$$
\langle A, B \rangle_\mathcal{S} = \sum_{j=1}^\infty \langle A(u_j), B(u_j)\rangle
$$
for an arbitrary orthonormal basis $u_1, u_2, \ldots$ of $\mathcal{H}$ (compare \citep{Horvath}, section 2.1).\\

For the definition of covariance operators, it is convenient to introduce the Kronecker product $y \otimes z$ of $y, z \in \mathcal{H}$ which is a linear operator defined by
$$
(y \otimes z)(x) = \langle y , x \rangle z, \quad x \in \mathcal{H}.
$$
For later reference, we state two rules of calculation which we use repeatedly and which follow immediately from the definition
\begin{equation} \label{kronecker}
z \otimes y = (y \otimes z)^T, \quad \quad A(y) \otimes B(z) = B (y \otimes z) A^T, \quad \quad y, z \in \mathcal{H},
\end{equation}
where $A, B$ are two linear operators on $\mathcal{H}$ and here and the following $A^T$ denotes the adjoint of the linear operator $A$ which is characterized by $\langle A(y),z\rangle = \langle y,A^T(z)\rangle$ for all $y, z \in \mathcal{H}$.\\

We assume throughout the paper that the data $X_0, \ldots, X_n$ are part of a stationary functional autoregression (\ref{FAR1}) with mean $\E X_t =0$. Correspondingly, the covariance operator and the lag 1-autocovariance operator are given by $\Gamma = \E X_t \otimes X_t$ and $C = \E X_t \otimes X_{t+1}$. Furthermore, we always assume that 0 is not an eigenvalue of $\Gamma$. Then, all eigenvalues $\lambda_1 \ge \lambda_2 \ge \ldots$ of $\Gamma$ are positive. Let $v_1, v_2, \ldots$ denote the corresponding orthonormal eigenvectors in $\mathcal{H}$.

$\Gamma, C$ are related to the autoregressive operator $\Psi$ by the analogue to the scalar Yule-Walker equation 
\begin{equation} \label{YW}
\Psi \Gamma = C
\end{equation}

The mean $\E X_t$ is estimated as usual by the sample mean
$$
\bar{X}_n = \frac{1}{n} \sum_{j=0}^{n-1} X_j .
$$
As estimates of $\Gamma, C$ we follow \citep{Horvath} and use the simplified sample versions
$$
\hat{\Gamma}_n = \frac{1}{n} \sum_{j=0}^{n-1} X_j \otimes X_j, \quad \hat{C}_n = \frac{1}{n}  \sum_{j=0}^{n-1} X_j \otimes X_{j+1}.
$$
We use the last observation $X_n$ only in estimating $C$ to streamline notation later on. Due to the same reason, we do not center the $X_j$ around $\bar{X}_n$ in the definitions of $\hat{\Gamma}_n, \hat{C}_n$. Under our assumption $\E X_t = 0$, this has an asymptotically neglible effect. All results remain true in the general case $\E X_t = \mu \in \mathcal{H}$ but then we of course have to center the data around 0 in calculating the covariance estimates.

$\hat{\lambda}_j, \hat{v}_j$ denote the eigenvalues and eigenvectors of $\hat{\Gamma}_n$. Solving the Yule-Walker equation (\ref{YW}) is an ill-conditioned problem as $\Gamma^{-1}$ is not a bounded linear operator defined on the whole space $\mathcal{H}$. Therefore, $\hat{\Gamma}_n^{-1}$ has to be regularized. We use the popular approach via a finite principal component expansion, compare \citep{Bosq}, \citep{Horvath}, and consider
$$
\hat{\Gamma}^\dagger_n = \sum_{j=1}^{k_n} \frac{1}{\hat{\lambda}_j} \hat{v}_j\otimes \hat{v}_j,
$$
where $k_n \to \infty$ slowly for $n \to \infty$ to get a consistent estimate of $\Psi$. Note that $\hat{\lambda}_j^{-1}$ is an eigenvector of $\hat{\Gamma}_n^{-1}$, and $\hat{v}_j\otimes \hat{v}_j(x)$ is the orthogonal projection of $x$ onto the span of the eigenvector $\hat{v}_j$. Then, we get as an estimate of $\Psi$
$$
\hat{\Psi}_n = \hat{C}_n \hat{\Gamma}^\dagger_n .
$$

\section{Approximation of the innovation distribution by the empirical measure of sample residuals} \label{secres}

The basis for residual-based bootstrapping in scalar regression and autoregression models is the approximability of the innovations by the bootstrap innovations where the latter are drawn from the centered sample residuals. This is stated in the following theorem in terms of the Mallows metric $d_2$ which is discussed in detail by \citep{Bickel}. For two distributions $F, G$ on $\mathcal{H}$, it is defined by
$$
d_2^2(F,G) = \inf_{X, Y} \ \E \Vert X-Y \Vert^2 ,
$$
where the infimum is taken over all $\mathcal{H}$-valued random variables $X$ and $Y$ with marginal distributions $F$ resp. $G$. By Lemma 8.1. of \citep{Bickel} the infimum is attained.

By $F, \hat{F}_n$, we denote the distribution of $\epsilon_t$ respectively the empirical distribution of the centered sample residuals $\tilde{\epsilon}_1,\ldots,\tilde{\epsilon}_n$ with 
\begin{equation} \label{empres}
\tilde{\epsilon}_j=\hat{\epsilon}_j-\dfrac{1}{n}\sum_{k=1}^n\hat{\epsilon}_k, \quad \hat{\epsilon}_j=X_j-\hat{\Psi}_n\left(X_{j-1}\right), \quad j=1,\ldots,n.
\end{equation}

\begin{thm}
Let $X_0,\ldots,X_n$ be a sample from a stationary FAR(1) process satisfying\\
i) $\{\epsilon_t\}$ i.i.d., $\E\epsilon_t=0, \, \E\left\Vert \epsilon_t \right\Vert^4 < \infty$, \\
ii) $\Psi$ is a Hilbert-Schmidt operator with $\left\Vert \Psi \right\Vert_{\mathcal{L}} <1$, \\
iii) the eigenvalues $\lambda_1>\lambda_2>\ldots$ of $\Gamma$ are all positive and have multiplicity 1.\\
Then,
\begin{equation*}
d_2\left(\hat{F}_n,F\right)\underset{p}\rightarrow 0 \quad \text{for } n\to \infty,
\end{equation*}
if $k_n \to \infty$ and, with $a_1=\lambda_1-\lambda_2,\; a_j = \min(\lambda_{j-1}-\lambda_j, \lambda_j - \lambda_{j+1}), \; j \ge 2$,  
\begin{equation} \label{ratekn}
\frac{k_n}{n} \sum_{j=1}^{k_n} \frac{1}{a_j^2} \to 0 \; \text{ for }\; n \to \infty \quad \text{ and } \quad \frac{1}{\lambda_{k_n}} = O\left(\frac{n^{1/4}}{(\log n)^{\beta}}\right) \; \text{ for some }\; \beta > \frac{1}{2}.
\end{equation}
\label{theorem2}
\end{thm}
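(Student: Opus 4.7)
The plan is to compare $\hat{F}_n$ to the (unobservable) empirical distribution $F_n^\epsilon$ of the true innovations $\epsilon_1,\ldots,\epsilon_n$ via the triangle inequality
$$
d_2(\hat{F}_n, F) \leq d_2(\hat{F}_n, F_n^\epsilon) + d_2(F_n^\epsilon, F).
$$
The second summand vanishes almost surely by the Bickel--Freedman characterisation of Mallows convergence (weak convergence plus convergence of second moments), which for i.i.d.\ Hilbert-valued data with finite second moment follows from the standard law of large numbers. For the first summand, the coupling that pairs each $\tilde{\epsilon}_j$ with $\epsilon_j$ yields
$$
d_2^2(\hat{F}_n, F_n^\epsilon) \leq \frac{1}{n}\sum_{j=1}^n \|\tilde{\epsilon}_j - \epsilon_j\|^2 \leq \frac{2}{n}\sum_{j=1}^n \|\hat{\epsilon}_j - \epsilon_j\|^2 + 2\Bigl\|\frac{1}{n}\sum_{k=1}^n\hat{\epsilon}_k\Bigr\|^2,
$$
and since $\hat{\epsilon}_j - \epsilon_j = (\Psi - \hat{\Psi}_n)(X_{j-1})$ and the Hilbert-space CLT gives $\|\bar{\epsilon}_n\|^2 = O_p(n^{-1})$, everything reduces to proving that the in-sample prediction error $S_n := n^{-1}\sum_{j=1}^n \|(\Psi-\hat{\Psi}_n)(X_{j-1})\|^2$ tends to zero in probability.

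For $S_n$, I would exploit $C=\Psi\Gamma$ and $\hat{\Gamma}_n \hat{\Gamma}_n^\dagger = \hat{\pi}_{k_n}$, the orthogonal projection onto $\operatorname{span}\{\hat{v}_1,\ldots,\hat{v}_{k_n}\}$, to decompose
$$
\Psi - \hat{\Psi}_n \;=\; \Psi(I - \hat{\pi}_{k_n}) \;+\; \bigl[\Psi(\hat{\Gamma}_n - \Gamma) - (\hat{C}_n - C)\bigr]\hat{\Gamma}_n^\dagger \;=:\; T_1 + T_2\hat{\Gamma}_n^\dagger.
$$
Since $\sum_j \langle X_{j-1},\hat{v}_i\rangle^2 = n\hat{\lambda}_i$, the $T_1$-contribution to $S_n$ is bounded by $\|\Psi\|_\mathcal{L}^2 \sum_{i > k_n}\hat{\lambda}_i$, which vanishes because $\Gamma$ is trace-class and the $\hat{\lambda}_i$ approximate $\lambda_i$. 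The $T_2\hat{\Gamma}_n^\dagger$-contribution is at most $\|T_2\|_\mathcal{L}^2 \sum_{i\le k_n} 1/\hat{\lambda}_i$; by the Hilbert--Schmidt CLT for $\hat{\Gamma}_n$ and $\hat{C}_n$ (for which the fourth-moment assumption in (i) is essential) one has $\|T_2\|_\mathcal{L} = O_p(n^{-1/2})$, so this term is $O_p\bigl(n^{-1}\sum_{i \le k_n}1/\hat{\lambda}_i\bigr)$.

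The crucial step, and in my view the main obstacle, is then replacing $\hat{\lambda}_i$ by $\lambda_i$ uniformly on $i\le k_n$ so that the rate assumption (\ref{ratekn}) can be invoked. I would use the Bosq-type perturbation bound $|\hat{\lambda}_i - \lambda_i|\leq \|\hat{\Gamma}_n - \Gamma\|_\mathcal{L} = O_p(n^{-1/2})$: the second half of (\ref{ratekn}), $\lambda_{k_n}^{-1} = O(n^{1/4}/(\log n)^\beta)$, implies $\lambda_{k_n} \gg n^{-1/2}$ and hence $\hat{\lambda}_i \ge \lambda_i/2$ uniformly for $i\le k_n$ on an event of probability tending to one, so that $n^{-1}\sum_{i\le k_n}1/\hat{\lambda}_i \lesssim n^{-1}\sum_{i\le k_n}1/\lambda_i$. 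The gap quantities $a_j$ are precisely what controls the perturbation of the eigenvectors $\hat{v}_i$ used to build $\hat{\pi}_{k_n}$ and $\hat{\Gamma}_n^\dagger$; the first half of (\ref{ratekn}) is the quantitative condition that keeps the eigenvector-replacement errors negligible after pre-multiplication by the unbounded factor $\hat{\Gamma}_n^\dagger$. Assembling these pieces gives $S_n \to 0$ in probability and hence the claim.
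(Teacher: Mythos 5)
Your high-level skeleton coincides with the paper's: the triangle inequality through the empirical law of the true innovations, the uniform-index coupling bound for $d_2^2(\hat{F}_n,F_n^\epsilon)$, and the reduction to the in-sample prediction error $n^{-1}\sum_j\Vert(\Psi-\hat{\Psi}_n)(X_{j-1})\Vert^2\rightarrow 0$ in probability. Where you genuinely diverge is in controlling that prediction error. The paper splits $\hat{\Psi}_n-\Psi$ into \emph{three} pieces, $(\hat{\Psi}_n-\Psi\hat{\Pi}_{k_n})+\Psi(\hat{\Pi}_{k_n}-\Pi_{k_n})+\Psi(\Pi_{k_n}-I)$, and the middle piece forces it through the eigenfunction perturbation bound $\E\sum_{j\le k_n}\Vert\hat{c}_j\hat{v}_j-v_j\Vert^2=O(n^{-1}\sum_{j\le k_n}a_j^{-2})$ plus a Cauchy--Schwarz step that costs an extra factor $k_n$ --- which is exactly where the first half of (\ref{ratekn}) is consumed at full strength. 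Your two-term decomposition $\Psi(I-\hat{\Pi}_{k_n})+T_2\hat{\Gamma}_n^\dagger$ (with $T_2=\Psi\hat{\Gamma}_n-\hat{C}_n$, the negative of the paper's $\frac{1}{n}S_n$) bypasses eigenfunctions entirely by exploiting the exact identity $n^{-1}\sum_j\langle X_{j-1},\hat{v}_i\rangle^2=\hat{\lambda}_i$, yielding the bounds $\Vert\Psi\Vert_{\mathcal{L}}^2\sum_{i>k_n}\hat{\lambda}_i$ and $\Vert T_2\Vert_{\mathcal{L}}^2\sum_{i\le k_n}\hat{\lambda}_i^{-1}$, both sharper than the paper's. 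Two steps you only gesture at need to be written out, and the first needs more than ``the $\hat{\lambda}_i$ approximate $\lambda_i$'' (a term-by-term comparison over infinitely many indices would not close): (a) $\sum_{i>k_n}\hat{\lambda}_i=\operatorname{tr}\hat{\Gamma}_n-\sum_{i\le k_n}\hat{\lambda}_i\rightarrow 0$ in probability via $\operatorname{tr}\hat{\Gamma}_n=n^{-1}\sum_j\Vert X_j\Vert^2\rightarrow\operatorname{tr}\Gamma$ and $\sum_{i\le k_n}\hat{\lambda}_i\ge\sum_{i\le k_n}\lambda_i-k_n\Vert\hat{\Gamma}_n-\Gamma\Vert_{\mathcal{L}}$, where $k_n\Vert\hat{\Gamma}_n-\Gamma\Vert_{\mathcal{L}}=o_p(1)$ because $k_n^2/n\le\lambda_1^2\,\frac{k_n}{n}\sum_{j\le k_n}a_j^{-2}\rightarrow 0$; (b) $n^{-1}\sum_{i\le k_n}\lambda_i^{-1}\rightarrow 0$, which follows from $\lambda_i^{-1}\le a_i^{-1}\le\lambda_1 a_i^{-2}$ and (\ref{ratekn}). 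Finally, your closing sentence attributes to the gaps $a_j$ a role they play only in the paper's route: in your decomposition no eigenvector replacement occurs, the $a_j$ enter only through (a) and (b) above, and your argument therefore uses (\ref{ratekn}) well below full strength --- a genuine (if modest) improvement, obtained at no extra cost.
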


A fourth moment condition like i) is not unexpected, as $\hat{\Psi}_n$ depends on $\hat{\Gamma}_n, \hat{C}_n$ which are quadratic in the data and which we want to be $\sqrt{n}$-consistent estimates. Condition ii) may be relaxed to $\left\Vert \Psi^{j_0} \right\Vert_{\mathcal{L}} <1$ for some $j_0 \ge 1$ as in the work of \citep{Bosq}; we prefer the somewhat stronger assumption to simplify the proofs. The positivity of the eigenvalues in iii) is necessary to exclude singular cases. Assuming dimension 1 of all eigenspaces is standard in the literature on functional autoregressions to circumvent the notational problems with the nonuniqueness of eigenvectors generating a particular eigenspace, but it is not essential for the validity of the results.

The following lemma illustrates the meaning of the rate condition (\ref{ratekn}) for two particular examples where we impose lower bounds on $\kappa_j = \lambda_j-\lambda_{j+1}$ which is related to the rate of decrease of the eigenvalues. If $\kappa_j$ is allowed to decrease exponentially fast, then $k_n$ may increase at most logarithmically in $n$. If $\kappa_j$ may converge to 0 only with a polynomial rate in $j^{-1}$ then $k_n$ may increase faster like $n^c$ for appropriate $c>0$. These kinds of relationship between $k_n$ and the rate of decrease of the eigenvalues $\lambda_j$ is quite plausible regarding the character of $k_n$ as a regularization parameter. In similar situations, \citep{Guillas} found the same kind of rate conditions in his study of the convergence rate of $\hat{\Psi}_n$. 

\begin{lemma} \label{logratekn1}
a) Let $\lambda_j - \lambda_{j+1} \ge b a^j, \ j= 1, 2, \ldots$ for some $0 < a < 1, b > 0$. Then, 
(\ref{ratekn}) is satisfied for $n, k_n \to \infty$ if, for all large enough $n$, 
\begin{equation*}
k_n \le \left(\dfrac{1}{4\log \frac{1}{a}} - \delta \right) \log n \quad \text{ for some }\ \delta > 0.
\end{equation*}
b) Let $\lambda_j - \lambda_{j+1} \ge b j^{-a}, \ j= 1, 2, \ldots$ for some $a > 1, b > 0$. Then, 
(\ref{ratekn}) is satisfied for $n, k_n \to \infty$ if 
\begin{equation*}
k_n = O\left( n^{\frac{1}{4a}-\delta}\right)  \; \text{ for some }\ \delta > 0.
\end{equation*}
\end{lemma}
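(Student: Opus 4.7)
The plan is to verify the two conditions in \eqref{ratekn} directly from the assumed lower bounds on $\kappa_j = \lambda_j - \lambda_{j+1}$. The key preliminary observation is that for $j\ge 2$,
$$
a_j = \min(\kappa_{j-1},\kappa_j) \ge \min(\kappa_{j-1},\kappa_j),
$$
and since in both settings the hypothesized lower bound for $\kappa_j$ is a non-increasing function of $j$, we get $a_j \ge $ (the lower bound evaluated at $j$). Also, because $\Gamma$ is trace class (so $\lambda_j \downarrow 0$), we may write $\lambda_{k_n} = \sum_{j\ge k_n}\kappa_j$ and lower-bound this via the hypothesized lower bound on $\kappa_j$. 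With these two ingredients, both sides of \eqref{ratekn} reduce to elementary estimates.

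For part (a), $a_j \ge b\,a^j$, hence
$$
\sum_{j=1}^{k_n}\frac{1}{a_j^2} \le \frac{1}{b^2}\sum_{j=1}^{k_n} a^{-2j} \le C\,a^{-2k_n},
\qquad
\lambda_{k_n} \ge b\sum_{j\ge k_n}a^j = \frac{b\,a^{k_n}}{1-a}.
$$
Condition \eqref{ratekn}(i) becomes $k_n a^{-2k_n}=o(n)$, i.e.\ $2k_n\log(1/a)+\log k_n<\log n$, which is satisfied whenever $k_n \le \bigl(\tfrac{1}{2\log(1/a)}-\delta'\bigr)\log n$. Condition \eqref{ratekn}(ii) becomes $a^{-k_n}=O\!\bigl(n^{1/4}/(\log n)^\beta\bigr)$, i.e.\ $k_n\log(1/a)\le \tfrac14\log n-\beta\log\log n+O(1)$, which is the binding one and fixes the constant $\tfrac{1}{4\log(1/a)}$ stated in the lemma. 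Any $\delta>0$ absorbs the $\log\log n$ term, so the stated upper bound on $k_n$ suffices.

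For part (b), $a_j \ge b\,j^{-a}$ with $a>1$, hence
$$
\sum_{j=1}^{k_n}\frac{1}{a_j^2}\le \frac{1}{b^2}\sum_{j=1}^{k_n}j^{2a} \le C\,k_n^{2a+1},
\qquad
\lambda_{k_n}\ge b\sum_{j\ge k_n}j^{-a}\ge C'\,k_n^{-(a-1)}.
$$
Condition \eqref{ratekn}(i) becomes $k_n^{2a+2}=o(n)$, and \eqref{ratekn}(ii) becomes $k_n^{a-1}=O\!\bigl(n^{1/4}/(\log n)^\beta\bigr)$. For the choice $k_n=O(n^{1/(4a)-\delta})$, one checks
$$
(2a+2)\!\left(\tfrac{1}{4a}-\delta\right)=\tfrac12+\tfrac{1}{2a}-(2a+2)\delta<1,\qquad
(a-1)\!\left(\tfrac{1}{4a}-\delta\right)=\tfrac14-\tfrac{1}{4a}-(a-1)\delta<\tfrac14,
$$
so both conditions are satisfied with room to spare for any $\delta>0$, $\beta>\tfrac12$.

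There is no real obstacle here; the proof is essentially a bookkeeping exercise comparing exponential or polynomial rates. The only place that needs care is the exponential case, where one must identify condition \eqref{ratekn}(ii) as the binding one in order to recover the exact constant $\tfrac{1}{4\log(1/a)}$, and absorb the $\beta\log\log n$ correction into the slack $\delta$. For the polynomial case, the stated rate $n^{1/(4a)-\delta}$ is not sharp—tighter exponents $\tfrac{1}{2a+2}$ and $\tfrac{1}{4(a-1)}$ arise from the two conditions separately—but $\tfrac{1}{4a}$ is a clean common upper bound that uniformly fulfils both for every $a>1$.
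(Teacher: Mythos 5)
Your proof is correct and follows essentially the same route as the paper: direct verification of both parts of (\ref{ratekn}) via geometric resp.\ polynomial sum estimates, with condition (ii) identified as the binding one in the exponential case. The only cosmetic difference is that you lower-bound $\lambda_{k_n}$ by telescoping $\lambda_{k_n}=\sum_{j\ge k_n}\kappa_j$ (using that $\lambda_j\downarrow 0$), whereas the paper uses the one-line bound $\lambda_j \ge \lambda_j-\lambda_{j+1}\ge a_j$; both give what is needed, yours being marginally sharper in part (b) without affecting the stated rate.
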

\begin{proof}
a) From the condition of the lemma, we immediately have $\lambda_j \ge  a_j \ge b a^j$. Using the formula for geometric sums, 
$$
\frac{k_n}{n} \sum_{j=1}^{k_n} \frac{1}{a_j^2} \le \frac{k_n}{n b^2 a^{2 k_n}} \sum_{j=1}^{k_n} a^{2(k_n-j)} \le \frac{1}{b^2 (1-a^2)} \ \frac{k_n}{n a^{2 k_n}} \to 0
$$
as $\log n - 2 k_n \log \frac{1}{a} - \log k_n \ge \frac{1}{2} \log n - \log \log n - \log c \to \infty$. Moreover we have
$$
\frac{1}{\lambda_{k_n}} \le \frac{1}{b a^{k_n}} \le \frac{1}{b} \frac{n^{1/4}}{(\log n)^{\beta}}
$$
for large enough $n$, as, for some $\delta > 0$ and all $\beta > 0$, again for large enough $n$,
$$
k_n \log \frac{1}{a} \le (c \log \frac{1}{a}) \log n \le (\frac{1}{4} - \delta) \log n \le \frac{1}{4} \log n - \beta \log \log n .
$$
b) The proof proceeds in a similar manner as for part a), using $\lambda_j^{-1} \le a_j^{-1} \le \frac{1}{b} j^a$ and
$$
\sum_{j=1}^{k_n} j^{2a} \le \sum_{j=1}^{k_n} \int_j^{j+1} u^{2a} du = \int_1^{k_n+1} u^{2a} du \le \frac{1}{2a+1} (k_n+1)^{2a+1} .
$$
\end{proof}

\section{The residual-based bootstrap} \label{secboot}

We start with a sample $X_0, \ldots, X_n$ from a stationary functional autoregression (\ref{FAR1}). The basic idea of the bootstrap is to replace the data $X_t$ by pseudodata $X_t^*$, calculated from the given sample, with two features:\\
i) The distribution of certain functions $T(X_0, \ldots, X_n)$ of the data can be approximated by the conditional distribution of the corresponding functions $T(X_0^*, \ldots, X_n^*)$ of the pseudodata given $X_0, \ldots, X_n$.\\
ii) The conditional distribution of $T(X_0^*, \ldots, X_n^*)$ given $X_0, \ldots, X_n$ is known such that distributional characteristics like moments or quantiles can be numerically calculated by Monte Carlo simulation.\\

In this section, we generalize the well-known residual-based bootstrap for scalar ARMA-processes, compare, e.g. \citep{Kreiss}, to the functional setting. Let $\tilde{\epsilon}_1, \ldots, \tilde{\epsilon}_n$ be the centered sample residuals given by (\ref{empres}), and let $\hat{F}_n$ be their empirical distribution function. The procedure for generating the pseudodata $X_t^*$ is the following:

\textbf{1)} Draw bootstrap innovations $\epsilon_t^*, t=1, \ldots, n$, purely randomly from the centered sample residuals:
$$
\pr^*(\epsilon_t^* = \tilde{\epsilon}_k) = \frac{1}{n}, \quad t, k = 1, \ldots, n,
$$
such that the $\epsilon_t^*$ are i.i.d. with distribution $\hat{F}_n$ conditional on the original data. Here and in the following, we write $\pr^*, \E^*$ for conditional probabilities and expectations given $X_0, \ldots, X_n$.

\textbf{2)} We generate the bootstrap data $X_t^*, t=1, \ldots, n$, recursively by
$$
X_t^* = \hat{\Psi}_n(X_{t-1}^*) + \epsilon_t^*, \quad t= 1, \ldots, n,
$$ 
for some suitable initial value $X_0^*$.

If $n$ is large, the choice of $X_0^*$ is of minor importance due to the exponentially decreasing memory of our stationary FAR(1)-process. This follows from its representation as an infinite moving average process (e.g. Theorem 13.1 of \citep{Horvath}) together with $\Vert \Psi^j\Vert_\mathcal{L} \le \Vert\Psi \Vert_\mathcal{L}^j$ and $\Vert\Psi\Vert_\mathcal{L} < 1$. Popular choices are $X_0^* = X_0$, which are used in the simulations of \citep{Nyarige}, or $X_0^* = \E X_0 = 0$. \\

Let us remark that the theory of the residual bootstrap has already been studied for the quite similar functional linear regression model $Y_j = \Psi(X_j) + \epsilon_j$ with real-valued $Y_j, \epsilon_j$ and functional regressors $X_j$ by \citep{GMMC}. Note that the situation there is much simpler, not only due to the lack of dependence, but equally due to the fact that, by construction, $X_j^* = X_j$. Therefore, the regressors $X_j^*$ in the bootstrap world trivially satisfy exactly the same assumptions as the real regressors $X_j$ which is quite useful in showing that the same kind of asymptotics holds for functions of the real resp. the bootstrap data. In particular, the critical covariance operator estimate $\hat{\Gamma}_n$, for which we need a regularized inverse, and its eigenvalues and eigenfunctions are the same for the real and the bootstrap data, i.e. Theorem \ref{bootcov} below is trivially satisfied in the regression context. Obviously, for functional autoregressions, those assertions do not hold, and we cannot use the proof of validity of the bootstrap for the regression case at all, but have to use quite different arguments.\\

The regression and wild bootstrap, considered by \citep{Zhu} respectively \citep{Rana} for nonparametric functional autoregressions, also use $X_j^* = X_j$, i.e. they do not mimic the whole time series in the bootstrap world but only the local predictor relationship. So, for proofs, they can rely on the same kind of simpler methods as in the case of regression with independent data.\\

\subsection{Bootstrapping the sample mean}

In this subsection we investigate the sample mean and its analogue in the bootstrap world
$$
\bar{X}_n = \frac{1}{n} \sum_{t=0}^{n-1} X_t, \quad \quad \bar{X}_n^* = \frac{1}{n} \sum_{t=0}^{n-1} X_t^*.
$$
Note that $\E \bar{X}_n = 0$. In the proof, we show that for the bootstrap analogue $\E^* \bar{X}_n^* = 0$ also holds. Therefore, we have to compare the distributions of $\bar{X}_n$ and $\bar{X}_n^*$ without additional centering. In the next theorem and in the following, we use a common convention and write $d_2(X,Y)$ for the Mallows distance $d_2(F,G)$ between the marginal distributions $F, G$ of the random variables $X$ resp. $Y$.\\

\begin{thm}\label{bootmean}
Under the assumptions of Theorem \ref{theorem2} and if $k_n$ satisfies additionally
\begin{equation} \label{ratekn2}
\frac{1}{\lambda_{k_n}} \sum_{j=1}^{k_n} \frac{1}{a_j} = O\left(\frac{n^{1/4}}{(\log n)^{\beta}}\right) \; \text{ for some }\; \beta > 1.
\end{equation}
 we have for $n \to \infty$
\begin{equation*}
n d_2^2\left(\bar{X}_n, \bar{X}_n^* \right) \underset{p}\rightarrow 0
\end{equation*}
\end{thm}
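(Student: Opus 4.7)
The plan is to invert the FAR(1) recursion to express each sample mean as a linear functional of the driving innovations plus negligible boundary terms, and then to couple the two sets of innovations optimally. Summing $X_{t+1}=\Psi(X_t)+\epsilon_{t+1}$ from $t=0$ to $n-1$ and using that $\|\Psi\|_{\mathcal{L}}<1$ makes $I-\Psi$ boundedly invertible yields the identity
\begin{equation*}
\bar{X}_n \;=\; \frac{1}{n}(I-\Psi)^{-1}\Bigl[X_0-X_n+\sum_{t=1}^{n}\epsilon_t\Bigr].
\end{equation*}
On the high-probability event that $\|\hat{\Psi}_n\|_{\mathcal{L}}<1$, the same manipulation applied to the bootstrap recursion produces the analogous formula for $\bar{X}_n^*$ with $\Psi,X_0,X_n,\epsilon_t$ replaced throughout by $\hat{\Psi}_n,X_0^*,X_n^*,\epsilon_t^*$.

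For the coupling I would work conditionally on the sample $X_0,\ldots,X_n$ and introduce an independent shadow FAR(1) process $\tilde{X}_t$ driven by new i.i.d.\ innovations $\tilde{\epsilon}_t\sim F$, so that $\bar{\tilde{X}}_n$ has the unconditional distribution of $\bar{X}_n$. Drawing the bootstrap innovations $\epsilon_t^*\sim\hat{F}_n$ so that for each $t$ the pair $(\tilde{\epsilon}_t,\epsilon_t^*)$ realises the optimal Mallows coupling for $d_2(F,\hat{F}_n)$, independently across $t$, the two summation identities give the decomposition
\begin{equation*}
\bar{\tilde{X}}_n-\bar{X}_n^* \;=\; A_n+B_n+R_n,
\end{equation*}
with
\begin{equation*}
A_n=\frac{1}{n}(I-\Psi)^{-1}\sum_{t=1}^{n}(\tilde{\epsilon}_t-\epsilon_t^*),\qquad
B_n=\frac{1}{n}\bigl[(I-\Psi)^{-1}-(I-\hat{\Psi}_n)^{-1}\bigr]\sum_{t=1}^{n}\epsilon_t^*,
\end{equation*}
and $R_n$ collecting the endpoint contributions $n^{-1}(I-\Psi)^{-1}(\tilde{X}_0-\tilde{X}_n)$ and $-n^{-1}(I-\hat{\Psi}_n)^{-1}(X_0^*-X_n^*)$.

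I then bound each piece in $n\,\E^*\|\cdot\|^2$. Since the pairs $(\tilde{\epsilon}_t,\epsilon_t^*)$ are i.i.d.\ with mean-zero components, one has $\E^*\|A_n\|^2\le n^{-1}\|(I-\Psi)^{-1}\|_{\mathcal{L}}^2\,d_2^2(F,\hat{F}_n)$, whose $n$-fold blow-up vanishes in probability by Theorem \ref{theorem2}. For $B_n$ the resolvent identity
\begin{equation*}
(I-\Psi)^{-1}-(I-\hat{\Psi}_n)^{-1} \;=\; (I-\Psi)^{-1}(\Psi-\hat{\Psi}_n)(I-\hat{\Psi}_n)^{-1},
\end{equation*}
combined with the bootstrap variance $\E^*\|n^{-1}\sum_{t=1}^{n}\epsilon_t^*\|^2=n^{-1}\E^*\|\epsilon_1^*\|^2=O_p(n^{-1})$ and the tightness of $\|(I-\hat{\Psi}_n)^{-1}\|_{\mathcal{L}}$ on the good event, gives $n\,\E^*\|B_n\|^2=O_p(\|\hat{\Psi}_n-\Psi\|_{\mathcal{L}}^2)$. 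Stationarity of $\tilde{X}$ and a uniform-in-$t$ bootstrap second-moment bound on $X_t^*$ force $\|R_n\|^2=O_p(n^{-2})$, hence $n\|R_n\|^2=O_p(n^{-1})$.

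The technical core — and the reason for the strengthened rate condition (\ref{ratekn2}) beyond (\ref{ratekn}) — lies in proving $\|\hat{\Psi}_n-\Psi\|_{\mathcal{L}}\to 0$ in probability, together with $\|\hat{\Psi}_n\|_{\mathcal{L}}<1$ holding eventually. Decomposing $\hat{\Psi}_n-\Psi=\hat{C}_n\hat{\Gamma}_n^\dagger-\Psi$ into the stochastic perturbation of the estimated eigenprojections (controlled via standard perturbation inequalities by $\sum_{j\le k_n}a_j^{-1}$), the stochastic fluctuation of $\hat{C}_n$ (of order $\lambda_{k_n}^{-1}/\sqrt{n}$), and the principal-component truncation bias (of order $\lambda_{k_n}^{-1}$), one sees that the product $\lambda_{k_n}^{-1}\sum_{j\le k_n}a_j^{-1}$ appearing in (\ref{ratekn2}) is exactly what is needed to push the operator-norm error to $o_p(1)$. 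Verifying this operator-norm consistency under (\ref{ratekn2}) — and cleanly handling the random inverse $(I-\hat{\Psi}_n)^{-1}$ on the complementary low-probability event — is the main obstacle; once it is secured, the three bounds combine to deliver the claim $n\,d_2^2(\bar{X}_n,\bar{X}_n^*)\to 0$ in probability.
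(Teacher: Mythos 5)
Your proposal is correct, but it takes a genuinely different route from the paper. The paper never inverts $I-\Psi$; instead it iterates the recursion to get the finite moving-average representation $X_t'=\Psi^t(X_0')+\sum_{k=1}^t\Psi^{t-k}(\epsilon_k')$ (and its bootstrap analogue), splits $X_t'-X_t^*$ into three terms $a_t+b_t+c_t$ (initial-value mismatch, innovation mismatch under $\Psi$, operator mismatch acting on $\epsilon_k^*$), and then controls the full double sum $\frac{1}{n}\sum_{t,s}\E^*\langle\cdot,\cdot\rangle$ by hand using $\Vert\Psi^{t-k}\Vert_{\mathcal L}\le\hat\delta^{t-k}$ and geometric-series bookkeeping. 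Your summed-recursion identity $(I-\Psi)(n\bar X_n)=X_0-X_n+\sum_{t=1}^n\epsilon_t$ together with the resolvent identity collapses all of that bookkeeping into the single bounds $\Vert(I-\Psi)^{-1}\Vert_{\mathcal L}\le(1-\Vert\Psi\Vert_{\mathcal L})^{-1}$ and $\Vert(I-\hat\Psi_n)^{-1}\Vert_{\mathcal L}\le(1-\hat\delta)^{-1}$ on the good event, which is cleaner and shorter; the price is that this trick is specific to the sample mean and does not extend to the covariance-operator statements (Theorems \ref{bootcov} and \ref{bootacov}), for which the paper reuses exactly the same MA-decomposition machinery. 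Both arguments ultimately rest on the same two external inputs, and you have identified them correctly: $d_2(F,\hat F_n)\underset{p}\rightarrow 0$ from Theorem \ref{theorem2} for the $A_n$-type term, and the operator-norm consistency $\Vert\hat\Psi_n-\Psi\Vert_{\mathcal L}\to 0$ (with $\Vert\hat\Psi_n\Vert_{\mathcal L}\le\hat\delta<1$ eventually) for the $B_n$-type term; the paper likewise does not reprove the latter from scratch but isolates it as Lemma \ref{lemma4}, deferring to Theorem 8.7 of Bosq, and condition (\ref{ratekn2}) enters precisely there, as you surmised. One small point to make explicit if you write this up: your coupling must be i.i.d. across $t$ conditionally on the sample (as the paper's is), since the mean-zero i.i.d. structure of $(\tilde\epsilon_t-\epsilon_t^*)_t$ is what gives $\E^*\Vert\sum_t(\tilde\epsilon_t-\epsilon_t^*)\Vert^2=n\,d_2^2(F,\hat F_n)$, and the endpoint term requires the uniform bound $\sup_t\E^*\Vert X_t^*\Vert^2=O_p(1)$, which follows from the bootstrap MA representation and $\Vert\hat\Psi_n\Vert_{\mathcal L}\le\hat\delta$.
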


The following lemma provides two examples of a sufficient rate condition for $k_n$ depending on the rate of decrease of $\lambda_j - \lambda_{j+1}$, It is proven in the same manner as Lemma \ref{logratekn1}.

\begin{lemma} \label{logratekn2}
a) Let $\lambda_j - \lambda_{j+1} \ge b a^j, \ j= 1, 2, \ldots$ for some $0 < a < 1, b > 0$. Then, (\ref{ratekn}) and 
(\ref{ratekn2}) are satisfied for $n, k_n \to \infty$ if, for all large enough $n$, 
\begin{equation*}
k_n \le \left(\dfrac{1}{8\log \frac{1}{a}} - \delta \right) \log n \quad \text{ for some }\ \delta > 0.
\end{equation*}
b) Let $\lambda_j - \lambda_{j+1} \ge b j^{-a}, \ j= 1, 2, \ldots$ for some $a > 1, b > 0$. Then, (\ref{ratekn}) and 
(\ref{ratekn2}) are satisfied for $n, k_n \to \infty$ if 
\begin{equation*}
k_n = O\left( n^{\frac{1}{4(2a+1)}-\delta}\right)  \; \text{ for some }\ \delta > 0.
\end{equation*}
\end{lemma}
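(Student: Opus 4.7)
The strategy is to mirror the computation in Lemma~\ref{logratekn1}, now targeting the new quantity $\frac{1}{\lambda_{k_n}} \sum_{j=1}^{k_n} \frac{1}{a_j}$ in (\ref{ratekn2}). This turns out to be the more restrictive of the two conditions, so once it is handled, (\ref{ratekn}) follows automatically from the estimates already recorded in Lemma~\ref{logratekn1}.

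I would first collect two universal facts. Under the hypotheses of the lemma, the definition of $a_j$ gives $a_j \ge b a^j$ in case (a) and $a_j \ge b j^{-a}$ in case (b) (using $(j-1)^{-a}\ge j^{-a}$ in the latter). Moreover, since $a_{k_n} \le \lambda_{k_n} - \lambda_{k_n+1} \le \lambda_{k_n}$, one has $1/\lambda_{k_n} \le 1/a_{k_n}$; I would use this crude bound rather than the finer telescoped estimate $\lambda_{k_n} \ge \sum_{j \ge k_n}(\lambda_j - \lambda_{j+1})$, because it keeps the two cases symmetric and delivers exactly the exponents stated in the lemma.

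For part (a), a geometric-series computation yields $\sum_{j=1}^{k_n} a^{-j} = O(a^{-k_n})$, so
$$
\frac{1}{\lambda_{k_n}} \sum_{j=1}^{k_n} \frac{1}{a_j} = O\bigl(a^{-2 k_n}\bigr).
$$
Taking logarithms, the desired bound $O(n^{1/4}/(\log n)^\beta)$ amounts to $2 k_n \log(1/a) \le \tfrac14 \log n - \beta \log \log n + O(1)$, which is implied by $k_n \le \bigl(\tfrac{1}{8\log(1/a)} - \delta\bigr)\log n$ for every $\beta > 0$, and in particular for the required $\beta > 1$. Since this restriction on $k_n$ is strictly stronger than the one appearing in Lemma~\ref{logratekn1}(a), condition (\ref{ratekn}) also follows at once.

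For part (b), the same integral estimate as in Lemma~\ref{logratekn1}(b) gives $\sum_{j=1}^{k_n} j^a \le (k_n+1)^{a+1}/(a+1)$, hence
$$
\frac{1}{\lambda_{k_n}} \sum_{j=1}^{k_n} \frac{1}{a_j} = O\bigl(k_n^{2a+1}\bigr).
$$
If $k_n = O\bigl(n^{1/(4(2a+1)) - \delta}\bigr)$, then $k_n^{2a+1} = O\bigl(n^{1/4 - (2a+1)\delta}\bigr)$, which dominates any polylogarithmic factor and verifies (\ref{ratekn2}). Because $1/(4(2a+1)) < 1/(4a)$, this rate is in turn stronger than the one in Lemma~\ref{logratekn1}(b), so (\ref{ratekn}) is satisfied as well. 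No genuine obstacle is expected; the only modest choice is the use of $1/\lambda_{k_n} \le 1/a_{k_n}$, since a sharper telescoping estimate would give a slightly weaker sufficient restriction on $k_n$ in case (b) but would not match the exponent $1/(4(2a+1))$ displayed in the lemma.
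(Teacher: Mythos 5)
Your proof is correct and follows exactly the route the paper intends: the paper gives no separate argument for Lemma \ref{logratekn2} beyond stating that it "is proven in the same manner as Lemma \ref{logratekn1}", and your computation supplies precisely those details, with the bounds $a_j \ge b a^j$ (resp. $b j^{-a}$), $1/\lambda_{k_n} \le 1/a_{k_n}$, and the geometric/integral sum estimates matching the stated exponents $\tfrac{1}{8\log(1/a)}$ and $\tfrac{1}{4(2a+1)}$. Your observation that (\ref{ratekn2}) is the binding condition, so that (\ref{ratekn}) follows from the Lemma \ref{logratekn1} estimates, is also accurate.
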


\subsection{Bootstrapping the covariance operators}

In this section, we show that the bootstrap works for the covariance operator estimates $\hat{\Gamma}_n, \hat{C}_n$, too. We compare them with their bootstrap analogues
$$
\hat{\Gamma}_n^* = \frac{1}{n} \sum_{t=0}^{n-1} X_t^* \otimes X_t^* , \quad \quad \hat{C}_n^* = \frac{1}{n} \sum_{t=0}^{n-1} X_t^* \otimes X_{t+1}^*.
$$
We again consider the Mallows metric, which, for bounded linear operators $A, B: \mathcal{H} \to \mathcal{H}$, we define with respect to the operator norm $\Vert . \Vert_\mathcal{L}$:
$$
d_2^2(A,B) = \inf_{A', B'} \ \E \Vert A'-B' \Vert^2_\mathcal{L} ,
$$
where the infimum is taken over all random operators $A'$ and $B'$ with the same marginal distribution as $A$ resp. $B$.

Note that $\hat{\Gamma}_n$ is an unbiased estimate of $\Gamma$ as $\E X_t = 0$. In the bootstrap world, we have an analogous property asymptotically. More precisely, we show in Lemma \ref{lemma5} that $\E^* \hat{\Gamma}_n^* =  \hat{\Gamma}_n + O_p(\frac{1}{n})$. Therefore, we have to compare the estimation error $\hat{\Gamma}_n -\Gamma$ with $\hat{\Gamma}_n^*-\hat{\Gamma}_n$.


\begin{thm}\label{bootcov}
Under the assumptions of Theorem \ref{bootmean}, we have for $n \to \infty$
\begin{equation*}
n d_2^2\left(\hat{\Gamma}_n-\Gamma, \hat{\Gamma}_n^*-\hat{\Gamma}_n\right)\underset{p}\rightarrow 0
\end{equation*}
\end{thm}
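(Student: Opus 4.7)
The plan is to extend the coupling-based argument of Theorem \ref{bootmean} to the operator-valued quadratic statistic $\hat\Gamma_n$, combining three ingredients: the MA($\infty$) representations of both the real and the bootstrap processes; the pairwise Mallows coupling of innovations provided by Theorem \ref{theorem2}; and the Kronecker calculus (\ref{kronecker}). Since $\|\hat\Psi_n\|_\mathcal{L}<1$ with probability tending to 1 (by consistency of $\hat\Psi_n$ under our rate conditions), the bootstrap process can be replaced by its stationary extension $X_t^*=\sum_{k\ge 0}\hat\Psi_n^k(\epsilon_{t-k}^*)$, with the effect of the actual initial value $X_0^*$ decaying geometrically; likewise $X_t=\sum_{k\ge 0}\Psi^k(\epsilon_{t-k})$ in the real world. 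On an enlarged probability space, construct coupled innovations $(\tilde\epsilon_t,\epsilon_t^*)_{t\le n}$ with $\tilde\epsilon_t\sim F$ i.i.d., $\epsilon_t^*\sim \hat F_n$ i.i.d.\ conditionally on the data, pairwise optimally coupled so that $\E^*\|\tilde\epsilon_t-\epsilon_t^*\|^2 = d_2^2(F,\hat F_n) = o_p(1)$ by Theorem \ref{theorem2}. This yields the bound
\begin{equation*}
d_2^2(\hat\Gamma_n-\Gamma,\hat\Gamma_n^*-\hat\Gamma_n) \le \E^*\Bigl\|\tfrac{1}{n}\sum_t(\tilde X_t\otimes\tilde X_t-\tilde X_t^*\otimes\tilde X_t^*) - (\Gamma - \E^*\hat\Gamma_n^*)\Bigr\|_\mathcal{L}^2,
\end{equation*}
where the replacement of the stated centering $\hat\Gamma_n$ by $\E^*\hat\Gamma_n^*$ costs only $O_p(1/n)$ by Lemma \ref{lemma5} and is negligible after multiplication by $n$.

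Next, applying (\ref{kronecker}) gives $\tilde X_t\otimes\tilde X_t = \sum_{j,k\ge 0}\Psi^k(\tilde\epsilon_{t-j}\otimes\tilde\epsilon_{t-k})(\Psi^j)^T$ and analogously for the bootstrap, so one can decompose the coupled difference term by term into an \emph{innovation error} driven by
$$\tilde\epsilon_{t-j}\otimes\tilde\epsilon_{t-k}-\epsilon_{t-j}^*\otimes\epsilon_{t-k}^* = (\tilde\epsilon_{t-j}-\epsilon_{t-j}^*)\otimes\tilde\epsilon_{t-k}+\epsilon_{t-j}^*\otimes(\tilde\epsilon_{t-k}-\epsilon_{t-k}^*),$$
controlled by the pairwise coupling together with the $L^4$ moment assumption (i), and a \emph{parameter error} driven by $\Psi^k-\hat\Psi_n^k$. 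The parameter error is controlled by the telescope identity
\begin{equation*}
\Psi^k-\hat\Psi_n^k = \sum_{i=0}^{k-1}\Psi^i(\Psi-\hat\Psi_n)\hat\Psi_n^{k-1-i},
\end{equation*}
which gives $\|\Psi^k-\hat\Psi_n^k\|_\mathcal{L}\le k\rho^{k-1}\|\Psi-\hat\Psi_n\|_\mathcal{L}$ for some $\rho\in(\|\Psi\|_\mathcal{L},1)$ (eventually, in probability). The geometric factor $\rho^{\max(j,k)}$ makes the double series in $j,k$ absolutely summable, leaving only the scalar factors $d_2(F,\hat F_n)$ and $\|\Psi-\hat\Psi_n\|_\mathcal{L}$ to control.

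The principal obstacle is showing that, after Cesàro-averaging over $t$ and multiplying by $n$, the coupled expected squared operator norm is $o_p(1)$. This requires a careful second-moment analysis that juggles three small quantities simultaneously: the $O(1/n)$ variance arising from the geometric decay of autocovariances of the centered quadratic summands in $t$ (both for the real and for the bootstrap innovations), the innovation coupling cost $d_2^2(F,\hat F_n)$, and the operator estimation error $\|\Psi-\hat\Psi_n\|_\mathcal{L}$. The reinforced rate condition (\ref{ratekn2}), with its logarithmic factor $(\log n)^{-\beta}$, $\beta>1$, is engineered precisely so that $n$ times the parameter-error contribution vanishes in probability; verifying this balance in detail — especially in the mixed cross terms where innovation and parameter errors compound and where the stationary-extension approximation enters — is the hardest technical step of the proof.
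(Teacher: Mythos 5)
Your plan follows essentially the same route as the paper's proof: couple the innovations pairwise optimally using Theorem \ref{theorem2}, represent both processes as moving averages driven by the coupled innovations, expand $X_t\otimes X_t$ via (\ref{kronecker}), split the difference into an innovation error and a parameter error controlled by the telescoping identity for $\Psi^k-\hat{\Psi}_n^k$ (Lemma \ref{lemma4}c, which in fact removes your extra factor $k$ by taking $\rho=\hat{\delta}>\Vert\Psi\Vert_{\mathcal{L}}$), recenter using Lemma \ref{lemma5}, and finish with a second-moment computation in which independence of the innovation pairs annihilates most cross terms and geometric decay makes the surviving double sum over $s,t$ of order $O(n)$ rather than $O(n^2)$. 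One cosmetic difference: the paper avoids your stationary-extension step by simply setting $X_0^*=X_0'$ and working with the finite representation (\ref{finMA}), so the initial value contributes an explicit term $a_t$ handled like the others.

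The one point that needs correction is your account of the final balance. Condition (\ref{ratekn2}) is used only, via Lemma \ref{lemma4} and Theorem 8.7 of Bosq, to secure $\Vert\hat{\Psi}_n-\Psi\Vert_{\mathcal{L}}\rightarrow 0$ a.s.; the proof does \emph{not} require $n$ times the squared parameter error to vanish, and no such rate for $\hat{\Psi}_n$ is available in this ill-posed problem. What actually supplies the factor $1/n$ is the counting of surviving index pairs: in the off-diagonal innovation block only $k=i\neq l=j$ or $k=j\neq l=i$ survive, in the diagonal block only $k=l$, always with indices at most $\min(s,t)$ and weighted by $\hat{\delta}^{2(t+s-k-l)}$, so that $\sum_{s,t}\E^*\langle A_t,A_s\rangle_{\mathcal{S}}=o_p(n)$ with the small factor being merely $\Vert\Psi-\hat{\Psi}_n\Vert_{\mathcal{L}}^2+\Vert\Psi-\hat{\Psi}_n\Vert_{\mathcal{L}}\,d_2(F,\hat{F}_n)+d_2^2(F,\hat{F}_n)=o_p(1)$. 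Dividing by $n^2$ and multiplying by $n$ then gives $o_p(1)$ with consistency alone. If you instead try to make ``$n$ times the parameter-error contribution'' vanish through the rate condition, the argument will stall; the bookkeeping you defer as the hardest step is exactly this case analysis of vanishing cross-expectations, and it is where the theorem is actually won.
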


The theorem, in particular, implies that $\sqrt{n} (\hat{\Gamma}_n-\Gamma)$ and, conditional on $X_0, \ldots, X_n$, $\sqrt{n}(\hat{\Gamma}_n^*-\hat{\Gamma}_n)$ have the same asymptotic distribution by Lemma 8.3 of \citep{Bickel}.\\

For the lag-1 autocovariance operator, we have, again from Lemma \ref{lemma5}, that $\E^* \hat{C}_n^* =  \hat{C}_n \hat{\Pi}_{k_n} + O_p(\frac{1}{n})$ where $\hat{\Pi}_{k_n}$ denotes the projection onto the span of the first $k_n$ eigenvectors of $\hat{\Gamma}_n$. So, this provides the appropriate reference point in the bootstrap world if we want to approximate the distribution of the estimation error $\hat{C}_n - C$. More precisely, \\

\begin{thm}\label{bootacov}
Under the assumptions of Theorem \ref{bootmean}, we have for $n \to \infty$
\begin{equation*}
n d_2^2\left(\hat{C}_n-C, \hat{C}_n^*-\hat{C}_n \hat{\Pi}_{k_n} \right)\underset{p}\rightarrow 0
\end{equation*}
\end{thm}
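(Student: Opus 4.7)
The plan is to reduce Theorem \ref{bootacov} to Theorem \ref{bootcov} plus a ``martingale'' remainder by writing both centered autocovariances as a linear operator applied to the corresponding centered sample covariance plus a cross-product sum in the innovations, and then to control the remainder through a single coupling. Using $X_{t+1}=\Psi X_t+\epsilon_{t+1}$ together with the rule $A(y\otimes z)=y\otimes Az$, one has $\hat{C}_n=\Psi\hat\Gamma_n+M_n$ with $M_n=\frac{1}{n}\sum_{t=0}^{n-1}X_t\otimes\epsilon_{t+1}$, so $\hat C_n-C=\Psi(\hat\Gamma_n-\Gamma)+M_n$. The same manipulation on the bootstrap side, combined with the identity $\hat\Psi_n\hat\Gamma_n=\hat C_n\hat\Pi_{k_n}$ (which follows from $\hat\Gamma_n^\dagger\hat\Gamma_n=\hat\Pi_{k_n}$ and which is precisely why $\hat C_n\hat\Pi_{k_n}$ is the right centering), yields $\hat C_n^*-\hat C_n\hat\Pi_{k_n}=\hat\Psi_n(\hat\Gamma_n^*-\hat\Gamma_n)+M_n^*$ with $M_n^*=\frac{1}{n}\sum X_t^*\otimes\epsilon_{t+1}^*$.

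Next I would build a single joint coupling. Starting from the original i.i.d.\ $\epsilon_t\sim F$, pair each with $\epsilon_t^{**}\sim\hat F_n$ through the $d_2$-optimal coupling of $F$ and $\hat F_n$, so that $\E^*\Vert\epsilon_t-\epsilon_t^{**}\Vert^2=d_2^2(\hat F_n,F)=o_p(1)$ by Theorem \ref{theorem2}; then run the bootstrap recursion with $\epsilon_t^{**}$ in place of $\epsilon_t^*$ to obtain a companion trajectory $\tilde X_t^*$ (conditionally equal in law to $X_t^*$), from which $\tilde\Gamma_n^*$ and $\tilde M_n^*$ are built analogously. This gives
$$
d_2^2\bigl(\hat C_n-C,\hat C_n^*-\hat C_n\hat\Pi_{k_n}\bigr)\le 2\,\E\bigl\Vert\Psi(\hat\Gamma_n-\Gamma)-\hat\Psi_n(\tilde\Gamma_n^*-\hat\Gamma_n)\bigr\Vert_\mathcal{L}^2+2\,\E\Vert M_n-\tilde M_n^*\Vert_\mathcal{L}^2.
$$
The first summand I split further into $\Psi\cdot[(\hat\Gamma_n-\Gamma)-(\tilde\Gamma_n^*-\hat\Gamma_n)]$, controlled by Theorem \ref{bootcov} applied within the same coupling, and $(\Psi-\hat\Psi_n)(\tilde\Gamma_n^*-\hat\Gamma_n)$, controlled by the consistency $\Vert\Psi-\hat\Psi_n\Vert_\mathcal{L}=o_p(1)$ under the rate hypotheses, together with the bootstrap-world variance bound $\E\Vert\tilde\Gamma_n^*-\hat\Gamma_n\Vert_\mathcal{L}^2=O_p(1/n)$.

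For the martingale piece I pass to the Hilbert--Schmidt norm, using $\Vert\cdot\Vert_\mathcal{L}\le\Vert\cdot\Vert_\mathcal{S}$ and $\Vert y\otimes z\Vert_\mathcal{S}=\Vert y\Vert\Vert z\Vert$, and write
$$
M_n-\tilde M_n^*=\tfrac1n\sum_{t}(X_t-\tilde X_t^*)\otimes\epsilon_{t+1}+\tfrac1n\sum_{t}\tilde X_t^*\otimes(\epsilon_{t+1}-\epsilon_{t+1}^{**}).
$$
Since $\epsilon_{t+1}$ and $\epsilon_{t+1}-\epsilon_{t+1}^{**}$ are mean zero and independent of the $\sigma$-field generated by $(\epsilon_s,\epsilon_s^{**})_{s\le t}$, all cross terms in $\E\Vert\cdot\Vert_\mathcal{S}^2$ vanish, leaving $n\,\E\Vert M_n-\tilde M_n^*\Vert_\mathcal{S}^2\le C\bigl[\sup_t\E\Vert X_t-\tilde X_t^*\Vert^2+\E\Vert\epsilon_1-\epsilon_1^{**}\Vert^2\cdot\sup_t\E\Vert\tilde X_t^*\Vert^2\bigr]$. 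The main obstacle is to show $\sup_t\E\Vert X_t-\tilde X_t^*\Vert^2\to 0$. Iterating the two recursions (and letting the initial condition wash out geometrically),
$$
X_t-\tilde X_t^*=\sum_{j\ge 0}\Psi^j(\epsilon_{t-j}-\epsilon_{t-j}^{**})+\sum_{j\ge 0}(\Psi^j-\hat\Psi_n^j)\epsilon_{t-j}^{**};
$$
the first series is bounded geometrically by $(1-\Vert\Psi\Vert_\mathcal{L})^{-2}\E\Vert\epsilon_0-\epsilon_0^{**}\Vert^2=o_p(1)$, while the second is handled via $\Psi^j-\hat\Psi_n^j=\sum_{i<j}\Psi^i(\Psi-\hat\Psi_n)\hat\Psi_n^{j-1-i}$ together with the event $\Vert\hat\Psi_n\Vert_\mathcal{L}\le\rho<1$ (which holds eventually in probability), giving a contribution of order $\Vert\Psi-\hat\Psi_n\Vert_\mathcal{L}^2=o_p(1)$. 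The uniform integrability needed to upgrade the in-probability bounds on $d_2^2(\hat F_n,F)$ to bounds on unconditional $L^2$ norms follows from the fourth-moment assumption via the crude inequality $d_2^2(\hat F_n,F)\le 2\E\Vert\epsilon\Vert^2+2n^{-1}\sum\Vert\tilde\epsilon_j\Vert^2$. Collecting all pieces gives $n\,d_2^2(\hat C_n-C,\hat C_n^*-\hat C_n\hat\Pi_{k_n})\underset{p}\rightarrow 0$.
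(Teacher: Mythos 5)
Your argument is sound and reaches the conclusion by a genuinely different route than the paper. The paper's proof of Theorem \ref{bootacov} is essentially a one-paragraph remark: it sets $A_t = X_t'\otimes X_{t+1}' - \E^*\left(X_t'\otimes X_{t+1}'\right) - \left(X_t^*\otimes X_{t+1}^* - \E^*\left(X_t^*\otimes X_{t+1}^*\right)\right)$ and reruns the entire term-by-term analysis of Theorem \ref{bootcov} (the decomposition into $a_t,b_t,c_t,d_t$ and the bounds on the $\E^*\langle\cdot,\cdot\rangle_\mathcal{S}$ sums), observing that the lag merely inserts one extra factor $\Psi$ resp.\ $\hat{\Psi}_n$, which leaves all geometric bounds intact. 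You instead exploit the exact identities $\hat{C}_n=\Psi\hat{\Gamma}_n+M_n$ and $\hat{C}_n^*=\hat{\Psi}_n\hat{\Gamma}_n^*+M_n^*$, combined with $\hat{\Psi}_n\hat{\Gamma}_n=\hat{C}_n\hat{\Pi}_{k_n}$ from Lemma \ref{lemma1} (which is indeed the structural reason for the centering identified in Lemma \ref{lemma5}b), thereby reducing the theorem to Theorem \ref{bootcov} plus a martingale remainder that collapses by orthogonality. This is more modular and makes the structure transparent; the price is two auxiliary facts the paper never needs to isolate: (i) you require the coupling-specific bound $n\,\E^*\Vert(\hat{\Gamma}_n'-\Gamma)-(\hat{\Gamma}_n^*-\hat{\Gamma}_n)\Vert_\mathcal{L}^2=o_p(1)$ rather than the stated infimum in Theorem \ref{bootcov} --- fortunately the paper's proof of that theorem establishes exactly this for the coupling you use; and (ii) the bootstrap-world variance bound $\E^*\Vert\hat{\Gamma}_n^*-\hat{\Gamma}_n\Vert_\mathcal{L}^2=O_p(1/n)$, which you assert but must still prove (routine, via $\E^*\Vert\epsilon_1^*\Vert^4=\frac{1}{n}\sum_t\Vert\tilde{\epsilon}_t\Vert^4=O_p(1)$ under assumption i) of Theorem \ref{theorem2} and $\Vert\hat{\Psi}_n\Vert_\mathcal{L}\le\hat{\delta}<1$). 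Two smaller corrections: the coupling must pair a \emph{fresh} independent copy $\epsilon_t'\sim F$ with $\epsilon_t^{**}\sim\hat{F}_n$, not ``the original'' innovations --- conditionally on the data the innovations that generated the sample are no longer distributed as $F$, which is precisely why the paper introduces $(\epsilon_t',\epsilon_t^*)$ and $X_t'$ in the proof of Theorem \ref{bootmean}; and the $s=t$ cross terms between your two sums in $M_n-\tilde{M}_n^*$ do not vanish exactly, though they are $o_p(1/n)$ by Cauchy--Schwarz (or can be avoided altogether via $\Vert a+b\Vert^2\le 2\Vert a\Vert^2+2\Vert b\Vert^2$). Neither point affects the validity of the strategy.
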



\section{Appendix - Technical Lemmas and Proofs}

Throughout this section, 
$$
\Pi_p = \sum_{j=1}^p v_j\otimes v_j, \quad \quad \hat{\Pi}_{p} = \sum_{j=1}^p \hat{v}_j\otimes \hat{v}_j
$$
denote the projections onto the span of the first $p$ orthonormal eigenfunctions $v_1, \ldots, v_p$ resp. empirical eigenfunctions $\hat{v}_1, \ldots, \hat{v}_p$. As the eigenfunctions are only uniquely determined up to their sign, we have to compare later on $v_j$ with $\hat{c}_j \hat{v}_j$ where 
$$\hat{c}_j = \text{sgn}(\langle \hat{v}_j, v_j \rangle).$$
The first two auxiliary results have been essentially used already by \citep{Mas}. We defer their proofs to the supplement \ref{supp}.

\begin{lemma}\label{lemma1}
$\hat{\Pi}_{k_n}=\hat{\Gamma}_n\hat{\Gamma}_n^{\dagger}=\hat{\Gamma}_n^{\dagger}\hat{\Gamma}_n, \ \hat{\Psi}_n\hat{\Pi}_{k_n}=\hat{\Psi}_n$.
\end{lemma}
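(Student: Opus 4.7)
The plan is to reduce both identities to a single book-keeping calculation about products of rank-one operators associated with an orthonormal family. The key observation is that for two orthonormal eigenvectors $\hat{v}_i, \hat{v}_j$, composition of the rank-one operators gives
\begin{equation*}
(\hat{v}_i \otimes \hat{v}_i)(\hat{v}_j \otimes \hat{v}_j)(x) = (\hat{v}_i \otimes \hat{v}_i)\bigl(\langle \hat{v}_j, x\rangle \hat{v}_j\bigr) = \langle \hat{v}_j, x\rangle \langle \hat{v}_i, \hat{v}_j\rangle \hat{v}_i = \delta_{ij}\,(\hat{v}_j \otimes \hat{v}_j)(x),
\end{equation*}
so that $(\hat{v}_i \otimes \hat{v}_i)(\hat{v}_j \otimes \hat{v}_j) = \delta_{ij}\,\hat{v}_j \otimes \hat{v}_j$. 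This is essentially a direct consequence of the second rule in (\ref{kronecker}) together with the fact that $\hat{v}_i \otimes \hat{v}_i$ is self-adjoint.

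For the first identity, I would write $\hat{\Gamma}_n$ in its spectral form $\hat{\Gamma}_n = \sum_{i \ge 1} \hat{\lambda}_i\,\hat{v}_i \otimes \hat{v}_i$ (a finite sum since $\hat{\Gamma}_n$ has finite rank), and expand
\begin{equation*}
\hat{\Gamma}_n \hat{\Gamma}_n^{\dagger} = \Bigl(\sum_{i \ge 1} \hat{\lambda}_i\,\hat{v}_i \otimes \hat{v}_i\Bigr)\Bigl(\sum_{j=1}^{k_n} \frac{1}{\hat{\lambda}_j}\,\hat{v}_j \otimes \hat{v}_j\Bigr) = \sum_{j=1}^{k_n} \hat{v}_j \otimes \hat{v}_j = \hat{\Pi}_{k_n},
\end{equation*}
where only diagonal terms survive by the orthogonality calculation above. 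Carrying out the product in the opposite order gives exactly the same thing, establishing $\hat{\Gamma}_n^{\dagger} \hat{\Gamma}_n = \hat{\Pi}_{k_n}$ as well.

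For the second identity, I would apply the same orthogonality relation to compute
\begin{equation*}
\hat{\Gamma}_n^{\dagger} \hat{\Pi}_{k_n} = \Bigl(\sum_{i=1}^{k_n} \frac{1}{\hat{\lambda}_i}\,\hat{v}_i \otimes \hat{v}_i\Bigr)\Bigl(\sum_{j=1}^{k_n} \hat{v}_j \otimes \hat{v}_j\Bigr) = \sum_{j=1}^{k_n} \frac{1}{\hat{\lambda}_j}\,\hat{v}_j \otimes \hat{v}_j = \hat{\Gamma}_n^{\dagger},
\end{equation*}
and then conclude $\hat{\Psi}_n \hat{\Pi}_{k_n} = \hat{C}_n \hat{\Gamma}_n^{\dagger} \hat{\Pi}_{k_n} = \hat{C}_n \hat{\Gamma}_n^{\dagger} = \hat{\Psi}_n$.

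There is no genuine obstacle here: the lemma is a purely algebraic statement about the truncated spectral inverse, and everything follows once one recognises that $\hat{\Gamma}_n^{\dagger}$ and $\hat{\Pi}_{k_n}$ are polynomials in the commuting family of spectral projections $\{\hat{v}_j \otimes \hat{v}_j\}_{j \ge 1}$. The only point deserving care is to note that the spectral sum for $\hat{\Gamma}_n$ need not be truncated at $k_n$, but the mismatch does no damage because the terms with $j > k_n$ are annihilated by $\hat{\Gamma}_n^{\dagger}$ and by $\hat{\Pi}_{k_n}$ through the $\delta_{ij}$.
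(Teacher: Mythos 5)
Your proposal is correct and follows essentially the same route as the paper: both arguments reduce to the eigen-relations $\hat{\Gamma}_n(\hat{v}_k)=\hat{\lambda}_k\hat{v}_k$, $\hat{\Gamma}_n^{\dagger}(\hat{v}_k)=\hat{\lambda}_k^{-1}\hat{v}_k$ and the orthonormality of the $\hat{v}_k$; the paper just evaluates the operators at an arbitrary $x$ instead of multiplying out spectral projections. The only cosmetic difference is that you invoke the full spectral decomposition of $\hat{\Gamma}_n$, which is available since it is self-adjoint and of finite rank, whereas the paper only needs the action on the first $k_n$ eigenvectors.
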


\begin{lemma} \label{lemma2}
$\hat{\Psi}_n-\Psi\hat{\Pi}_p=\dfrac{1}{n}S_n\hat{\Gamma}_n^{\dagger}$ with $S_n=\sum_{t=1}^n X_{t-1} \otimes \epsilon_t = n\left(\hat{C}_n-\Psi\hat{\Gamma}_n\right)$.
\end{lemma}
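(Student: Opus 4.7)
The lemma is a straightforward algebraic identity built from (a) the FAR(1) recursion, (b) the Kronecker-product rules in (\ref{kronecker}), and (c) Lemma \ref{lemma1}. The plan is to first verify the second equality $S_n = n(\hat C_n-\Psi\hat\Gamma_n)$, and then obtain the main identity by right-multiplying with $\hat\Gamma_n^\dagger$.

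First I would unfold $n\hat C_n = \sum_{t=1}^n X_{t-1}\otimes X_t$ (re-indexing the definition of $\hat C_n$) and substitute $X_t = \Psi(X_{t-1}) + \epsilon_t$ into the second slot of each Kronecker product. This gives
$$
n\hat C_n \;=\; \sum_{t=1}^n X_{t-1}\otimes \Psi(X_{t-1}) \;+\; \sum_{t=1}^n X_{t-1}\otimes \epsilon_t.
$$
The second sum is exactly $S_n$. For the first sum I would apply the second identity in (\ref{kronecker}) with $A=\mathrm{Id}$, $B=\Psi$, and $y=z=X_{t-1}$, yielding $X_{t-1}\otimes \Psi(X_{t-1}) = \Psi\,(X_{t-1}\otimes X_{t-1})$. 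Pulling the bounded linear operator $\Psi$ out of the (finite) sum and using $\sum_{t=1}^n X_{t-1}\otimes X_{t-1} = \sum_{s=0}^{n-1} X_s\otimes X_s = n\hat\Gamma_n$ gives $n\hat C_n = \Psi\, n\hat\Gamma_n + S_n$, i.e.\ $S_n = n(\hat C_n - \Psi\hat\Gamma_n)$.

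Next I would establish the main identity. Starting from the definition $\hat\Psi_n = \hat C_n\hat\Gamma_n^\dagger$ and substituting $\hat C_n = \Psi\hat\Gamma_n + \tfrac{1}{n}S_n$ from the step above,
$$
\hat\Psi_n \;=\; \bigl(\Psi\hat\Gamma_n + \tfrac{1}{n}S_n\bigr)\hat\Gamma_n^\dagger \;=\; \Psi\bigl(\hat\Gamma_n\hat\Gamma_n^\dagger\bigr) + \tfrac{1}{n}S_n\hat\Gamma_n^\dagger.
$$
Invoking Lemma \ref{lemma1} to replace $\hat\Gamma_n\hat\Gamma_n^\dagger$ by $\hat\Pi_{k_n}$ (matching the $\hat\Pi_p$ notation in the statement) and rearranging produces $\hat\Psi_n - \Psi\hat\Pi_p = \tfrac{1}{n}S_n\hat\Gamma_n^\dagger$, as claimed.

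There is essentially no analytic obstacle here; the only place where one has to be careful is the application of the Kronecker rule in (\ref{kronecker}), since it is easy to apply it with the factors in the wrong slots and pick up $\Psi^T$ instead of $\Psi$. Writing the rule out explicitly with $A=\mathrm{Id}$ and $B=\Psi$ on the $z$-slot removes any ambiguity. Everything else is linearity of $\Psi$ and $(\cdot)\hat\Gamma_n^\dagger$ together with the re-indexing of the $\hat C_n$ and $\hat\Gamma_n$ sums.
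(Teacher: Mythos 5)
Your proposal is correct and follows essentially the same route as the paper: both arguments combine $\hat{\Gamma}_n\hat{\Gamma}_n^{\dagger}=\hat{\Pi}_{k_n}$ from Lemma \ref{lemma1} with the identity $n(\hat{C}_n-\Psi\hat{\Gamma}_n)=\sum_{t=1}^n X_{t-1}\otimes\epsilon_t$ obtained from the FAR(1) recursion. The only cosmetic difference is that the paper verifies the latter pointwise on a test element $x$ via $(y\otimes z)(x)=\langle y,x\rangle z$, whereas you invoke the operator-level Kronecker rule (\ref{kronecker}); these are the same computation.
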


Next we state that the well-known strong consistency of $\hat{\Psi}_n$ as an estimate of $\Psi$ in particular holds under our set of assumptions, and we collect some immediate consequences for reference.

\begin{lemma}\label{lemma4}
Let $||\Psi||_{\mathcal{L}} < \hat{\delta} < 1$. Under the conditions of Theorem \ref{bootmean}, we have 

a) $\left\Vert\hat{\Psi}_n-\Psi\right\Vert_{\mathcal{L}}\underset{a.s.}\rightarrow 0 \quad$ for $n \to \infty$.

b) $\left\Vert\hat{\Psi}_n\right\Vert_{\mathcal{L}} \le \hat{\delta}\quad $ for all large enough $n$,

c) $\left\Vert \hat{\Psi}_n^k - \Psi^k\right\Vert_{\mathcal{L}} = \hat{\delta}^k \left\Vert\Psi-\hat{\Psi}_n\right\Vert_{\mathcal{L}} O_p(1)$.
\end{lemma}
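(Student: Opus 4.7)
The plan is to prove the three parts sequentially, with (a) carrying the main analytic content and (b), (c) following by elementary manipulations.

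For part (a), I would use the decomposition
$$
\hat{\Psi}_n - \Psi = \bigl(\hat{\Psi}_n - \Psi \hat{\Pi}_{k_n}\bigr) + \Psi\bigl(\hat{\Pi}_{k_n} - I\bigr).
$$
The first summand is rewritten by Lemma \ref{lemma2} as $\tfrac{1}{n} S_n \hat{\Gamma}_n^{\dagger}$ with $S_n = \sum_{t=1}^n X_{t-1}\otimes \epsilon_t$. Submultiplicativity of $\Vert\cdot\Vert_\mathcal{L}$, together with $\Vert\hat{\Gamma}_n^\dagger\Vert_\mathcal{L} = 1/\hat{\lambda}_{k_n}$, yields
$$
\bigl\Vert \tfrac{1}{n} S_n \hat{\Gamma}_n^\dagger \bigr\Vert_\mathcal{L} \le \tfrac{1}{n}\Vert S_n\Vert_\mathcal{S}\cdot\tfrac{1}{\hat{\lambda}_{k_n}}.
$$
The summands of $S_n$ form a Hilbert--Schmidt-valued martingale difference sequence (since $\epsilon_t$ is independent of $X_{t-1}$ with $\E\epsilon_t=0$) with $\E\Vert S_n\Vert_\mathcal{S}^2 = n\,\E\Vert X_0\Vert^2\,\E\Vert\epsilon_0\Vert^2 = O(n)$, so $\tfrac{1}{n}\Vert S_n\Vert_\mathcal{S} = O_p(n^{-1/2})$. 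Combined with $\hat{\lambda}_{k_n}^{-1} = \lambda_{k_n}^{-1}(1 + o_p(1))$ and the bound $\lambda_{k_n}^{-1} = O(n^{1/4}/(\log n)^\beta)$ from (\ref{ratekn}), this first summand tends to zero.

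For the second summand, I would split
$$
\Psi\bigl(\hat{\Pi}_{k_n} - I\bigr) = \Psi\bigl(\hat{\Pi}_{k_n} - \Pi_{k_n}\bigr) - \Psi\bigl(I - \Pi_{k_n}\bigr).
$$
The operator norm of $\Psi(I-\Pi_{k_n})$ vanishes as $k_n\to\infty$ since $\Psi$ is Hilbert--Schmidt, hence compact. For $\Psi(\hat{\Pi}_{k_n}-\Pi_{k_n})$, the standard spectral perturbation inequality (compare \citep{Horvath}) gives $\Vert\hat{v}_j-\hat{c}_j v_j\Vert \le 2\sqrt{2}\,\Vert\hat{\Gamma}_n - \Gamma\Vert_\mathcal{L}/a_j$, and summing these contributions for the rank-one projections produces
$$
\Vert\hat{\Pi}_{k_n}-\Pi_{k_n}\Vert_\mathcal{L} \le C\,\Vert\hat{\Gamma}_n-\Gamma\Vert_\mathcal{L}\sum_{j=1}^{k_n} a_j^{-1} = O_p\Bigl(n^{-1/2}\sum_{j=1}^{k_n} a_j^{-1}\Bigr).
$$
By Cauchy--Schwarz, $n^{-1}\bigl(\sum_{j=1}^{k_n} a_j^{-1}\bigr)^2 \le \tfrac{k_n}{n}\sum_{j=1}^{k_n} a_j^{-2} \to 0$ by (\ref{ratekn}). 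This is essentially the line of argument from \citep{Mas}. The almost-sure strengthening uses the ergodic theorem for $\hat{\Gamma}_n \to \Gamma$ together with a strong LLN (available under the fourth-moment assumption) applied to $S_n/n$.

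Part (b) is immediate from (a): $\Vert\hat{\Psi}_n\Vert_\mathcal{L} \le \Vert\Psi\Vert_\mathcal{L}+\Vert\hat{\Psi}_n-\Psi\Vert_\mathcal{L}$, and the right-hand side is eventually below $\hat{\delta}$ since $\Vert\Psi\Vert_\mathcal{L} < \hat{\delta}$ strictly. Part (c) follows from the telescoping identity
$$
\hat{\Psi}_n^k - \Psi^k = \sum_{j=0}^{k-1} \hat{\Psi}_n^{\,j}\bigl(\hat{\Psi}_n-\Psi\bigr)\Psi^{k-1-j}.
$$
By (b) and $\Vert\Psi\Vert_\mathcal{L} < \hat{\delta}$, each summand has operator norm at most $\hat{\delta}^{k-1}\Vert\hat{\Psi}_n-\Psi\Vert_\mathcal{L}$, so the full sum is at most $k\hat{\delta}^{k-1}\Vert\hat{\Psi}_n-\Psi\Vert_\mathcal{L} = \hat{\delta}^k(k/\hat{\delta})\Vert\hat{\Psi}_n-\Psi\Vert_\mathcal{L}$; for fixed $k$ the prefactor $k/\hat{\delta}$ is a deterministic constant absorbed into $O_p(1)$, delivering the stated bound.

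The main obstacle is the control of $\Psi(\hat{\Pi}_{k_n}-\Pi_{k_n})$ in part (a): uniformity of the eigenvector perturbation $\hat{v}_j \mapsto v_j$ across $j = 1,\ldots,k_n$ is dictated by the spectral gaps $a_j$, and matching $\Vert\hat{\Gamma}_n-\Gamma\Vert_\mathcal{L} = O_p(n^{-1/2})$ against $\sum_{j\le k_n} a_j^{-1}$ is precisely why the summability condition $\tfrac{k_n}{n}\sum_{j\le k_n} a_j^{-2}\to 0$ in (\ref{ratekn}) was built in. The remaining steps are routine applications of the ergodic theorem and standard martingale rate estimates.
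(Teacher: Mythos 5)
Your overall architecture for part (a) — splitting $\hat{\Psi}_n-\Psi$ into $(\hat{\Psi}_n-\Psi\hat{\Pi}_{k_n})+\Psi(\hat{\Pi}_{k_n}-\Pi_{k_n})-\Psi(I-\Pi_{k_n})$, handling the first piece via Lemma \ref{lemma2} and $\Vert\hat{\Gamma}_n^\dagger\Vert_{\mathcal{L}}=1/\hat{\lambda}_{k_n}$, and the middle piece via the spectral-gap perturbation bound — is a legitimate and more self-contained alternative to the paper, which instead writes $\hat{\Psi}_n-\Psi$ as $(\hat{\Psi}_n-\Psi\Pi_{k_n})+\Psi(\Pi_{k_n}-\hat{\Pi}_{k_n})+(\Psi\hat{\Pi}_{k_n}-\Psi)$ and imports the almost-sure convergence of each piece from the displays (8.84)--(8.93) in the proof of Theorem 8.7 of \citep{Bosq}. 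The genuine gap is the almost-sure part, which is the actual content of the claim. Every explicit bound you give is an in-probability rate: $\tfrac1n\Vert S_n\Vert_{\mathcal{S}}=O_p(n^{-1/2})$ from the second-moment computation, and $\Vert\hat{\Gamma}_n-\Gamma\Vert_{\mathcal{L}}=O_p(n^{-1/2})$. These are multiplied by the \emph{diverging} factors $1/\hat{\lambda}_{k_n}$ and $\sum_{j\le k_n}a_j^{-1}$, so the final sentence — ``the almost-sure strengthening uses the ergodic theorem \ldots together with a strong LLN applied to $S_n/n$'' — does not close the argument: a plain SLLN only gives $\Vert S_n\Vert/n\to 0$ and $\Vert\hat{\Gamma}_n-\Gamma\Vert_{\mathcal{L}}\to 0$ a.s.\ with no rate, which is useless against a factor tending to infinity. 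What is actually needed is an almost-sure rate of the form $\tfrac{n^{1/4}}{(\log n)^{\beta}}\Vert\hat{\Gamma}_n-\Gamma\Vert_{\mathcal{L}}\to 0$ a.s.\ (Theorem 4.1 of \citep{Bosq}, invoked in the paper's proof of Theorem \ref{theorem2}) and its analogue for $\hat{C}_n$, matched against the second half of (\ref{ratekn}) and against (\ref{ratekn2}); supplying exactly these a.s.\ rates is the point of the Bosq machinery the paper defers to. As written, your argument proves $\Vert\hat{\Psi}_n-\Psi\Vert_{\mathcal{L}}\to 0$ in probability, not a.s.

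A smaller issue concerns part (c). Bounding each telescoping summand by $\hat{\delta}^{k-1}\Vert\hat{\Psi}_n-\Psi\Vert_{\mathcal{L}}$ and summing gives the prefactor $k/\hat{\delta}$, and you explicitly restrict to fixed $k$. The paper instead keeps $\Vert\Psi\Vert_{\mathcal{L}}^{j}\hat{\delta}^{k-1-j}$ and sums the geometric series $\sum_{j=0}^{k-1}(\Vert\Psi\Vert_{\mathcal{L}}/\hat{\delta})^{j}\le(1-\Vert\Psi\Vert_{\mathcal{L}}/\hat{\delta})^{-1}$, exploiting the \emph{strict} inequality $\Vert\Psi\Vert_{\mathcal{L}}<\hat{\delta}$ to obtain an $O_p(1)$ that is uniform in $k$. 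This uniformity is the form actually used in the proofs of Theorems \ref{bootcov} and \ref{bootacov}, where the lemma is applied with exponents up to $n$ and then summed over geometric series; your linear-in-$k$ constant would still be summable there, but it is a strictly weaker statement than the one claimed. Parts (b) and the telescoping identity itself coincide with the paper's argument.
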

\begin{proof}
a) The result is a slight modification of Theorem 8.7 of \citep{Bosq}, and the proof is defered to the supplement \ref{supp}.

b) From a) we immediately have $
\Vert\hat{\Psi}_n\Vert_{\mathcal{L}} \leq \Vert\Psi\Vert_{\mathcal{L}} +\Vert\hat{\Psi}_n-\Psi\Vert_{\mathcal{L}}  \leq \hat{\delta}$ for large enough $n$.

c) First, we note that
\begin{equation*}
\hat{\Psi}_n^k - \Psi^k = \left(\hat{\Psi}_n-\Psi\right)\sum_{j=0}^{k-1}\Psi^j\hat{\Psi}_n^{k-1-j}
\end{equation*}
The assertion follows from, using b) and  $||\Psi||_{\mathcal{L}} < \hat{\delta}$,
\begin{eqnarray*}
\left\Vert\sum_{j=0}^{k-1}\Psi^j\hat{\Psi}_n^{k-1-j}\right\Vert_{\mathcal{L}} &\leq & \sum_{j=0}^{k-1}\left\Vert\Psi^j\right\Vert_{\mathcal{L}} \left\Vert\hat{\Psi}_n^{k-1-j}\right\Vert _{\mathcal{L}} \leq \sum_{j=0}^{k-1}\left\Vert\Psi\right\Vert^j_{\mathcal{L}}\left\Vert\hat{\Psi}_n\right\Vert^{k-1-j}_{\mathcal{L}} \\
&\leq & \hat{\delta}^{k-1}\sum_{j=0}^{k-1}\left(\dfrac{\left\Vert\Psi\right\Vert_{\mathcal{L}}}{\hat{\delta}}\right)^j \leq \dfrac{\hat{\delta}^{k-1}}{1-\dfrac{\left\Vert\Psi\right\Vert_{\mathcal{L}}}{\hat{\delta}}}=\dfrac{\hat{\delta}^k}{\hat{\delta}-\left\Vert\Psi\right\Vert_{\mathcal{L}}}
\end{eqnarray*}
for all large enough $n$.
\end{proof}

\begin{proof} ({\bf Theorem \ref{theorem2}})\\
Let $F_n$ denote the empirical distribution of $\epsilon_1,\ldots,\epsilon_n$. Then, from Lemma 8.4 of \citep{Bickel}, we have $d_2\left(F_n,F\right) \rightarrow 0$ a.s. Hence it suffices to show that $d_2\left(F_n,\hat{F}_n\right)\underset{p}\rightarrow 0$. Let
$$
U_0=\epsilon_J, \ V_0=\tilde{\epsilon}_J=\hat{\epsilon}_J-\dfrac{1}{n}\sum_{j=1}^n\hat{\epsilon}_j, \\
$$
where $J$ is Laplace distributed on $\{1,\ldots,n\}$, i.e. $pr(J=t)=\dfrac{1}{n}, \ 1\leq t\leq n$. The random variables $U_0, V_0$ have marginal distributions $F_n$ respectively $\hat{F}_n$. As in the proof of Theorem 3.1 of \citep{Franke}, we have from the definition of the Mallows metric
$$
d_2^2\left(F_n,\hat{F}_n\right) \leq  \dfrac{1}{n}\sum_{k=1}^n\left\Vert \epsilon_k - \hat{\epsilon}_k + \dfrac{1}{n}\sum_{j=1}^n\hat{\epsilon}_j \right\Vert^2
\leq  \dfrac{6}{n}\sum_{k=1}^n\left\Vert \hat{\epsilon}_k-\epsilon_k \right\Vert^2 + \dfrac{3}{n^2}\left\Vert \sum_{j=1}^n\epsilon_j \right\Vert^2
$$
From the law of large numbers for i.i.d. random variables we have 
\begin{equation*}
\dfrac{1}{n}\sum_{j=1}^n\epsilon_j \underset{p}\rightarrow \E\epsilon_j=0, \ n\to \infty
\end{equation*}
such that the second term on the right-hand side vanishes for $n\to \infty$. For the first term, we show in the following parts a)-c) of the proof
\begin{equation*}
\left\Vert \hat{\epsilon}_t-\epsilon_t \right\Vert^2 \leq \left\Vert X_{t-1} \right\Vert^2R_n +3\left\Vert \Pi_{k_n}\left(X_{t-1}\right)-X_{t-1} \right\Vert^2
\end{equation*}
where $R_n$ does not depend on $t$, and $R_n\underset{p}\rightarrow 0$. Hence, for $n \to \infty$,
$$
\dfrac{1}{n}\sum_{t=1}^n\left\Vert \hat{\epsilon}_t-\epsilon_t \right\Vert^2 \leq \dfrac{1}{n}\sum_{t=1}^n\left\Vert X_{t-1} \right\Vert^2R_n + 3 \dfrac{1}{n}\sum_{t=1}^n\left\Vert \Pi_{k_n}\left(X_{t-1}\right)-X_{t-1} \right\Vert^2 \underset{p}\rightarrow 0,
$$
as, by Corollary 6.2 of \citep{Bosq}, $\dfrac{1}{n}\sum_{t=1}^n\left\Vert X_{t-1} \right\Vert^2 \underset{p}\rightarrow \E\left\Vert X_1 \right\Vert^2 < \infty$, and, by stationarity of $\{X_t\}$
\begin{equation*}
\E\left(\dfrac{1}{n}\sum_{t=1}^n\left\Vert \Pi_{k_n}\left(X_{t-1}\right)- X_{t-1} \right\Vert^2 \right)=\E\sum_{j={k_n}+1}^\infty \left\langle X_1,v_j \right\rangle^2 \to 0
\end{equation*}
for $k_n \to \infty$, using a monotone convergence argument and $\E\sum_{j=1}^\infty \left\langle X_1,v_j \right\rangle^2 = \E\left\Vert X_1 \right\Vert^2 < \infty$.

\textbf{a}) By definition of $\epsilon_t, \ \hat{\epsilon}_t$, we have
\begin{eqnarray*}
\left\Vert \epsilon_t-\hat{\epsilon}_t \right\Vert^2 &=& \left\Vert X_t-\Psi\left(X_{t-1}\right)-X_t+\hat{\Psi}_n\left(X_{t-1}\right)\right\Vert^2 \\
&=& \left\Vert \left(\hat{\Psi}_n-\Psi\right)\left(X_{t-1}\right) \right\Vert^2 \\
&=& \left\Vert \left(\hat{\Psi}_n-\Psi\hat{\Pi}_{k_n}\right)\left(X_{t-1}\right) + \Psi\left(\hat{\Pi}_{k_n}-\Pi_{k_n}\right)\left(X_{t-1}\right) + \Psi\left( \Pi_{k_n}\left(X_{t-1}\right)-X_{t-1}\right)\right\Vert^2 \\
&\leq & 3\left\Vert \left(\hat{\Psi}_n-\Psi\hat{\Pi}_{k_n}\right)\left(X_{t-1}\right)\right\Vert^2 + 3\left\Vert \left(\hat{\Pi}_{k_n}-\Pi_{k_n}\right)\left(X_{t-1}\right)\right\Vert^2 + 3\left\Vert \Pi_{k_n}\left(X_{t-1}\right)-X_{t-1}\right\Vert^2
\end{eqnarray*}
using $\left\Vert \Psi \right\Vert_{\mathcal{L}} <1$. We now show that the first and the second terms are bounded in the required manner.

\textbf{b}) We split $\left(\hat{\Pi}_{k_n}-\Pi_{k_n}\right)\left(X_{t-1}\right)$ into two terms
\begin{eqnarray*}
\left(\hat{\Pi}_{k_n}-\Pi_{k_n}\right)\left(X_{t-1}\right) &=& \sum_{j=1}^{k_n}\left\langle X_{t-1},\hat{v}_j\right\rangle \hat{v}_j - \sum_{j=1}^{k_n}\left\langle X_{t-1},v_j\right\rangle v_j \\
&=& \sum_{j=1}^{k_n}\left\langle X_{t-1},\hat{c}_j\hat{v}_j\right\rangle \left(\hat{c}_j\hat{v}_j-v_j\right) + \sum_{j=1}^{k_n}\left\langle X_{t-1},\hat{c}_j\hat{v}_j-v_j \right\rangle v_j.
\end{eqnarray*}
As $v_1,v_2,\ldots$ are orthonormal, we have for the second term
\begin{eqnarray*}
\left\Vert \sum_{j=1}^{k_n}\left\langle X_{t-1},\hat{c}_j\hat{v}_j-v_j \right\rangle v_j \right\Vert^2 &=& \sum_{j=1}^{k_n}\left\langle X_{t-1},\hat{c}_j\hat{v}_j-v_j \right\rangle^2 \\
&\leq & \left\Vert X_{t-1}\right\Vert^2\sum_{j=1}^{k_n}\left\Vert \hat{c}_j\hat{v}_j-v_j \right\Vert^2
\end{eqnarray*}
where the right hand side converges to 0 in probability, as, from the remarks after Theorem 16.1 of \citep{Horvath} and (\ref{ratekn})
\begin{equation*}
\E\sum_{j=1}^{k_n}\left\Vert \hat{c}_j\hat{v}_j-v_j \right\Vert^2 = \dfrac{1}{n}\sum_{j=1}^{k_n}\dfrac{1}{a_j^2}\, O(1) \to 0 \quad \quad \text{for } n\to \infty.
\end{equation*}

For the first term, we have, as $\left\Vert \hat{c}_j\hat{v}_j\right\Vert=1$,
\begin{eqnarray*}
\left\Vert \sum_{j=1}^{k_n}\left\langle X_{t-1},\hat{c}_j\hat{v}_j\right\rangle \left(\hat{c}_j\hat{v}_j-v_j\right)\right\Vert^2 &\leq &k_n\sum_{j=1}^{k_n}\left\langle X_{t-1},\hat{c}_j\hat{v}_j\right\rangle^2\left\Vert \hat{c}_j\hat{v}_j-v_j\right\Vert^2 \\
&\leq & \left\Vert X_{t-1}\right\Vert^2 k_n\sum_{j=1}^{k_n}\left\Vert \hat{c}_j\hat{v}_j-v_j\right\Vert^2
\end{eqnarray*}
where again the right hand side converges to 0 in probability as, from above,
\begin{equation*}
\E k_n\sum_{j=1}^n\left\Vert \hat{c}_j\hat{v}_j-v_j\right\Vert^2 = \dfrac{k_n}{n}\sum_{j=1}^{k_n}\dfrac{1}{a_j^2}\, O(1) \to 0 \quad \quad \text{for } n\to \infty.
\end{equation*}

\textbf{c}) Using Lemma \ref{lemma2}, we have
\begin{eqnarray*}
\left\Vert\left(\hat{\Psi}_n-\Psi\hat{\Pi}_{k_n}\right)\left(X_{t-1}\right)\right\Vert^2 &=& \left\Vert\dfrac{1}{n}S_n\hat{\Gamma}_n^{\dagger}\left(X_{t-1}\right)\right\Vert^2 \\
&\leq & \left\Vert\dfrac{1}{n}S_n\right\Vert_{\mathcal{L}}^2\left\Vert \hat{\Gamma}_n^{\dagger}\left(X_{t-1}\right)\right\Vert^2 \\
&=& \left\Vert\dfrac{1}{n}S_n\right\Vert_{\mathcal{L}}^2\left\Vert \sum_{j=1}^{k_n}\dfrac{1}{\hat{\lambda}_j} \langle X_{t-1}, \hat{v}_j\rangle \ \hat{v}_j\right\Vert^2 \\
&\leq & \left\Vert\dfrac{1}{n}S_n\right\Vert_{\mathcal{L}}^2 \left\Vert X_{t-1}\right\Vert^2\sum_{j=1}^{k_n}\dfrac{1}{\hat{\lambda}_j^2}
\end{eqnarray*}
using the Cauchy-Schwarz inequality. Moreover, as $C=\Psi\Gamma$ and $\left\Vert\Psi\right\Vert_{\mathcal{L}} \leq 1$, 
\begin{eqnarray*}
\left\Vert\dfrac{1}{n}S_n\right\Vert_{\mathcal{L}}^2 &=& \left\Vert\hat{C}_n-\Psi\hat{\Gamma}_n\right\Vert^2 \leq 2\left\Vert\hat{C}_n-C\right\Vert_{\mathcal{L}}^2 + 2\left\Vert\Psi\left(\Gamma-\hat{\Gamma}_n\right)\right\Vert_{\mathcal{L}}^2 \\
&\leq & 2\left\Vert\hat{C}_n-C\right\Vert_{\mathcal{L}}^2 +2\left\Vert\hat{\Gamma}_n-\Gamma\right\Vert_{\mathcal{L}}^2 = O_p\left(\dfrac{1}{n}\right),
\end{eqnarray*}
as, from the remarks after Theorem 16.1 of \citep{Horvath}, we have $\E\left\Vert\hat{\Gamma}_n-\Gamma\right\Vert_{\mathcal{L}}^2=O\left(\dfrac{1}{n}\right)$, and from Theorem 3 of \citep{Pumo}, analogously $\E\left\Vert\hat{C}_n-C\right\Vert_{\mathcal{L}}^2=O\left(\dfrac{1}{n}\right)$.\\

As $\left\Vert \cdot \right\Vert_{\mathcal{L}} \leq \left\Vert \cdot \right\Vert_{\mathcal{S}}$, we get from (\ref{ratekn}) and Theorem 4.1 of \citep{Bosq} with $D$ denoting some suitable constant
\begin{equation*}
\dfrac{1}{\lambda_{k_n}}\left\Vert \hat{\Gamma}_n-\Gamma \right\Vert_{\mathcal{L}} \leq D\ \frac{n^{1/4}}{(\log n)^{\beta}}\ \left\Vert \hat{\Gamma}_n-\Gamma \right\Vert_{\mathcal{L}} \underset{a.s.}\rightarrow 0 \ \text{for} \ n \to \infty.
\end{equation*}
Therefore, we have for all large enough $n$, $\left\Vert \hat{\Gamma}_n-\Gamma \right\Vert_{\mathcal{L}} \leq \dfrac{1}{2}\lambda_{k_n} \ a.s.$
and, as in the proof of Theorem 8.7 of \citep{Bosq},
\begin{equation} \label{lowlambdahat}
\hat{\lambda}_{k_n} \geq \lambda_{k_n}-\left\Vert \hat{\Gamma}_n-\Gamma \right\Vert_{\mathcal{L}} \geq \dfrac{1}{2}\lambda_{k_n} \ a.s.
\end{equation}
using $\sup_{j\geq 1}\left\vert\hat{\lambda}_j-\lambda_j\right\vert \leq \left\Vert \hat{\Gamma}_n-\Gamma \right\Vert_{\mathcal{L}}$. Therefore,  for large enough $n$, using (\ref{ratekn}) again,
$$
\left\Vert\dfrac{1}{n}S_n\right\Vert_{\mathcal{L}}^2\sum_{j=1}^{k_n}\dfrac{1}{\hat{\lambda}_j^2} \leq  4\left\Vert\dfrac{1}{n}S_n\right\Vert_{\mathcal{L}}^2\sum_{j=1}^{k_n}\dfrac{1}{\lambda_j^2} \le 4\left\Vert\dfrac{1}{n}S_n\right\Vert_{\mathcal{L}}^2\sum_{j=1}^{k_n}\dfrac{1}{a_j^2} = o_p\left(\dfrac{1}{n}\right).
$$
\end{proof}

\begin{proof} ({\bf Theorem \ref{bootmean}})\\
As in the proof of Theorem 4.1 of \citep{Franke}, we choose a particular realization of innovation pairs $\left(\epsilon_t',\epsilon_t^*\right)$ such that \\
i) $\left(\epsilon_t',\epsilon_t^*\right)$ i.i.d. conditional on $X_0, \ldots, X_n$,\\
ii) the marginal distributions of $\epsilon_t'$ and $\epsilon_t^*$ are $F$ resp. $\hat{F}_n$,\\
iii) $\E^*\left\Vert\epsilon_t'-\epsilon_t^*\right\Vert^2=d_2^2\left(F,\hat{F}_n\right)$. \\
The latter can be achieved by Lemma 8.1. of \citep{Bickel}. 
Moreover, we choose $X_0'$ distributed as, but independent of $X_0$ and of $\left(\epsilon_t',\epsilon_t^*\right), \ t\geq 1$. Finally, we choose $X_0^*=X_0'$, and we set 
\begin{equation*}
X_t'=\Psi\left(X_{t-1}'\right)+\epsilon_t', \quad X_t^*=\hat{\Psi}_n\left(X_{t-1}^*\right)+\epsilon_t^*, \; t \geq 1.
\end{equation*}
$X_0', \ldots, X_n'$ is a independent realization of the data $X_0, \ldots, X_n$, and $X_0^*, \ldots, X_n^*$ is a realization of the bootstrap data of section \ref{secboot}. If we iterate the recursions, we get a representation of $X_t', X_t^*$ in terms of $X_0'$ and the innovations:
\begin{equation} \label{finMA}
X_t'=\Psi^t\left(X_0'\right)+\sum_{k=1}^{t} \Psi^{t-k}(\epsilon_k'), \quad X_t^*=\hat{\Psi}_n^t\left(X_0'\right)+\sum_{k=1}^{t} \hat{\Psi}_n^{t-k}(\epsilon_k^*), \; t \geq 1
\end{equation}
\textbf{a)} As $\E X'_0 = \E X_0 = 0$ and, by definition, $\E^* \epsilon_t^* =0$, we get, using linearity of the autoregressive operator,
$$
\E^* X_t^* = \E^* \bigg(\hat{\Psi}_n^t(X'_0) + \sum_{k=1}^t \hat{\Psi}_n^{t-k}(\epsilon_k^*) \bigg) = 0
$$
immediately from (\ref{finMA}).\\

\textbf{b}) We have to consider
$$
n d_2^2\left(\bar{X}_n, \bar{X}_n^* \right) \le n \E^* || \bar{X}'_n - \bar{X}_n^*||^2 = \frac{1}{n} \sum_{t,s=0}^{n-1} \E^* \langle  X'_t - X_t^*, X'_s - X_s^* \rangle ,
$$
where $\bar{X}'_n$ denotes the sample mean of $X_0', \ldots, X_{n-1}'$. According to (\ref{finMA}), we split the differences into 3 parts $X'_t - X_t^* = a_t + b_t + c_t$, i.e.
\begin{equation*}
X_t'-X_t^*= + \sum_{k=1}^t \Psi^{t-k}\left(\epsilon_{k}'-\epsilon_{k}^*\right) + \sum_{k=1}^t \left(\Psi^{t-k}-\hat{\Psi}_n^{t-k}\right)\left(\epsilon_k^*\right) = a_t + b_t + c_t.
\end{equation*}
So, we have to study
$$
\frac{1}{n} \sum_{t,s=0}^{n-1} \E^* \langle  a_t + b_t + c_t, a_s + b_s + c_s \rangle . 
$$
We show in the following three parts of the proof that the terms 
\begin{equation} \label{abc}
\frac{1}{n} \sum_{t,s=0}^{n-1} \E^* \langle a_t, c_s\rangle , \; \frac{1}{n} \sum_{t,s=0}^{n-1} \E^* \langle b_t, b_s\rangle  \; \text{ and }\; \frac{1}{n} \sum_{t,s=0}^{n-1} \E^* \langle c_t, c_s\rangle  
\end{equation}
are of order $o_p(1)$. The remaining terms can be handled analogously, and the assertion follows.\\

\textbf{c}) Due to independence of $X_0'$ and $\epsilon_k^*$ for $k \ge 1$, and the fact that their mean is 0,
\begin{eqnarray*}
\E^* \langle a_t, c_s\rangle  &=& \sum_{k=1}^s \E^* \langle  \left(\Psi^t-\hat{\Psi}_n^t\right)\left(X_0'\right), \left(\Psi^{s-k}-\hat{\Psi}_n^{s-k}\right)\left(\epsilon_k^*\right)\rangle  \\
&=& \sum_{k=1}^s \langle  \E^* \left(\Psi^t-\hat{\Psi}_n^t\right)\left(X_0'\right) , \E^* \left(\Psi^{s-k}-\hat{\Psi}_n^{s-k}\right)\left(\epsilon_k^*\right) \rangle \, =\, 0
\end{eqnarray*}
Therefore, the first term of (\ref{abc}) vanishes.\\

\textbf{d}) As $(\epsilon_{k}',\epsilon_{k}^*), k=1, \ldots, n,$ are independent with mean 0, we have for $s \le t$ and $||\Psi||_{\mathcal{L}} \le \hat{\delta} < 1$ 
\begin{eqnarray*}
\E^* \langle b_t, b_s\rangle  &=& \sum_{k=1}^t \sum_{l=1}^s \langle  \Psi^{t-k}\left(\epsilon_{k}'-\epsilon_{k}^*\right), \Psi^{s-l}\left(\epsilon_{l}'-\epsilon_{l}^*\right) \rangle \\
&=& \sum_{k=1}^s \E^* \langle  \Psi^{t-k}\left(\epsilon_{k}'-\epsilon_{k}^*\right), \Psi^{s-k}\left(\epsilon_{k}'-\epsilon_{k}^*\right) \rangle\\
&\le& \sum_{k=1}^s ||\Psi^{t-k}||_{\mathcal{L}} ||\Psi^{s-k}||_{\mathcal{L}} \E^* ||\epsilon_{k}'-\epsilon_{k}^*||^2\\
&\le& \sum_{k=1}^s \hat{\delta}^{t+s-2k} d_2^2(F, \hat{F}_n)\\
&=& \hat{\delta}^{t-s} \sum_{k=1}^s \hat{\delta}^{2(s-k)} d_2^2(F, \hat{F}_n) \le \frac{\hat{\delta}^{t-s}}{1 - \hat{\delta}^2 } d_2^2(F, \hat{F}_n) 
\end{eqnarray*}
We conclude
\begin{eqnarray*}
\frac{1}{n} \sum_{t,s=0}^{n-1} \E^* \langle b_t, b_s\rangle &\le& \frac{2}{n} \sum_{t=0}^{n-1} \sum_{s=0}^t \hat{\delta}^{t-s} d_2^2(F, \hat{F}_n)\ \frac{1}{1 - \hat{\delta}^2 } \\
&\le& \frac{2}{n} \sum_{t=0}^{n-1} \frac{1}{1-\hat{\delta}} d_2^2(F, \hat{F}_n) \ \frac{1}{1 - \hat{\delta}^2 } = o_p(1)
\end{eqnarray*}
by Theorem \ref{theorem2}.\\

\textbf{e}) From Theorem \ref{theorem2} and Lemma 8.3 of \citep{Bickel}
\begin{equation} \label{Eeps2}
\E^*\left\Vert\epsilon_1^*\right\Vert^2=\dfrac{1}{n}\sum_{t=1}^n\left\Vert\hat{\epsilon}_t\right\Vert^2 \underset{p}\rightarrow \E^*\left\Vert\epsilon_1\right\Vert^2,
\end{equation}
i.e. $\E^*\left\Vert\epsilon_t^*\right\Vert^2 = O_p(1)$. As $\epsilon_{k}^*, k=1, \ldots, n,$ are independent, we have for $s \le t$
\begin{eqnarray*}
\E^* \langle c_t, c_s\rangle &=& \sum_{k=1}^t \sum_{l=1}^s \E^* \langle  \left(\Psi^{t-k}-\hat{\Psi}_n^{t-k}\right)\left(\epsilon_k^*\right), \left(\Psi^{s-l}-\hat{\Psi}_n^{s-l}\right)\left(\epsilon_l^*\right) \rangle \\
&=& \sum_{k=1}^s \E^* \langle  \left(\Psi^{t-k}-\hat{\Psi}_n^{t-k}\right)\left(\epsilon_k^*\right), \left(\Psi^{s-k}-\hat{\Psi}_n^{s-k}\right)\left(\epsilon_k^*\right) \rangle \\
&\le& \sum_{k=1}^s \hat{\delta}^{t+s-2k} ||\Psi - \hat{\Psi}_n||_\mathcal{L}^2\ \E^* || \epsilon_k^*||^2\, O_p(1) \\
&=& \hat{\delta}^{t-s} \sum_{k=1}^s \hat{\delta}^{2(s-k)} ||\Psi - \hat{\Psi}_n||_\mathcal{L}^2\ O_p(1) = \hat{\delta}^{t-s} ||\Psi - \hat{\Psi}_n||_\mathcal{L}^2\ O_p(1)
\end{eqnarray*}
using (\ref{Eeps2}) and Lemma \ref{lemma4}, b). We conclude 
$$
\frac{1}{n} \sum_{t,s=0}^{n-1} \E^* \langle c_t, c_s\rangle \le \frac{2}{n} \sum_{t=0}^{n-1} \sum_{s=0}^t \hat{\delta}^{t-s} ||\Psi - \hat{\Psi}_n||_\mathcal{L}^2\, O_p(1) =||\Psi - \hat{\Psi}_n||_\mathcal{L}^2\, O_p(1) = o_p(1)
$$
by Lemma \ref{lemma4}, a).
\end{proof}

\begin{lemma} \label{lemma5}
Under the conditions of Theorem \ref{bootmean}, we have \\
a) $\E^*\hat{\Gamma}_n^*=\hat{\Gamma}_n + O_p\left(\dfrac{1}{n}\right)$\\
b) $\E^*\hat{C}_n^*=\hat{C}_n \hat{\Pi}_{k_n} + O_p\left(\dfrac{1}{n}\right)$
\end{lemma}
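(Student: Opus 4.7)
The plan is to reduce (b) to (a), and to prove (a) by comparing a \emph{sample Lyapunov identity} for $\hat{\Gamma}_n$ against the exact Lyapunov recursion satisfied by $\Gamma_t^* := \E^*(X_t^* \otimes X_t^*)$, then summing geometrically using $\|\hat{\Psi}_n\|_\mathcal{L} \le \hat{\delta} < 1$ from Lemma \ref{lemma4}(b).

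For (a) I would expand $X_t \otimes X_t$ via $X_t = \hat{\Psi}_n(X_{t-1}) + \hat{\epsilon}_t$ and average over $t = 1,\ldots,n$. Two cross sums appear; using the Kronecker rules (\ref{kronecker}) and the identity $\frac{1}{n}\sum_{t=1}^n X_{t-1}\otimes \hat{\epsilon}_t = \hat{C}_n - \hat{\Psi}_n\hat{\Gamma}_n = \hat{C}_n(I - \hat{\Pi}_{k_n})$ (Lemma \ref{lemma1}), they reduce respectively to $\hat{C}_n(I - \hat{\Pi}_{k_n})\hat{\Psi}_n^T$ and $\hat{\Psi}_n(I - \hat{\Pi}_{k_n})\hat{C}_n^T$, both of which vanish because Lemma \ref{lemma1} gives $\hat{\Psi}_n \hat{\Pi}_{k_n} = \hat{\Psi}_n$ (equivalently $(I - \hat{\Pi}_{k_n})\hat{\Psi}_n^T = 0$). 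Absorbing the boundary term $(X_n \otimes X_n - X_0 \otimes X_0)/n$ and the centering correction $\bar{\hat{\epsilon}} \otimes \bar{\hat{\epsilon}}$ (both $O_p(1/n)$ in $\|\cdot\|_\mathcal{L}$; for the latter, $\bar{\hat{\epsilon}} = \bar{\epsilon}_n + (\Psi-\hat{\Psi}_n)\bar{X}_n = O_p(n^{-1/2})$ by Lemma \ref{lemma4}(a) and standard moments for the stationary FAR(1)), one obtains
\[
\hat{\Gamma}_n = \hat{\Psi}_n \hat{\Gamma}_n \hat{\Psi}_n^T + \hat{\Gamma}_\epsilon + R_n, \qquad \|R_n\|_\mathcal{L} = O_p(1/n),
\]
with $\hat{\Gamma}_\epsilon := \frac{1}{n}\sum \tilde{\epsilon}_t \otimes \tilde{\epsilon}_t = \E^*(\epsilon_1^* \otimes \epsilon_1^*)$. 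The analogous expansion of $X_t^* \otimes X_t^*$, now with conditional independence of $\epsilon_t^*$ from $X_{t-1}^*$ and $\E^*\epsilon_t^* = 0$ killing the two cross terms \emph{exactly}, yields the noise-free recursion $\Gamma_t^* = \hat{\Psi}_n \Gamma_{t-1}^* \hat{\Psi}_n^T + \hat{\Gamma}_\epsilon$. Subtracting and iterating,
\[
\Gamma_t^* - \hat{\Gamma}_n = \hat{\Psi}_n^t (\Gamma_0^* - \hat{\Gamma}_n)(\hat{\Psi}_n^T)^t - \sum_{k=0}^{t-1} \hat{\Psi}_n^k R_n (\hat{\Psi}_n^T)^k,
\]
and averaging over $t = 0, \ldots, n-1$: the first piece contributes $\frac{1}{n}\sum_t \hat{\delta}^{2t}\|\Gamma_0^* - \hat{\Gamma}_n\|_\mathcal{L} = O_p(1/n)$, and the second contributes at most $\frac{1}{n}\cdot n \cdot \|R_n\|_\mathcal{L}/(1-\hat{\delta}^2) = O_p(1/n)$.

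Part (b) is then short: using $y \otimes \hat{\Psi}_n(z) = \hat{\Psi}_n(y \otimes z)$ gives $X_t^* \otimes X_{t+1}^* = \hat{\Psi}_n(X_t^* \otimes X_t^*) + X_t^* \otimes \epsilon_{t+1}^*$, and the conditional expectation of the second summand vanishes by independence of $\epsilon_{t+1}^*$ from $X_t^*$ together with $\E^* \epsilon_{t+1}^* = 0$. Hence $\E^* \hat{C}_n^* = \hat{\Psi}_n \E^* \hat{\Gamma}_n^*$, and combining (a) with $\|\hat{\Psi}_n\|_\mathcal{L} \le \hat{\delta}$ and the identity $\hat{\Psi}_n \hat{\Gamma}_n = \hat{C}_n \hat{\Gamma}_n^\dagger \hat{\Gamma}_n = \hat{C}_n \hat{\Pi}_{k_n}$ from Lemma \ref{lemma1} yields (b).

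The main obstacle is the sample Lyapunov identity itself: in the scalar AR case the cross sums vanish automatically by the normal equations $\sum X_{t-1}\hat{\epsilon}_t = 0$, but here $\hat{\Psi}_n$ inverts $\hat{\Gamma}_n$ only on $\mathrm{ran}(\hat{\Pi}_{k_n})$, so $\frac{1}{n}\sum X_{t-1} \otimes \hat{\epsilon}_t$ is only ``orthogonal'' to that range, and the cancellation of the cross sums is recovered only after right- or left-multiplying by $\hat{\Psi}_n^T$ or $\hat{\Psi}_n$ and invoking $\hat{\Psi}_n \hat{\Pi}_{k_n} = \hat{\Psi}_n$. Once this identity is established, the rest is routine geometric bookkeeping.
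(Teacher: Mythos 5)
Your proposal is correct and follows essentially the same route as the paper's: the central ingredient in both is the identity $\frac{1}{n}\sum_{t=1}^n\tilde{\epsilon}_t\otimes\tilde{\epsilon}_t = \hat{\Gamma}_n - \hat{\Psi}_n\hat{\Gamma}_n\hat{\Psi}_n^T + O_p(1/n)$ (your sample Lyapunov identity, with the cross sums annihilated via $\hat{\Psi}_n\hat{\Pi}_{k_n}=\hat{\Psi}_n$ from Lemma \ref{lemma1}), combined with $\Vert\hat{\Psi}_n^t\Vert_{\mathcal{L}}\le\hat{\delta}^t$ and a telescoping argument, the only difference being that the paper expands the moving-average representation (\ref{finMA}) where you iterate the recursion for $\E^*(X_t^*\otimes X_t^*)$ — the same computation in different bookkeeping. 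Your reduction of part (b) to part (a) via $\E^*\hat{C}_n^*=\hat{\Psi}_n\,\E^*\hat{\Gamma}_n^*$ and $\hat{\Psi}_n\hat{\Gamma}_n=\hat{C}_n\hat{\Pi}_{k_n}$ is a slightly more economical route than the paper's repetition of the part-(a) calculation, but the substance is identical.
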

\begin{proof}
a) Plugging in the recursive representation (\ref{finMA}) of $X_t^*$ into the definition of $\hat{\Gamma}_n^*$, we get 
\begin{equation*} 
\begin{aligned}
\hat{\Gamma}_n^* &=\dfrac{1}{n}\sum_{t=0}^{n-1}\left( \hat{\Psi}_n^t\left(X_0'\right)\otimes \hat{\Psi}_n^t\left(X_0'\right) + \sum_{k=1}^t\hat{\Psi}_n^t\left(X_0'\right)\otimes \hat{\Psi}_n^{t-k}\left(\epsilon_k^*\right) \right.\\
&\quad\qquad \left. + \sum_{k=1}^t \hat{\Psi}_n^{t-k}\left(\epsilon_k^*\right)\otimes \hat{\Psi}_n^t\left(X_0'\right)+ \sum_{k,l=1}^t\hat{\Psi}_n^{t-k}\left(\epsilon_k^*\right)\otimes \hat{\Psi}_n^{t-l}\left(\epsilon_l^*\right) \right)
\end{aligned}
\end{equation*}
As $\E^*\epsilon_k^*=0$ and, hence, $\E^*\hat{\Psi}^l\left(\epsilon_k^*\right)=0$ due to linearity and as $\epsilon_1^*,\ldots,\epsilon_n^*,X_0'$ are independent, we get
\begin{equation} \label{boldstar}
\E^*\hat{\Gamma}_n^*=\dfrac{1}{n}\sum_{t=0}^{n-1}\left(\E^*\hat{\Psi}_n^t\left(X_0'\right)\otimes\hat{\Psi}_n^t\left(X_0'\right)+ \sum_{k=1}^t\E^*\hat{\Psi}_n^{t-k}\left(\epsilon_k^*\right)\otimes\hat{\Psi}_n^{t-k}\left(\epsilon_k^*\right) \right)
\end{equation}
As in the bootstrap world, $\hat{\Psi}_n^l$ are fixed operators, in view of (\ref{kronecker}) we have to investigate mainly $\E^*\epsilon_k^*\otimes\epsilon_k^*$.
\begin{equation*}
\E^*\epsilon_k^*\otimes\epsilon_k^*=\dfrac{1}{n}\sum_{t=1}^n\tilde{\epsilon}_t\otimes \tilde{\epsilon}_t=\dfrac{1}{n}\sum_{t=1}^n\left(\hat{\epsilon}_t-\bar{\hat{\epsilon}}_n\right) \otimes \left(\hat{\epsilon}_t-\bar{\hat{\epsilon}}_n\right)=\dfrac{1}{n}\sum_{t=1}^n\hat{\epsilon}_t \otimes \hat{\epsilon}_t - \bar{\hat{\epsilon}}_n \otimes \bar{\hat{\epsilon}}_n
\end{equation*}
with $\bar{\hat{\epsilon}}_n=\dfrac{1}{n}\sum_{k=1}^n\hat{\epsilon}_k$. As $\hat{\epsilon}_k=X_k-\hat{\Psi}_n\left(X_{k-1}\right)$,
\begin{eqnarray*}
\dfrac{1}{n}\sum_{t=1}^n\hat{\epsilon}_t \otimes \hat{\epsilon}_t &=&\dfrac{1}{n}\sum_{t=1}^n\left(X_t-\hat{\Psi}_n\left(X_{t-1}\right)\right)\otimes \left(X_t-\hat{\Psi}_n\left(X_{t-1}\right)\right) \\
&=& \hat{\Gamma}_n + \dfrac{1}{n}\left(X_n \otimes X_n - X_0 \otimes X_0\right)-\dfrac{1}{n}\sum_{t=1}^n\hat{\Psi}_n\left(X_{t-1}\right)\otimes X_t \\
& &- \dfrac{1}{n}\sum_{t=1}^n X_t \otimes \hat{\Psi}_n\left(X_{t-1}\right) + \hat{\Psi}_n\hat{\Gamma}_n\hat{\Psi}_n^T
\end{eqnarray*}
Using (\ref{kronecker}), the second and third terms are $-\hat{C}_n\hat{\Psi}_n^T$ and $-\hat{\Psi}_n\hat{C}_n^T$ respectively, such that, as $\hat{C}_n=\hat{\Psi}_n\hat{\Gamma}_n$
\begin{eqnarray*}
\dfrac{1}{n}\sum_{t=1}^n\hat{\epsilon}_t\otimes \hat{\epsilon}_t &=& \hat{\Gamma}_n-\hat{\Psi}_n\hat{\Gamma}_n\hat{\Psi}_n^T+\dfrac{1}{n}\left(X_n \otimes X_n - X_0 \otimes X_0\right) \\
&=& \hat{\Gamma}_n-\hat{\Psi}_n\hat{\Gamma}_n\hat{\Psi}_n^T + O_p\left(\dfrac{1}{n}\right)
\end{eqnarray*}
Similarly, we have
\begin{eqnarray*}
\bar{\hat{\epsilon}}_n \otimes \bar{\hat{\epsilon}}_n &=& \dfrac{1}{n^2}\sum_{k,l=1}^n \hat{\epsilon}_k \otimes \hat{\epsilon}_l=\dfrac{1}{n^2}\sum_{k,l=1}^n\left(X_k-\hat{\Psi}_n\left(X_{k-1}\right)\right)\otimes \left(X_l-\hat{\Psi}_n\left(X_{l-1}\right)\right) \\
&=& \bar{X}_{1:n}\otimes \bar{X}_{1:n} - \hat{\Psi}_n\left(\bar{X}_{1:n}\otimes \bar{X}_{0:(n-1)}\right)-\left( \bar{X}_{0:(n-1)}\otimes \bar{X}_{1:n}\right)\hat{\Psi}_n^T \\
&& \hat{\Psi}_n\left(\bar{X}_{0:(n-1)} \otimes \bar{X}_{0:(n-1)}\right)\bar{\Psi}_n^T
\end{eqnarray*}
where $\bar{X}_{1:n}, \ \bar{X}_{0:(n-1)}$ denote the sample means of $X_1,\ldots,X_n$ respectively $X_0,\ldots,X_{n-1}$. As, from Lemma \ref{lemma4}, $\left\Vert\hat{\Psi}_n-\Psi\right\Vert_{\mathcal{L}} \underset{a.s.}\rightarrow 0$ we have $\left\Vert\hat{\Psi}_n\right\Vert_{\mathcal{L}}=O_p(1)$, and as $\bar{X}_{0:(n-1)},\bar{X}_{1:n}$ are $O_p\left(\dfrac{1}{\sqrt{n}}\right)$ from the law of large numbers of FAR(1)-processes (compare Theorem 3.7 of \citep{Bosq}), we immediately get that $\bar{\hat{\epsilon}}_n \otimes \bar{\hat{\epsilon}}_n=O_p\left(\dfrac{1}{n}\right)$. So we get 
\begin{equation*}
\E^*\epsilon_k^* \otimes \epsilon_k^* = \hat{\Gamma}_n-\hat{\Psi}_n\hat{\Gamma}_n\hat{\Psi}_n^T + \dfrac{1}{n}R_n
\end{equation*}
with $R_n=O_p(1)$.
Hence, we have for the dominant term in $\E^*\hat{\Gamma}_n^*$
\begin{eqnarray*}
\E^*\sum_{k=1}^t\hat{\Psi}_n^{t-k}\epsilon_k^* \otimes \epsilon_k^*\left(\hat{\Psi}_n^{t-k}\right)^T &=& \sum_{k=1}^t\hat{\Psi}_n^{t-k}\left(\hat{\Gamma}_n-\hat{\Psi}_n\hat{\Gamma}_n\hat{\Psi}_n^T\right)\left(\hat{\Psi}_n^{t-k}\right)^T + \sum_ {k=1}^t\hat{\Psi}_n^{t-k}\dfrac{1}{n}R_n\left(\hat{\Psi}_n^{t-k}\right)^T \\
&=& \sum_{k=1}^t\hat{\Psi}_n^{t-k}\hat{\Gamma}_n\left(\hat{\Psi}_n^{t-k}\right)^T - \sum_{l=0}^{t-1}\hat{\Psi}_n^{t-l}\hat{\Gamma}_n\left(\hat{\Psi}_n^{t-l}\right)^T + O_p\left(\dfrac{1}{n}\right) \\
&=& \hat{\Gamma}_n-\hat{\Psi}_n^t\hat{\Gamma}_n\left(\hat{\Psi}_n^t\right)^T + O_p\left(\dfrac{1}{n}\right)
\end{eqnarray*}
where we have used that $R_n=O_p(1)$,  $\left\Vert\hat{\Psi}_n^l\right\Vert_{\mathcal{L}} \leq \left\Vert\hat{\Psi}_n\right\Vert_{\mathcal{L}}^l \leq \hat{\delta}^l$ for some $\hat{\delta}<1$ and large enough $n$ from Lemma \ref{lemma4}, and $\sum_{k=1}^t\hat{\delta}^{2(t-k)} \leq \dfrac{1}{1-\hat{\delta}^2}$. Finally,
\begin{eqnarray*}
\E^*\dfrac{1}{n}\sum_{t=0}^{n-1}\sum_{k=1}^t\hat{\Psi}_n^{t-k}\epsilon_k^* \otimes \epsilon_k^*\left(\hat{\Psi}_n^{t-k}\right)^T &=& \dfrac{1}{n}\sum_{t=0}^{n-1}\left(\hat{\Gamma}_n-\hat{\Psi}_n^t\hat{\Gamma}_n\left(\hat{\Psi}_n^t\right)^T\right) + O_p\left(\dfrac{1}{n}\right) \\
&=& \hat{\Gamma}_n + O_p\left(\dfrac{1}{n}\right)
\end{eqnarray*}
as, using again the above argument that $\left\Vert\hat{\Psi}_n^l\right\Vert_{\mathcal{L}} \leq \hat{\delta}_l$
\begin{equation*}
\left\Vert \sum_{t=0}^{n-1}\hat{\Psi}_n^t\hat{\Gamma}_n\left(\hat{\Psi}_n^t\right)^T\right\Vert_{\mathcal{L}} \leq \sum_{t=0}^{n-1}\hat{\delta}^{2t}\left\Vert\hat{\Gamma}_n \right\Vert_{\mathcal{L}} \leq \dfrac{1}{1-\hat{\delta}^2}\left\Vert\hat{\Gamma}_n \right\Vert_{\mathcal{L}}=O_p(1)
\end{equation*}
Using $\E^* X_0'\otimes X_0'=\E X_0\otimes X_0=\Gamma$, we get by the same kind of arguments that the first term in (\ref{boldstar}) is $O_p\left(\dfrac{1}{n}\right)$. \\

b) Using (\ref{finMA}), we have
$$
X_t^* \otimes X_{t+1}^* = \left( \hat{\Psi}_n^t(X_0') + \sum_{k=1}^t \hat{\Psi}_n^{t-k}(\epsilon_k^*) \right) \otimes \left( \hat{\Psi}_n^{t+1}(X_0') + \sum_{l=1}^{t+1} \hat{\Psi}_n^{t+1-l}(\epsilon_l^*) \right).
$$
Analogously to (\ref{boldstar}), we then conclude
$$
\E^*\hat{C}_n^*=\dfrac{1}{n}\sum_{t=0}^{n-1}\left(\E^*\hat{\Psi}_n^t\left(X_0'\right)\otimes\hat{\Psi}_n^{t+1}\left(X_0'\right)+ \sum_{k=1}^t\E^*\hat{\Psi}_n^{t-k}\left(\epsilon_k^*\right)\otimes\hat{\Psi}_n^{t+1-k}\left(\epsilon_k^*\right) \right). 
$$
From the same kind of calculations as in a), we get
$$
\E^* \hat{C}_n^* = \hat{\Psi}_n \hat{\Gamma}_n - \hat{\Psi}_n^{t+1} \hat{\Gamma}_n (\hat{\Psi}_n^t)^T + O_p(\frac{1}{n}) = \hat{\Psi}_n \hat{\Gamma}_n + O_p(\frac{1}{n}).
$$
As $\hat{\Psi}_n = \hat{C}_n \hat{\Gamma}_n^\dagger$ and, by Lemma \ref{lemma1}, $\hat{\Gamma}_n^\dagger \hat{\Gamma}_n = \hat{\Pi}_{k_n}$, we get the desired result.
\end{proof}

The next two lemmas just state a rule of calculation and an operator norm inequality needed in the following proof.

\begin{lemma}\label{lemma6}
If $\left(U,U^*\right), \ \left(V,V^*\right)$ are i.i.d. $L^2$-valued random variables such that $d_2^2\left(U,U^*\right)=\E \left\Vert U-U^*\right\Vert^2$; then 
\begin{equation*}
\E\left\Vert U\otimes V -U^*\otimes V^* \right\Vert^2_\mathcal{L} \leq 2\left(\E\left\Vert U\right\Vert^2 + \E\left\Vert U^*\right\Vert^2 \right)d_2^2\left(U,U^*\right) 
\end{equation*}
for any $x$ in $L^2$.
\end{lemma}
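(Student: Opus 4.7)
The plan is to exploit the bilinearity of the Kronecker product together with the elementary identity $\|y \otimes z\|_{\mathcal{L}} = \|y\|\,\|z\|$. From $(y\otimes z)(x) = \langle y,x\rangle z$ and Cauchy--Schwarz, this identity is immediate (with equality attained at $x = y/\|y\|$ when $y \neq 0$), so I would note it in one line before beginning the main calculation.

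The first key step is the algebraic decomposition
\begin{equation*}
U \otimes V - U^* \otimes V^* = (U - U^*) \otimes V + U^* \otimes (V - V^*),
\end{equation*}
which allows me to split the contribution of the two coordinates. Applying the triangle inequality for $\|\cdot\|_{\mathcal{L}}$ followed by the norm identity above yields
\begin{equation*}
\|U \otimes V - U^* \otimes V^*\|_{\mathcal{L}} \le \|U - U^*\|\,\|V\| + \|U^*\|\,\|V - V^*\|.
\end{equation*}
Squaring and using $(a+b)^2 \le 2a^2 + 2b^2$ gives a bound in terms of four scalar products of norms.

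The second key step is to take expectations and use the assumption that the pairs $(U,U^*)$ and $(V,V^*)$ are i.i.d. Independence factors each cross product into $\E\|U-U^*\|^2 \cdot \E\|V\|^2$ and $\E\|U^*\|^2 \cdot \E\|V-V^*\|^2$, while identical distribution of the pairs gives $\E\|V\|^2 = \E\|U\|^2$ and $\E\|V-V^*\|^2 = \E\|U-U^*\|^2$. Finally, the optimal coupling assumption $d_2^2(U,U^*) = \E\|U-U^*\|^2$ converts these expectations into $d_2^2(U,U^*)$, and collecting terms produces the stated bound $2(\E\|U\|^2 + \E\|U^*\|^2) d_2^2(U,U^*)$.

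There is no real obstacle here; the proof is a short computation. The only point that deserves a moment of care is making sure that the norm identity for rank-one operators is stated with respect to the operator norm $\|\cdot\|_{\mathcal{L}}$ (the same norm used to define $d_2$ on operators in Section~\ref{secboot}), so that the lemma can be applied directly in the subsequent bootstrap arguments for $\hat{\Gamma}_n^*$ and $\hat{C}_n^*$.
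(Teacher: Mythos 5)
Your proof is correct and follows essentially the same route as the paper's: the same decomposition $U\otimes V - U^*\otimes V^* = (U-U^*)\otimes V + U^*\otimes(V-V^*)$, the bound $\Vert y\otimes z\Vert_{\mathcal{L}}\le\Vert y\Vert\,\Vert z\Vert$, the elementary inequality $(a+b)^2\le 2a^2+2b^2$, and then independence and identical distribution of the pairs to collect terms into $2(\E\Vert U\Vert^2+\E\Vert U^*\Vert^2)\,d_2^2(U,U^*)$. No gaps.
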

\begin{proof} From the definition of $\otimes$ and the Cauchy- Schwarz inequality, we have $\Vert x \otimes y \Vert_\mathcal{L} \le  \Vert x\Vert \Vert y\Vert$ such that
\begin{eqnarray*}
\left\Vert U\otimes V -U^*\otimes V^* \right\Vert^2_\mathcal{L} &=& \left\Vert \left(U-U^*\right) \otimes V  + U^*\otimes \left(V-V^*\right) \right\Vert^2_\mathcal{L} \\
&\leq& 2 \Vert U-U^*\Vert^2 \Vert V \Vert^2 + 2 \Vert U^*\Vert^2 \Vert V-V^* \Vert^2
\end{eqnarray*}
Using independence of $\left(U,U^*\right)$ and $\left(V,V^*\right)$
\begin{eqnarray*}
\E\left\Vert U\otimes V -U^*\otimes V^* \right\Vert^2 &\le& 
2 \E \Vert U-U^*\Vert^2 \E \Vert V \Vert^2 + 2 \E \Vert U^*\Vert^2 \E \Vert V-V^* \Vert^2\\
&\le& 2  d_2^2(U,U^*) \left(\E \left\Vert U\right\Vert^2 + \E \left\Vert U^*\right\Vert^2 \right)
\end{eqnarray*}
as $\E\left\Vert V-V^* \right\Vert^2=\E\left\Vert U-U^* \right\Vert^2$ and $\E\left\Vert V \right\Vert^2=\E\left\Vert U \right\Vert^2$.
\end{proof}

\begin{lemma}\label{lemmaHSnorm}
a) Let $A,B,S: \mathcal{H} \to \mathcal{H}$ be bounded linear operators where, in particular, $S$ is a Hilbert-Schmidt operator. Then, $ASB$ is a Hilbert-Schmidt operator and
\begin{equation} \label{opnorm}
\Vert ASB \Vert_\mathcal{S} \le \Vert A \Vert_\mathcal{L} \Vert S \Vert_\mathcal{S} \Vert B \Vert_\mathcal{L} .
\end{equation} 
b) For $x, y \in \mathcal{H}$, $x \otimes y$ is a Hilbert-Schmidt operator with 
\begin{equation} \label{kronnorm}
\Vert x \otimes y \Vert_\mathcal{S} = \Vert x \Vert\ \Vert y \Vert .
\end{equation}
\end{lemma}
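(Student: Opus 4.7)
The plan is to fix an arbitrary orthonormal basis $u_1,u_2,\ldots$ of $\mathcal{H}$ and estimate the Hilbert–Schmidt norms by direct computation in that basis, exploiting the definitional identity $\|T\|_\mathcal{S}^2=\sum_j\|T(u_j)\|^2$ and the bound $\|Tx\|\le\|T\|_\mathcal{L}\|x\|$ that characterizes the operator norm. Nothing beyond elementary Hilbert-space manipulations and Parseval's identity should be needed; the statement is essentially the standard ideal property of the Hilbert–Schmidt class, but the excerpt needs the explicit constant with the operator norms of $A$ and $B$ on the outside, so I will record the short calculation.

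For part a), I will first bound $\|ASB(u_j)\|\le\|A\|_\mathcal{L}\|SB(u_j)\|$ pointwise, summing over $j$ to pull out $\|A\|_\mathcal{L}^2$. The remaining piece $\sum_j\|SB(u_j)\|^2$ is handled by expanding $SB(u_j)$ in the basis $\{u_k\}$ and swapping the order of summation:
\begin{equation*}
\sum_j\|SB(u_j)\|^2=\sum_j\sum_k\langle B(u_j),S^T(u_k)\rangle^2=\sum_k\|B^T S^T(u_k)\|^2\le\|B\|_\mathcal{L}^2\sum_k\|S^T(u_k)\|^2.
\end{equation*}
Since $\|S^T\|_\mathcal{S}=\|S\|_\mathcal{S}$ (from $\sum_k\|S^T(u_k)\|^2=\sum_k\sum_j\langle u_j,S^T(u_k)\rangle^2=\sum_j\|S(u_j)\|^2$), this yields (\ref{opnorm}) and simultaneously shows $ASB$ is Hilbert–Schmidt.

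For part b), I will apply the definition of $\otimes$ directly: $(x\otimes y)(u_j)=\langle x,u_j\rangle y$, hence $\|(x\otimes y)(u_j)\|^2=|\langle x,u_j\rangle|^2\|y\|^2$, and summing over $j$ together with Parseval's identity gives $\|x\otimes y\|_\mathcal{S}^2=\|x\|^2\|y\|^2$, which is (\ref{kronnorm}). There is no real obstacle here; the only thing to be careful about is the adjoint identity $\|S^T\|_\mathcal{S}=\|S\|_\mathcal{S}$ used in part a), which I included explicitly above so that no unstated auxiliary facts are invoked.
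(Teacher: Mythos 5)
Your proof is correct, but it takes a different route from the paper for part a). The paper disposes of (\ref{opnorm}) by citing Lemmas 16.6 and 16.7 of Meise and Vogt: $ASB$ is Hilbert--Schmidt because the Hilbert--Schmidt operators form an operator ideal, and the singular values of $ASB$ are dominated by $\Vert A\Vert_\mathcal{L}\Vert B\Vert_\mathcal{L}$ times the singular values of $S$, so the inequality follows from the characterization of $\Vert\cdot\Vert_\mathcal{S}^2$ as the sum of squared singular values. You instead give a self-contained computation in a fixed orthonormal basis, peeling off $\Vert A\Vert_\mathcal{L}$ pointwise and then passing to the adjoint via Parseval to peel off $\Vert B\Vert_\mathcal{L}$; the interchange of the two sums is legitimate since all terms are nonnegative, and your explicit verification that $\Vert S^T\Vert_\mathcal{S}=\Vert S\Vert_\mathcal{S}$ closes the argument (it also implicitly re-proves the basis-independence of the Hilbert--Schmidt norm that the paper's definition takes for granted). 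What your version buys is that nothing external is cited --- only Parseval and the definition of the operator norm --- at the cost of one extra unstated but standard fact, $\Vert B^T\Vert_\mathcal{L}=\Vert B\Vert_\mathcal{L}$, in the step $\Vert B^TS^T(u_k)\Vert\le\Vert B\Vert_\mathcal{L}\Vert S^T(u_k)\Vert$; what the paper's version buys is brevity, at the cost of importing the singular-value machinery. For part b) both arguments are the same one-line consequence of the definition of $\otimes$ and Parseval's identity.
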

\begin{proof} a) $ASB$ is a Hilbert-Schmidt operator by Lemma 16.7 of \citep{Meise}. From their Lemma 16.6, we get that the singular values of $ASB$ can be bounded by the product of the operator norms of $A$ and $B$ and the singular values of $S$. This implies (\ref{opnorm}) as the squared Hilbert-Schmidt norm is the sum of the squared singular values.

b) follows immediately from the definition of the Hilbert-Schmidt norm.
\end{proof} 

\begin{proof} (\textbf{Theorem \ref{bootcov}})\\
We choose $X_0^*=X_0'$ and $\left(\epsilon_t',\epsilon_t^*\right), t=1, \ldots, n,$ as in the proof of Theorem \ref{bootmean}. Let $\hat{\Gamma}_n'$ denote the sample covariance operator calculated from $X_0', \ldots, X_{n-1}'$. Due to stationarity of $\{X_t'\}$, we have $\E^*\hat{\Gamma}_n'=\E\hat{\Gamma}_n = \Gamma$, and, from Lemma \ref{lemma5}, $\E^*\hat{\Gamma}_n^*=\hat{\Gamma}_n+O_p\left(\dfrac{1}{n}\right)$. Hence, up to terms of order $\dfrac{1}{n}$,
\begin{eqnarray*}
& &\left(\hat{\Gamma}_n'-\Gamma\right) -\left(\hat{\Gamma}_n^*-\hat{\Gamma}_n\right) = \dfrac{1}{n} \sum_{t=0}^{n-1}A_t + O_p\left(\dfrac{1}{n}\right) \quad \quad \text{with}\\
& &A_t = X_t'\otimes X_t'-\E^*\left(X_t'\otimes X_t'\right)-\left(X_t^*\otimes X_t^*-\E^*\left(X_t^*\otimes X_t^*\right)\right),
\end{eqnarray*}
Using the recursive representation (\ref{finMA}) of $X_t',X_t^*$ and (\ref{kronecker}), we decompose $A_t=a_t + b_t + b_t^T + c_t +d_t$ with
\begin{eqnarray*}
a_t &=& \Psi^t\left[X_0' \otimes X_0'-\E^* \left(X_0' \otimes X_0'\right)\right]\left(\Psi^t\right)^T - \hat{\Psi}_n^t\left[X_0' \otimes X_0'-\E^* \left(X_0' \otimes X_0'\right)\right]\left(\hat{\Psi}_n^t\right)^T \\
b_t &=& \sum_{k=1}^t\left[\Psi^{t-k}\left(X_0' \otimes \epsilon_k' \right)\left(\Psi^t\right)^T - \hat{\Psi}_n^{t-k}\left(X_0' \otimes \epsilon_k^* \right)\left(\hat{\Psi}_n^t\right)^T\right] \\
c_t &=& \sum_{k\neq l=1}^t\left[\Psi^{t-l}\left(\epsilon_k'\otimes\epsilon_l'\right)\left(\Psi^{t-k}\right)^T- \hat{\Psi}_n^{t-l}\left(\epsilon_k^*\otimes\epsilon_l^*\right)\left(\hat{\Psi}_n^{t-k}\right)^T\right] \\
d_t &=& \sum_{k=1}^t\left[\Psi^{t-k}\left(\epsilon_k'\otimes\epsilon_k'\right)-\E^* \left(\epsilon_k'\otimes\epsilon_k'\right)\left(\Psi^{t-k}\right)^T - \hat{\Psi}_n^{t-k}\left(\epsilon_k^*\otimes\epsilon_k^*\right)+\E^*\left(\epsilon_k^*\otimes\epsilon_k^*\right)\left(\hat{\Psi}_n^{t-k}\right)^T\right]
\end{eqnarray*}
where we have used that $\left(\epsilon_k',\epsilon_k^*\right)$ are i.i.d. with mean 0 to get, e.g., $\E^*\epsilon_k'\otimes\epsilon_l'=0$ for $k\neq l$.

By definition of the Mallows metric and from $\Vert.\Vert_\mathcal{L} \le \Vert.\Vert_\mathcal{S}$, 
\begin{eqnarray}
& &d_2^2\left(\hat{\Gamma}_n-\Gamma, \hat{\Gamma}_n^*-\hat{\Gamma}_n\right) \le 
\E^*\left\Vert\left(\hat{\Gamma}_n'-\Gamma \right) - \left(\hat{\Gamma}_n^*-\hat{\Gamma}_n\right)\right\Vert^2_\mathcal{L} \nonumber \\
&\le& 2 \E^* \left\Vert \left(\hat{\Gamma}_n'-\E^* \hat{\Gamma}'_n \right) -\left(\hat{\Gamma}_n^*- \E^* \hat{\Gamma}_n^*\right) \right\Vert^2_\mathcal{L} + 2 \left\Vert \E^*  \hat{\Gamma}_n^* - \hat{\Gamma}_n \right\Vert^2_\mathcal{L} \nonumber \\
&\le& 2 \E^* \left\Vert \left(\hat{\Gamma}_n'-\E^* \hat{\Gamma}'_n \right) -\left(\hat{\Gamma}_n^*- \E^* \hat{\Gamma}_n^*\right) \right\Vert^2_\mathcal{S} + 2 \left\Vert \E^*  \hat{\Gamma}_n^* - \hat{\Gamma}_n \right\Vert^2_\mathcal{L} \nonumber \\
&=& \dfrac{2}{n^2}\sum_{s,t=0}^{n-1} \E^* \left\langle A_t,A_s\right\rangle_\mathcal{S} +  O_p\left(\dfrac{1}{n^2}\right), \label{d2Gamma}
\end{eqnarray}
using Lemma \ref{lemma5}. Hence, we have to study terms like 
$$
\E^*\sum_{s,t=0}^{n-1}\left\langle a_t,b_s\right\rangle_\mathcal{S}, \ \E^*\sum_{s,t=0}^{n-1}\left\langle c_t,c_s\right\rangle_\mathcal{S} \text{ or } \E^*\sum_{s,t=0}^{n-1}\left\langle d_t,d_s \right\rangle_\mathcal{S} . $$

\textbf{a}) We start with $\sum_{s,t=0}^{n-1}\E^*\left\langle c_t,c_s\right\rangle_\mathcal{S} = \sum_{s,t=0}^{n-1}\sum_{k\neq l=1}^t\sum_{i\neq j=1}^s\E^* B_{klij}^{(s,t)}$ where
\begin{equation*}
\begin{aligned}
B_{klij}^{(s,t)} &=\left\langle\Psi^{t-l}\left(\epsilon_k'\otimes\epsilon_l'\right)\left(\Psi^{t-k}\right)^T - \hat{\Psi}_n^{t-l}\left(\epsilon_k^*\otimes\epsilon_l^*\right)\left(\hat{\Psi}_n^{t-k}\right)^T,\right. \\
&\hspace{10mm} \left. \Psi^{s-j}\left(\epsilon_i'\otimes\epsilon_j'\right)\left(\Psi^{s-i}\right)^T - \hat{\Psi}_n^{s-j}\left(\epsilon_i^*\otimes\epsilon_j^*\right)\left(\hat{\Psi}_n^{s-i}\right)^T \right\rangle_\mathcal{S}
\end{aligned}
\end{equation*}
As $k\neq l$, we have $\E^* \epsilon_k'\otimes\epsilon_l'(z)=\E^* \left\langle\epsilon_k',z\right\rangle\epsilon_l'=\E^* \left\langle\epsilon_k',z\right\rangle\E^* \epsilon_l'=0$ for all $z$, i.e. $\E^* \epsilon_k'\otimes\epsilon_l' = 0$ and, analogously, $\E^*\epsilon_k^*\otimes \epsilon_l^*=0$. Moreover, if e.g. $j \neq k,l$, we have for arbitrary $X, y \in \mathcal{H}$
\begin{eqnarray*}
\E^* \left\langle\Psi^{t-l}\left(\epsilon_k'\otimes\epsilon_l'\right)(z),\Psi^{s-j}\left(\epsilon_i'\otimes\epsilon_j'\right)(y)\right\rangle &=& \E^* \left\langle\epsilon_k',z \right\rangle\left\langle\Psi^{t-l}\epsilon_l',\Psi^{s-j}\epsilon_j'\right\rangle\left\langle\epsilon_i',y\right\rangle \\
&=& \left\langle\E^* \left\lbrace\left\langle\epsilon_k',z \right\rangle\left\langle\epsilon_i',y\right\rangle\Psi^{t-l}\epsilon_l'\right\rbrace, \E^* \Psi^{s-j}\epsilon_j'\right\rangle =0
\end{eqnarray*}
as $\E\Psi^{s-j}\epsilon_j'=\Psi^{s-j}\left(\E^* \epsilon_j'\right) = 0$. Together with the definition of $\langle ., . \rangle_\mathcal{S}$, we get
$$
\E^* \left\langle\Psi^{t-l}\left(\epsilon_k'\otimes\epsilon_l'\right)(\Psi^{t-k})^T,\Psi^{s-j}\left(\epsilon_i'\otimes\epsilon_j'\right)(\Psi^{s-i})^T\right\rangle_\mathcal{S} = 0.
$$
Analogously, the expectations of the other terms are vanishing, such that for $k \neq l, i \neq j, \ \E^* B_{klij}^{(s,t)}=0$ except for $k=i\neq l=j$ or $k=j \neq l=i$.
To get the expectations of the remaining terms, we decompose
\begin{eqnarray*}
&&\Psi^{t-l}\left(\epsilon_k'\otimes\epsilon_l'\right)\left(\Psi^{t-k}\right)^T-\hat{\Psi}_n^{t-l}\left(\epsilon_k^*\otimes\epsilon_l^*\right)\left(\hat{\Psi}_n^{t-k}\right)^T \\
&&\hspace{6mm}=\left(\Psi^{t-l}-\hat{\Psi}_n^{t-l}\right)\left(\epsilon_k'\otimes\epsilon_l'\right)\left(\Psi^{t-k}\right)^T + \hat{\Psi}_n^{t-l}\left(\epsilon_k'\otimes\epsilon_l'\right)\left(\Psi^{t-k}-\hat{\Psi}_n^{t-k}\right)^T \\
&&\hspace{10mm} + \hat{\Psi}_n^{t-l}\left(\epsilon_k'\otimes\epsilon_l'-\epsilon_k^*\otimes\epsilon_l^*\right)\left(\hat{\Psi}_n^{t-k}\right)^T = \beta_{1,t}+\beta_{2,t}+\beta_{3,t}.
\end{eqnarray*}
Using $\left\Vert\Psi^j\right\Vert_{\mathcal{L}} \leq \left\Vert\Psi\right\Vert_{\mathcal{L}}^j < \hat{\delta}^j$ for some $\hat{\delta} <1$, (\ref{opnorm}) and (\ref{kronnorm}),
\begin{eqnarray*}
\left\Vert\beta_{1,t} \right\Vert_\mathcal{S} &\leq & \left\Vert\Psi^{t-l}-\hat{\Psi}_n^{t-l}\right\Vert_{\mathcal{L}}\left\Vert\epsilon_k'\otimes\epsilon_l'\right\Vert_{\mathcal{S}}\left\Vert\Psi^{t-k}\right\Vert_{\mathcal{L}}\\
&\leq & D\hat{\delta}^{2t-k-l}\left\Vert\Psi-\hat{\Psi}_n\right\Vert_{\mathcal{L}}\ \Vert\epsilon_k'\Vert\ \Vert\epsilon_l'\Vert
\end{eqnarray*}
for some generic constant $D$ from Lemma \ref{lemma4}.
Analogously,
\begin{eqnarray*}
\left\Vert\beta_{2,t} \right\Vert_\mathcal{S} &\leq & D\hat{\delta}^{2t-k-l}\left\Vert\Psi-\hat{\Psi}_n\right\Vert_{\mathcal{L}}\ \Vert\epsilon_k'\Vert\ \Vert\epsilon_l'\Vert\\
\left\Vert\beta_{3,t} \right\Vert_\mathcal{S} &\leq & \hat{\delta}^{2t-k-l}\left\Vert \epsilon_k'\otimes\epsilon_l'-\epsilon_k^*\otimes\epsilon_l^*\right\Vert_{\mathcal{S}}
\end{eqnarray*}
where we use $\left\Vert\hat{\Psi}_n^j\right\Vert_{\mathcal{L}} \leq \left\Vert\hat{\Psi}_n\right\Vert_{\mathcal{L}}^j$ and $\left\Vert\hat{\Psi}_n\right\Vert_{\mathcal{L}} \leq \hat{\delta}$ for large enough $n$ again from Lemma \ref{lemma4}. By, again, (\ref{kronnorm})
\begin{eqnarray}
\left\Vert\epsilon_k'\otimes\epsilon_l'-\epsilon_k^*\otimes\epsilon_l^*\right\Vert_{\mathcal{S}} &\leq& \left\Vert\left(\epsilon_k'-\epsilon_k^*\right)\otimes\epsilon_l'\right\Vert_{\mathcal{S}}+\left\Vert\epsilon_k^* \otimes \left(\epsilon_l'-\epsilon_l^*\right)\right\Vert_{\mathcal{S}} \nonumber \\
&=& \left\Vert\epsilon_k'-\epsilon_k^*\right\Vert \left\Vert\epsilon_l'\right\Vert + \left\Vert\epsilon_k^*\right\Vert\left\Vert\epsilon_l'-\epsilon_l^*\right\Vert \label{epsxeps}
\end{eqnarray}
such that
$$
\left\Vert\beta_{3t}(x)\right\Vert_\mathcal{S} \leq \hat{\delta}^{2t-k-l}\left\lbrace\left\Vert\epsilon_l'\right\Vert\left\Vert\epsilon_k'-\epsilon_k^*\right\Vert + \left\Vert\epsilon_k^*\right\Vert\left\Vert\epsilon_l'-\epsilon_l^*\right\Vert \right\rbrace .
$$
Now, as $k \neq l$,
\begin{eqnarray}
\left\vert\E^* B_{klkl}^{(s,t)}\right\vert &\leq&  \E^*\left\vert\left\langle\beta_{1t}+\beta_{2t}+\beta_{3t},\beta_{1s}+\beta_{2s}+\beta_{3s}\right\rangle_\mathcal{S} \right\vert \nonumber \\
&\leq & 4 D^2 \hat{\delta}^{2(t+s-k-l)}\left\Vert\Psi-\hat{\Psi}_n\right\Vert_{\mathcal{L}}^2\E^* \left\Vert\epsilon_k'\right\Vert^2\E^* \left\Vert\epsilon_l'\right\Vert^2 \nonumber \\
& & +4 D \hat{\delta}^{2(t+s-k-l)} \left\Vert \Psi-\hat{\Psi}_n \right\Vert_{\mathcal{L}} \E^* \left\Vert\epsilon_l'\right\Vert^2 \E^* \left(\left\Vert\epsilon_k'\right\Vert \left\Vert\epsilon_k'-\epsilon_k^*\right\Vert\right)  \nonumber \\
& &+ 4 D \hat{\delta}^{2(t+s-k-l)} \left\Vert \Psi-\hat{\Psi}_n \right\Vert_{\mathcal{L}}\E^* \left( \left\Vert\epsilon_k'\right\Vert \left\Vert\epsilon_k^*\right\Vert\right) \E^* \left( \left\Vert\epsilon_l'\right\Vert \left\Vert\epsilon_l'-\epsilon_l^*\right\Vert \right)  \nonumber \\
& & +\hat{\delta}^{2(t+s-k-l)}\E^* \left\lbrace\left\Vert\epsilon_l'\right\Vert\left\Vert\epsilon_k'-\epsilon_k^*\right\Vert + \left\Vert\epsilon_k^*\right\Vert\left\Vert\epsilon_l'-\epsilon_l^*\right\Vert\right\rbrace^2 \label{prTh3a}
\end{eqnarray}
Note that the expectation in the last line of (\ref{prTh3a}) may be written as 
\begin{eqnarray*}
&&\E^* \left\Vert\epsilon_l'\right\Vert^2\E^* \left\Vert\epsilon_k'-\epsilon_k^*\right\Vert^2 + 2\E^* \left(\left\Vert\epsilon_l'\right\Vert\left\Vert\epsilon_l'-\epsilon_l^*\right\Vert\right)\E^* \left(\left\Vert\epsilon_k^*\right\Vert\left\Vert\epsilon_k'-\epsilon_k^*\right\Vert\right) + \E^*\left\Vert\epsilon_k^*\right\Vert^2\E^*\left\Vert\epsilon_l'-\epsilon_l^*\right\Vert^2 \\
&& \hspace{6mm} \leq \left(\E^* \left\Vert\epsilon_l'\right\Vert^2 + 2\sqrt{\E^* \left\Vert\epsilon_l'\right\Vert^2}\sqrt{\E^*\left\Vert\epsilon_k^*\right\Vert^2} + \E^*\left\Vert\epsilon_k^*\right\Vert^2\right)d_2^2\left(F,\hat{F}_n\right) \\
&& \hspace{6mm} \leq 2\left(\E^* \left\Vert\epsilon_l'\right\Vert^2 + \E^*\left\Vert\epsilon_k^*\right\Vert^2\right)d_2^2\left(F,\hat{F}_n\right)
\end{eqnarray*}
due to our particular choice of $\left(\epsilon_k',\epsilon_k^*\right)$. Analogously, we get for the sum of the two terms involving expectations in the third and fourth line of (\ref{prTh3a})  that it is bounded by, using that $\epsilon_l',\epsilon_k'$ are identically distributed,
\begin{eqnarray*}
&& \E^* \left\Vert\epsilon_l'\right\Vert^2\sqrt{\E^* \left\Vert\epsilon_k'\right\Vert^2}d_2\left(F,\hat{F}_n\right) + \sqrt{\E^* \left\Vert\epsilon_k'\right\Vert^2}\sqrt{\E^*\left\Vert\epsilon_k^*\right\Vert^2}\sqrt{\E^* \left\Vert\epsilon_l'\right\Vert^2}d_2\left(F,\hat{F}_n\right) \\
&& \hspace{8mm} = \E^* \left\Vert\epsilon_k'\right\Vert^2\left(\sqrt{\E^* \left\Vert\epsilon_k'\right\Vert^2} + \sqrt{\E^*\left\Vert\epsilon_k^*\right\Vert^2}\right)d_2\left(F,\hat{F}_n\right)
\end{eqnarray*}
From Theorem \ref{theorem2}, we have $d_2^2\left(F,\hat{F}_n\right)=o_p(1)$ and, using Lemma 8.3 of \citep{Bickel}, $\E^*\left\Vert\epsilon_t^*\right\Vert^2 \underset{p}\rightarrow \E\left\Vert\epsilon_t\right\Vert^2$, i.e. $\E^*\left\Vert\epsilon_t^*\right\Vert^2=O_p(1)$. From Lemma \ref{lemma4}, $\left\Vert\Psi-\hat{\Psi}_n\right\Vert_{\mathcal{L}}\underset{a.s.}\rightarrow 0$. So, we have with some generic constant $D$
$$
\left\vert\E^* B_{klkl}^{(s,t)}\right\vert \leq D\hat{\delta}^{2(t+s-k-l)} o_p(1)
$$
uniformly in $k,l,s,t$. Analogously, we have the same upper bound for $\left\vert\E^* B_{kllk}^{(s,t)}\right\vert$ too. Finally, we conclude, using that $k=i, l=j$ or $k=j, l=i$ is only possible for $k,l \le \min(s,t)$,
\begin{eqnarray*}
\left\vert\sum_{s,t=0}^{n-1} \E^* \left\langle c_t,c_s\right\rangle_\mathcal{S} \right\vert &\leq & \sum_{s,t=0}^{n-1}\sum_{k \neq l=1}^{min(s,t)}\hat{\delta}^{2(t+s-k-l)} o_p(1) \\
&\le& 2 \sum_{0 \le s \le t \le n-1} \hat{\delta}^{2(t-s)}  \sum_{k, l=1}^{s} \hat{\delta}^{2(s-k) + 2(s-l)}  o_p(1) \\
&\le& \sum_{t=1}^n \frac{1}{(1-\hat{\delta}^2)^3}\ o_p(1) = o_p(n).
\end{eqnarray*}

\textbf{b}) As the next term, we consider
\begin{equation*}
\sum_{s,t=0}^{n-1} \E^* \left\langle d_t, d_s \right\rangle_\mathcal{S} = \sum_{s,t=0}^{n-1} \sum_{k=1}^t\sum_{l=1}^s\E^* B_{kl}^{(s,t)}
\end{equation*}
where  $B_{kl}^{(s,t)} = \langle C_k^{(t)}, C_l^{(s)}\rangle_\mathcal{S}$ and
$$
C_k^{(t)} = \Psi^{t-k}\left(\epsilon_k'\otimes\epsilon_k' - \E^* \epsilon_k'\otimes\epsilon_k'\right)\left(\Psi^{t-k}\right)^T - \hat{\Psi}_n^{t-k}\left(\epsilon_k^*\otimes\epsilon_k^* - \E^*\epsilon_k^*\otimes\epsilon_k^*\right)\left(\hat{\Psi}_n^{t-k}\right)^T .
$$
Due to the linearity of the operators involved, recalling that $\hat{\Psi}_n^{t-k}$ is fixed in the bootstrap world, we have $\E^* C^{(t)}_k =0$. Using the independence of $\left(\epsilon_k',\epsilon_k^*\right), \ \left(\epsilon_l',\epsilon_l^*\right)$, we conclude $\E^* B_{kl}^{(s,t)}=0$ for $k \neq l$. For the remaining case $k=l$, as in a), we decompose $C_k^{(t)} $ into 3 terms, where now
\begin{eqnarray*}
\beta_{1t} &=& \left(\Psi^{t-k}-\hat{\Psi}_n^{t-k}\right)\left(\epsilon_k'\otimes\epsilon_k' - \E^* \epsilon_k'\otimes\epsilon_k'\right)\left(\Psi^{t-k}\right)^T \\
\beta_{2t} &=& \hat{\Psi}_n^{t-k}\left(\epsilon_k'\otimes\epsilon_k' - \E^* \epsilon_k'\otimes\epsilon_k'\right)\left(\Psi^{t-k}-\hat{\Psi}_n^{t-k}\right) \\
\beta_{3t} &=& \hat{\Psi}_n^{t-k}\left(\epsilon_k'\otimes\epsilon_k' - \E^* \epsilon_k'\otimes\epsilon_k' - \epsilon_k^*\otimes\epsilon_k^* + \E^* \epsilon_k^*\otimes\epsilon_k^*\right)\left(\hat{\Psi}_n^{t-k}\right)^T
\end{eqnarray*}
such that
$$
B_{kk}^{(s,t)}= \left\langle\beta_{1t}+\beta_{2t}+\beta_{3t},\beta_{1s}+\beta_{2s}+\beta_{3s}\right\rangle_\mathcal{S} .
$$
For the first two terms, we have, using (\ref{kronnorm}),
\begin{equation*}
\left\Vert\epsilon_k'\otimes\epsilon_k' - \E^* \left(\epsilon_k'\otimes\epsilon_k'\right)\right\Vert_{\mathcal{S}} \leq  \left\Vert\epsilon_k'\otimes\epsilon_k'\right\Vert_{\mathcal{S}} +\E^* \left\Vert\epsilon_k'\otimes\epsilon_k'\right\Vert_{\mathcal{S}} 
= \left\Vert\epsilon_k'\right\Vert^2 +\E^* \left\Vert\epsilon_k'\right\Vert^2
\end{equation*}
and we conclude as in a),\\
$$
\left\Vert\beta_{it}\right\Vert_\mathcal{S} = O_p(1)\hat{\delta}^{2(t-k)}\left\Vert\Psi-\hat{\Psi}_n\right\Vert_{\mathcal{L}}, \ i=1,2.
$$
with $\E O_p(1)=O(1)$ uniformly in $k,t$. For the third term, we abbreviate $\Delta_k=\epsilon_k'\otimes\epsilon_k' - \epsilon_k^*\otimes\epsilon_k^*$ such that
$$
\left\Vert\beta_{3t}\right\Vert_\mathcal{S} \leq \hat{\delta}^{2(t-k)}\left\Vert\Delta_k-\E^* \Delta_k\right\Vert_{\mathcal{S}} .
$$
Using Cauchy-Schwarz and those bounds on $\left\Vert\beta_{it}\right\Vert_\mathcal{S}, i=1, 2, 3$, we have for some generic constant $D$
\begin{eqnarray*}
\left\vert\E^* B_{kk}^{(s,t)} \right\vert &\leq & \E^* \left\vert\left\langle\beta_{1t}+\beta_{2t}+\beta_{3t},\beta_{1s}+\beta_{2s}+\beta_{3s}\right\rangle_\mathcal{S} \right\vert \\
&\leq & 4D\hat{\delta}^{2(t+s-2k)}\left\Vert\Psi-\hat{\Psi}_n\right\Vert_{\mathcal{L}}^2 \\
&& + 4D \hat{\delta}^{2(t+s-2k)}\left\Vert\Psi-\hat{\Psi}_n\right\Vert_{\mathcal{L}}\E^* \left\Vert\Delta_k-\E^* \Delta_k\right\Vert_{\mathcal{S}} \\
&& + \hat{\delta}^{2(t+s-2k)}\E^* \left\Vert\Delta_k-\E^* \Delta_k\right\Vert_{\mathcal{S}}^2 .
\end{eqnarray*}
As $\left\Vert\Psi-\hat{\Psi}_n\right\Vert_{\mathcal{L}} \underset{a.s.}\rightarrow 0$, and as, from Lemma \ref{lemma7} below,
\begin{equation*}
\E^* \left\Vert\Delta_k-\E^* \Delta_k\right\Vert_{\mathcal{S}} \leq \sqrt{\E^* \left\Vert\Delta_k-\E^* \Delta_k\right\Vert_{\mathcal{S}}^2} \ \text{and} \ \E^* \left\Vert\Delta_k-\E^* \Delta_k\right\Vert_{\mathcal{S}}^2 \leq \E^* \left\Vert\Delta_k\right\Vert_{\mathcal{S}}^2 \underset{p}\rightarrow 0
\end{equation*}
uniformly in $k$ as $\left(\epsilon_k',\epsilon_k^*\right)$ are identically distributed, we have
\begin{equation*}
\left\vert\E^* B_{kk}^{(s,t)}\right\vert = \hat{\delta}^{2(t+s-2k)}o_p(1)
\end{equation*}
uniformly in $k,s,t$. Hence, as for $k=l$, we have $k \leq \min (s,t)$
$$
\left\vert\sum_{s,t=0}^{n-1} \E^* \left\langle d_t,d_s\right\rangle_\mathcal{S} \right\vert \leq  \sum_{s,t=0}^{n-1}\sum_{k=1}^{\min(s,t)}\hat{\delta}^{2(t+s-2k)}\ o_p(1) = o_p(n)
$$
as the threefold sum is $O(n)$ by the same calculations as at the end of part a).

\textbf{c}) We consider a third case in the supplement \ref{supp} and show in detail that
$$
\sum_{s,t=0}^{n-1}\E^* \left\langle a_t,b_s\right\rangle_\mathcal{S} = o_p(1),
$$
i.e. it is of even smaller order than the terms considered in \textbf{a)} and \textbf{b}). The other components of $\sum_{s,t=0}^{n-1}\E^* \left\langle A_t,A_s\right\rangle_\mathcal{S}$ can be shown to be of order at most $o_p(n)$ in the same manner, and we conclude, from (\ref{d2Gamma})
\begin{eqnarray*}
n d^2_2\left(\hat{\Gamma}_n-\Gamma, \hat{\Gamma}_n^*-\hat{\Gamma}_n \right)  = \left\Vert x \right\Vert^2 o_p(1) .
\end{eqnarray*}
\end{proof}

\begin{lemma}\label{lemma7}
Let $\left(\epsilon_t',\epsilon_t^*\right), \ t=1,\ldots,n$, be defined as in the proof of Theorem \ref{bootmean}. Then, under the assumptions of that theorem, for all $k \ge 1$
\begin{equation*}
\E^* \left\Vert\epsilon_k' \otimes \epsilon_k'-\epsilon_k^* \otimes \epsilon_k^*\right\Vert_{\mathcal{S}}^2 \underset{p}\rightarrow 0 \quad \text{for } n \to \infty
\end{equation*}
\end{lemma}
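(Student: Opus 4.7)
The plan is to reduce the statement to a pair of second-moment estimates and then to handle each via a truncation argument. Decomposing
\[
\epsilon_k'\otimes\epsilon_k' - \epsilon_k^*\otimes\epsilon_k^* = (\epsilon_k'-\epsilon_k^*)\otimes\epsilon_k' + \epsilon_k^*\otimes(\epsilon_k'-\epsilon_k^*),
\]
applying Lemma \ref{lemmaHSnorm}b) to each summand, and using $\|A+B\|_{\mathcal{S}}^2\le 2\|A\|_{\mathcal{S}}^2+2\|B\|_{\mathcal{S}}^2$ yields
\[
\|\epsilon_k'\otimes\epsilon_k' - \epsilon_k^*\otimes\epsilon_k^*\|_{\mathcal{S}}^2 \le 2\|\epsilon_k'-\epsilon_k^*\|^2\bigl(\|\epsilon_k'\|^2+\|\epsilon_k^*\|^2\bigr).
\]
It therefore suffices to show that $\E^*[\|\epsilon_k'-\epsilon_k^*\|^2 \|\epsilon_k'\|^2]\to 0$ in probability, together with the analogous bound in which $\epsilon_k^*$ replaces $\epsilon_k'$ in the second factor.

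For each such term and for any threshold $M>0$, I would split according to whether the ``size'' factor exceeds $M$. On the bounded region, the expectation is at most $M^2\,\E^*\|\epsilon_k'-\epsilon_k^*\|^2 = M^2\,d_2^2(F,\hat{F}_n)$, which is $o_p(1)$ for fixed $M$ by Theorem \ref{theorem2}. On the complement I would use $\|\epsilon_k'-\epsilon_k^*\|^2\le 2(\|\epsilon_k'\|^2+\|\epsilon_k^*\|^2)$ together with a conditional Cauchy--Schwarz, producing tail pieces of the type $\E[\|\epsilon\|^4 \mathbf{1}_{\|\epsilon\|>M}]$, $\sqrt{\E^*\|\epsilon_k^*\|^4}\cdot\sqrt{\E[\|\epsilon\|^4\mathbf{1}_{\|\epsilon\|>M}]}$, and, when the truncation is placed on $\epsilon_k^*$, also $\E^*[\|\epsilon_k^*\|^4 \mathbf{1}_{\|\epsilon_k^*\|>M}]$. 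The $\epsilon_k'$-tails tend to $0$ as $M\to\infty$ by dominated convergence and assumption (i), uniformly in $n$.

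For the remaining $\epsilon_k^*$-moments, $\E^*\|\epsilon_k^*\|^4=\frac{1}{n}\sum_j\|\tilde{\epsilon}_j\|^4=O_p(1)$: this follows from the bound $\|\hat{\epsilon}_j\|\le \|X_j\|+\|\hat{\Psi}_n\|_{\mathcal{L}}\|X_{j-1}\|$, Lemma \ref{lemma4}b), Jensen's inequality applied to $\|\bar{\hat{\epsilon}}_n\|^4$, and $\frac{1}{n}\sum\|X_j\|^4=O_p(1)$, where the last estimate uses $\E\|X_0\|^4<\infty$, itself a consequence of $\|\Psi\|_{\mathcal{L}}<1$ together with (i) via the moving-average representation of $\{X_t\}$. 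For the tail term I would combine $\mathbf{1}_{\|\tilde{\epsilon}_j\|>M}\le \mathbf{1}_{\|\epsilon_j\|>M/2}+\mathbf{1}_{\|\tilde{\epsilon}_j-\epsilon_j\|>M/2}$ with the identity $\hat{\epsilon}_j-\epsilon_j=(\Psi-\hat{\Psi}_n)(X_{j-1})$, Lemma \ref{lemma4}a), and the LLN applied to $\|\epsilon_j\|^4\mathbf{1}_{\|\epsilon_j\|>M/2}$, showing that the tail is $o_p(1)$ as first $n\to\infty$ and then $M\to\infty$.

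The main obstacle is precisely controlling $\E^*[\|\epsilon_k^*\|^4\mathbf{1}_{\|\epsilon_k^*\|>M}]$: the $d_2$-optimal coupling only provides $L^2$ control on $\epsilon_k'-\epsilon_k^*$, so one must exploit the empirical structure of $\hat{F}_n$ and the $L^4$-closeness of $\tilde{\epsilon}_j$ to $\epsilon_j$ to obtain a uniform-integrability-type estimate. Everything else is a fairly standard combination of the coupling property, Theorem \ref{theorem2}, and Lemma \ref{lemma4}.
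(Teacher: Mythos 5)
Your plan is correct and shares its core reduction with the paper's proof: both exploit bilinearity of $\otimes$ together with $\Vert x\otimes y\Vert_{\mathcal S}=\Vert x\Vert\,\Vert y\Vert$ (the paper's bound (\ref{epsxeps}) with $k=l$) to reduce the claim to controlling $\E^*$ of $\Vert\epsilon_k'-\epsilon_k^*\Vert^2$ times a quadratic size factor. The difference lies in how the limit is then taken. The paper shows the integrand tends to $0$ in probability and appeals to ``dominated convergence'' with the explicit dominating variable $W=\Vert\epsilon_k'\Vert+U$, where $U$ bounds $\Vert\epsilon_k^*\Vert$ through $\Vert X_J\Vert$, $\Vert X_{J-1}\Vert$ and sample means, and $\E^*U^4<\infty$ by the strong law applied to $n^{-1}\sum_j\Vert X_j\Vert^4$. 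You instead truncate at a level $M$, bound the good part by $M^2 d_2^2(F,\hat F_n)=o_p(1)$ via Theorem \ref{theorem2}, and control the tail through a uniform-integrability estimate built from $\mathbf{1}_{\{\Vert\tilde\epsilon_j\Vert>M\}}\le\mathbf{1}_{\{\Vert\epsilon_j\Vert>M/2\}}+\mathbf{1}_{\{\Vert\tilde\epsilon_j-\epsilon_j\Vert>M/2\}}$, Lemma \ref{lemma4} and the law of large numbers. Both routes rest on the same ingredients ($\E\Vert\epsilon_t\Vert^4<\infty$, hence $\E\Vert X_0\Vert^4<\infty$ via the moving-average representation, and consistency of $\hat\Psi_n$), but your version buys rigor at exactly the delicate point: since both the integrand and the paper's dominating variable $W$ depend on $n$, a literal dominated-convergence appeal is not quite available, and what is really needed is the uniform-in-$n$ smallness of $\E^*[\Vert\epsilon_k^*\Vert^4\mathbf{1}_{\{\Vert\epsilon_k^*\Vert>M\}}]$ as $M\to\infty$ --- precisely the obstacle you single out and sketch how to overcome. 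The price is extra bookkeeping in the tails; when bounding $n^{-1}\sum_j\Vert\epsilon_j\Vert^4\mathbf{1}_{\{\Vert\tilde\epsilon_j-\epsilon_j\Vert>M/2\}}$ you should take care to use only fourth moments, e.g.\ by working on the event $\{\Vert\hat\Psi_n-\Psi\Vert_{\mathcal L}\le\eta\}$, whose probability tends to one, so that the indicator is dominated by $\mathbf{1}_{\{\eta\Vert X_{j-1}\Vert+\Vert\bar{\hat{\epsilon}}_n\Vert>M/2\}}$.
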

\begin{proof}
From (\ref{epsxeps}) with $k=l$
\begin{equation*}
\left\Vert\epsilon_k' \otimes \epsilon_k'-\epsilon_k^* \otimes \epsilon_k^*\right\Vert_{\mathcal{S}}^2 \leq \left\Vert\epsilon_k'-\epsilon_k^* \right\Vert^2\left(\left\Vert\epsilon_k'\right\vert+\left\Vert\epsilon_k^* \right\Vert\right)^2.
\end{equation*}
For $n \to \infty$, the right-hand side converges to 0 in probability as $\left\Vert\epsilon_k'-\epsilon_k^* \right\Vert\underset{p}\rightarrow 0$, which follows from $d_2^2\left(F,\hat{F}_n\right)=\E^* \left\Vert\epsilon_k'-\epsilon_k^* \right\Vert^2\underset{p}\rightarrow 0$. The lemma then follows from a dominated convergence argument where we specify a real random variable $W = \left\Vert\epsilon_k'\right\Vert + U$ with $\left\Vert\epsilon_k^* \right\Vert \leq U, \E^* U^4< \infty$. Then 
$$
\left\Vert\epsilon_k'-\epsilon_k^* \right\Vert^2 \left(\left\Vert\epsilon_k'\right\vert+\left\Vert\epsilon_k^* \right\Vert\right)^2 \leq \left(\left\Vert\epsilon_k'\right\vert+\left\Vert\epsilon_k^* \right\Vert\right)^4 \le W^4.
$$
Note that $\E^* \left\Vert\epsilon_k' \right\Vert^4=\E\left\Vert\epsilon_k\right\Vert^4 < \infty$ by assumption, and, therefore, $\E^* U^4< \infty$.

Recall that $\epsilon_k^*$ can be written as $\tilde{\epsilon}_J$ with $J$ being a Laplace variable in $\{1,\ldots,n\}$, i.e. $pr\left(J=k\right)=\dfrac{1}{n}, \ k=1,\ldots,n$.
Hence, 
\begin{equation*}
\epsilon_k^*=\tilde{\epsilon}_J=\hat{\epsilon}_J-\dfrac{1}{n}\sum_{k=1}^n\hat{\epsilon}_k=X_J-\hat{\Psi}_n\left(X_{J-1}\right)-\dfrac{1}{n}\sum_{k=1}^nX_k+\dfrac{1}{n}\sum_{k=1}^n\hat{\Psi}_n\left(X_{k-1}\right)
\end{equation*}
and using $\left\Vert\hat{\Psi}_n\right\Vert_\mathcal{L} \leq \hat{\delta}$ for large enough $n$ from Lemma \ref{lemma4}, we get
\begin{equation*}
\left\Vert\epsilon_k^*\right\Vert \leq \left\Vert X_J \right\Vert + \hat{\delta}\left\Vert X_{J-1} \right\Vert + \left\Vert \dfrac{1}{n}\sum_{k=1}^n X_k \right\Vert + \hat{\delta}\left\Vert \dfrac{1}{n}\sum_{k=1}^n X_{k-1} \right\Vert=U
\end{equation*}
We have $\E^* U^4<\infty$, as, e.g.,
\begin{equation*}
\E^*  \left\Vert X_J \right\Vert^4 = \dfrac{1}{n}\sum_{j=1}^n\left\Vert X_j \right\Vert^4 \leq C
\end{equation*}
for any $C>\E\left\Vert X_J \right\Vert^4$ and all large enough $n$ by the strong law of large numbers for strictly stationary real-valued time series.
\end{proof}

\begin{proof} (\textbf{Theorem \ref{bootacov}})\\
As in the proof of Theorem \ref{bootcov}, we get, using Lemma \ref{lemma5},
\begin{eqnarray*}
& &\left(\hat{C}_n'-C\right) -\left(\hat{C}_n^*-\hat{C}_n \hat{\Pi}_{k_n} \right) = \dfrac{1}{n} \sum_{t=0}^{n-1}A_t + O_p\left(\dfrac{1}{n}\right) \quad \quad \text{with now}\\
& &A_t = X_t'\otimes X_{t+1}'-\E^*\left(X_t'\otimes X_{t+1}'\right)-\left(X_t^*\otimes X_{t+1}^*-\E^*\left(X_t^*\otimes X_{t+1}^*\right)\right),
\end{eqnarray*}
From this point onwards, the proof follows exactly the same steps as the proof of Theorem \ref{bootcov} except that from the recursion (\ref{finMA}) and (\ref{kronecker}) we get an additional factor $\Psi$ resp. $\hat{\Psi}_n$ on the left hand side. E.g., we now have
$$
c_t = \sum_{k=1}^t \sum_{l=1, l \neq k}^{t+1} \left[\Psi^{t+1-l}\left(\epsilon_k'\otimes\epsilon_l'\right)\left(\Psi^{t-k}\right)^T- \hat{\Psi}_n^{t+1-l}\left(\epsilon_k^*\otimes\epsilon_l^*\right)\left(\hat{\Psi}_n^{t-k}\right)^T\right].
$$
As $\Vert \Psi \Vert_\mathcal{L}, \Vert \hat{\Psi}_n \Vert_\mathcal{L} < \hat{\delta} < 1$ a.s. for all large enough $n$, all the bounds of the proof of Theorem \ref{bootcov} remain valid.
\end{proof}

\section*{Acknowledgements}
The authors gratefully acknowledge support by the German Academic Exchange Service (DAAD) as well as by the Center for Mathematical and Computational Modelling (CM)$^\text{2}$ funded by the state of Rhineland-Palatinate.\\

\section{Supplementary Material: Details of Proofs} \label{supp}

\begin{proof} (\textbf{Lemma \ref{lemma1}})
\begin{eqnarray*}
\hat{\Gamma}_n\hat{\Gamma}_n^{\dagger}\left(x\right) &=& \hat{\Gamma}\left(\sum_{k=1}^{k_n} \dfrac{1}{\hat{\lambda}_k}\left\langle \hat{v}_k,x\right\rangle \hat{v}_k\right) \\
&=& \sum_{k=1}^{k_n}\dfrac{1}{\hat{\lambda}_k}\left\langle \hat{v}_k,x\right\rangle \hat{\Gamma}_n\left(\hat{v}_k\right) \\
&=& \sum_{k=1}^{k_n} \hat{v}_k \otimes \hat{v}_k\left(x\right)=\hat{\Pi}_{k_n}\left(x\right)
\end{eqnarray*}
as $\hat{\Gamma}_n\left(\hat{v}_k\right)=\hat{\lambda}_k\hat{v}_k$. Analogously, we get $\hat{\Gamma}_n^{\dagger}\hat{\Gamma}_n\left(x\right)=\hat{\Pi}_{k_n}\left(x\right)$. Now,
\begin{equation*}
\hat{\Psi}_n\left(x\right)=\hat{C}_n\hat{\Gamma}_n^{\dagger}\left(x\right)=\sum_{k=1}^{k_n}\dfrac{1}{\hat{\lambda}_k}\left\langle \hat{v}_k,x\right\rangle \hat{C}_n\left(\hat{v}_k\right)
\end{equation*}
\begin{eqnarray*}
\hat{\Psi}_n\hat{\Pi}_{k_n}\left(x\right) &=& \hat{\Psi}_n\left(\sum_{j=1}^{k_n}\left\langle \hat{v}_j,x \right\rangle \hat{v}_j\right) \\
&=& \sum_{j=1}^{k_n}\left\langle \hat{v}_j,x \right\rangle\hat{\Psi}_n\left(\hat{v}_j\right)\\
&=& \sum_{j=1}^{k_n}\left\langle \hat{v}_j,x \right\rangle \dfrac{1}{\hat{\lambda}_j}\hat{C}_n\left(\hat{v}_j\right)
\end{eqnarray*}
as $\hat{\Gamma}_n^{\dagger}\left(\hat{v}_j\right)=\dfrac{1}{\hat{\lambda}_j}\hat{v}_j$.
\end{proof}

\begin{proof} (\textbf{Lemma \ref{lemma2}})
From Lemma \ref{lemma1}, we have $\hat{\Psi}_n-\Psi\hat{\Pi}_p=\left(\hat{C}_n-\Psi\hat{\Gamma}_n\right)\hat{\Gamma}_n^{\dagger}$
\begin{eqnarray*}
n\left(\hat{C}_n-\Psi\hat{\Gamma}_n\right)\left(x\right)&=& \sum_{t=1}^n X_{t-1}\otimes X_t\left(x\right) - \Psi \left(\sum_{t=1}^n X_{t-1} \otimes X_{t-1}\left(x\right)\right) \\
&=& \sum_{t=1}^n \left\langle X_{t-1},x\right\rangle X_t - \sum_{t=1}^n\Psi \left(\left\langle X_{t-1},x\right\rangle X_t\right) \\
&=& \sum_{t=1}^n \left\langle X_{t-1},x\right\rangle \left(X_t-\Psi\left(X_{t-1}\right)\right)=\sum_{t=1}^n X_{t-1} \otimes \epsilon_t\left(x\right)
\end{eqnarray*}
\end{proof}

\begin{proof} (\textbf{Lemma \ref{lemma4} a)})
Note that \citep{Bosq} considers $\tilde{\Psi}_n=\hat{\Pi}_{k_n}\hat{\Psi}_n$ instead of $\hat{\Psi}_n$ as an estimate of $\Psi$. From the discussion in the proof of Theorem \ref{theorem2}, the conditions of Theorem 8.7 of \citep{Bosq} are satisfied. In our notation,
\begin{equation*}
\left\Vert\hat{\Psi}_n-\Psi\right\Vert_{\mathcal{L}} \leq \left\Vert\hat{\Psi}_n-\Psi\Pi_{k_n}\right\Vert_{\mathcal{L}} + \left\Vert\Psi\left(\Pi_{k_n}-\hat{\Pi}_{k_n}\right)\right\Vert_{\mathcal{L}} + \left\Vert\Psi\hat{\Pi}_{k_n}-\Psi\right\Vert_{\mathcal{L}}
\end{equation*}
From the proof of Theorem 8.7 of \citep{Bosq}, in particular (8.92), (8.93), the second and third terms converge to 0 a.s.\\

For the first term, we have in our notation for every $x$,
\begin{eqnarray*}
\left(\hat{\Psi}_n-\Psi\Pi_{k_n}\right)(x) &=& \hat{C}_n\left(\sum_{j=1}^{k_n}\dfrac{1}{\hat{\lambda}_j}\left\langle x,\hat{c}_j\hat{v}_j\right\rangle\hat{c}_j\hat{v}_j\right)-C\left(\sum_{j=1}^{k_n}\dfrac{1}{\lambda_j}\left\langle x,v_j\right\rangle v_j\right) \\
&=& \hat{C}_n\left(\sum_{j=1}^{k_n}\left(\dfrac{1}{\hat{\lambda}_j}-\dfrac{1}{\lambda_j}\right)\left\langle x,\hat{c}_j\hat{v}_j\right\rangle\hat{c}_j\hat{v}_j\right) \\
&& +\hat{C}_n\left(\sum_{j=1}^{k_n}\dfrac{1}{\lambda_j}\left(\left\langle x,\hat{c}_j\hat{v}_j\right\rangle -\left\langle x,v_j\right\rangle\right)\hat{c}_j\hat{v}_j\right) \\
&& +\hat{C}_n\left(\sum_{j=1}^{k_n}\dfrac{1}{\lambda_j}\left\langle x,v_j\right\rangle\left(\hat{c}_j\hat{v}_j-v_j\right)\right) \\
&& +\left(\hat{C}_n-C\right)\left(\sum_{j=1}^{k_n}\dfrac{1}{\lambda_j}\left\langle x,v_j\right\rangle v_j\right) \\
&=& a_{n1}(x)+a_{n2}(x)+a_{n3}(x)+a_{n4}(x).
\end{eqnarray*}
With $A_{ni}=\sup_{\left\Vert x\right\Vert \leq 1} a_{ni}(x), \ 1 \leq i \leq 4$, we have $\left\Vert\hat{\Psi}_n-\Psi\Pi_{k_n}\right\Vert_{\mathcal{L}} \leq \sum_{i=1}^4 A_{ni}$ and, from the proof of Theorem 8.7 of \citep{Bosq}, (8.84), (8.86), (8.88) and (8.90), we have $A_{ni} \underset{a.s.}\rightarrow 0$ for $i=1,\ldots, 4$.
\end{proof}

\begin{proof} (part of \textbf{Theorem \ref{bootcov}})
As another component of $\sum_{s,t=0}^{n-1}\E^* \left\langle A_t,A_s\right\rangle_\mathcal{S}$, we study here
\begin{equation*}
\sum_{s,t=0}^{n-1}\E^* \left\langle a_t,b_s\right\rangle_\mathcal{S} =\sum_{s,t=0}^{n-1}\sum_{k=1}^s\E^* B_k^{(s,t)}
\end{equation*}
where, with $\Xi_0=X_0'\otimes X_0' - \E^* X_0'\otimes X_0'$,
$$
B_k^{(s,t)} = \left\langle\Psi^t \Xi_0 \left(\Psi^t\right)^T - \hat{\Psi}_n^t \Xi_0 \left(\hat{\Psi}_n^t\right)^T, \Psi^{s-k}\left(X_0' \otimes \epsilon_k' \right)\left(\Psi^s\right)^T -\hat{\Psi}_n^{s-k}\left(X_0' \otimes \epsilon_k^* \right)\left(\hat{\Psi}_n^s\right)^T \right\rangle_\mathcal{S} .
$$
We decompose the left factor of the scalar product into $\gamma_{1t}+\gamma_{2t}$ with
\begin{eqnarray*}
\gamma_{1t}&=& \left(\Psi^t-\hat{\Psi}_n^t\right)\Xi_0\left(\Psi^t\right)^T\\
\gamma_{2t}&=& \left(\Psi^t\right)^T \Xi_0 \left(\Psi^t-\hat{\Psi}_n^t\right)^T
\end{eqnarray*}
Analogously, the second factor is $\beta_{1s}+\beta_{2s}+\beta_{3s}$ with
\begin{eqnarray*}
\beta_{1s} &=& \left(\Psi^{s-k}-\hat{\Psi}_n^{s-k}\right)\left(X_0' \otimes \epsilon_k' \right)\left(\Psi^s\right)^T\\
\beta_{2s} &=& \hat{\Psi}_n^{s-k}\left(X_0' \otimes \epsilon_k' \right)\left(\Psi^s-\hat{\Psi}_n^s\right)^T\\
\beta_{3s} &=& \hat{\Psi}_n^{s-k}\left(X_0' \otimes \epsilon_k' - X_0' \otimes \epsilon_k^* \right)\left(\hat{\Psi}_n^s\right)^T=\hat{\Psi}_n^{s-k}\left[X_0' \otimes \left(\epsilon_k'-\epsilon_k^*\right) \right]\left(\hat{\Psi}_n^s\right)^T
\end{eqnarray*}
As in part a) and b) of the proof, we have for some constant $D$
\begin{eqnarray*}
\left\Vert\gamma_{it}\right\Vert_\mathcal{S} &\leq & D\hat{\delta}^{2t}\left\Vert\Psi-\hat{\Psi}_n\right\Vert_{\mathcal{L}}\left\Vert \Xi_0 \right\Vert_{\mathcal{S}}, \ i=1,2 \\
\left\Vert\beta_{is}\right\Vert_\mathcal{S} &\leq & D\hat{\delta}^{2s-k}\left\Vert\Psi-\hat{\Psi}_n\right\Vert_{\mathcal{L}}\left\Vert X_0' \otimes \epsilon_k' \right\Vert_{\mathcal{S}}, \ i=1,2 \\
\left\Vert\beta_{3s}\right\Vert_\mathcal{S} &\leq &\hat{\delta}^{2s-k}\left\Vert X_0' \otimes \left(\epsilon_k'-\epsilon_k^*\right) \right\Vert_{\mathcal{S}} .
\end{eqnarray*}
We use 
\begin{eqnarray*}
&&\left\Vert\Xi_0\right\Vert_{\mathcal{S}} \leq \left\Vert X_0'\otimes X_0'\right\Vert_{\mathcal{S}} + \E^* \left\Vert X_0'\otimes X_0'\right\Vert_{\mathcal{S}} = \left\Vert X_0'\right\Vert^2 + \E^* \left\Vert X_0'\right\Vert^2 \\
&&\left\Vert X_0' \otimes \epsilon_k' \right\Vert_{\mathcal{S}} = \left\Vert X_0'\right\Vert \left\Vert \epsilon_k'\right\Vert \\
&&\left\Vert X_0' \otimes \left(\epsilon_k'-\epsilon_k^*\right) \right\Vert_{\mathcal{S}} = \left\Vert X_0'\right\Vert \left\Vert \epsilon_k'-\epsilon_k^*\right\Vert 
\end{eqnarray*}
Using Cauchy-Schwarz and independence of $X_0'$ and $\left(\epsilon_k',\epsilon_k^*\right)$, we have for some suitable constant $D$
\begin{eqnarray*}
\E^* \left\Vert\Xi_0\right\Vert_{\mathcal{S}}\left\Vert X_0' \otimes \epsilon_k' \right\Vert_{\mathcal{S}} &\leq & \E^* \left(\left\Vert X_0'\right\Vert^2 + \E^* \left\Vert X_0'\right\Vert^2\right)\left\Vert \epsilon_k'\right\Vert\left\Vert X_0'\right\Vert \\
&\le& 2 \E^* \left\Vert X_0'\right\Vert^3 \E^* \left\Vert \epsilon_k'\right\Vert < \infty \\
\E^* \left\Vert\Xi_0\right\Vert_{\mathcal{S}}\left\Vert\epsilon_k'-\epsilon_k^*\right\Vert\left\Vert X_0'\right\Vert &=& \E^* \left\Vert\epsilon_k'-\epsilon_k^*\right\Vert \E^* \left\Vert\Xi_0\right\Vert_{\mathcal{S}} \left\Vert X_0'\right\Vert \\
&\leq & d_2\left(F,\hat{F}_n\right)2 \E^* \left\Vert X_0'\right\Vert^3,
\end{eqnarray*}
such that
\begin{eqnarray*}
\left\vert\E^* B_k^{(s,t)}\right\vert &\leq & \E^* \left\vert\left\langle\gamma_{1t}+\gamma_{2t},\beta_{1s}+\beta_{2s}+\beta_{3s}\right\rangle_\mathcal{S} \right\vert \\
&\leq & 4D\hat{\delta}^{2(t+s)-k}\left\Vert\Psi-\hat{\Psi}_n\right\Vert_{\mathcal{L}}^2 \\
&& +2D\hat{\delta}^{2(t+s)-k}\left\Vert\Psi-\hat{\Psi}_n\right\Vert_{\mathcal{L}}d_2\left(F,\hat{F}_n\right) \\
&& = \hat{\delta}^{2(t+s)-k} o_p(1)
\end{eqnarray*}
uniformly in $t,s,k$. Therefore,
$$
\sum_{s,t=0}^{n-1}\E^* \left\langle a_t,b_s\right\rangle \leq  \sum_{s,t=0}^{n-1}\sum_{k=1}^s\hat{\delta}^{2(t+s)-k} o_p(1) 
\leq  \dfrac{1}{1-\hat{\delta}}\sum_{s,t=0}^{n-1}\hat{\delta}^{2t+s} o_p(1) 
\leq \dfrac{1}{(1-\hat{\delta})^2}\dfrac{1}{1-\hat{\delta}^2} o_p(1) .
$$
Hence, this term is of order $o_p(1)$.
\end{proof}

\bigskip

\bigskip
\textbf{Acknowledgement}: This paper was supported by the PhD programme \textit{Mathematics in Industry and Commerce} (MIC), funded by the German academic exchange service (DAAD), and by the \textit{Center for Cognitive Science}, Technische Universit{\"a}t Kaiserslautern\\

\bigskip

\bigskip
\noindent Corresponding author: \\
Prof. Dr. J{\"u}rgen Franke\\
Technische Universit{\"a}t Kaiserslautern, Department of Mathematics , Erwin-Schr\"odinger-Stra{\ss}e, D-67663 Kaiserslautern, Germany \\
Tel.: +49-(0)631-205-2741\\
Fax: +49-(0)631-205-2748\\
e-mail: franke@mathematik.uni-kl.de\\

\newpage

{}


\begin{thebibliography}{}

\bibitem[Aneiros-P{\'e}rez et al.(2011)]{APCVF}  
Aneiros-P{\'e}rez, G., Cao, R. and Vilar-Fern{\'a}ndez, J.M. (2011)
\newblock Functional methods for time series prediction: A nonparametric approach.
\newblock \emph{J. Forecast.} 30:377-392.

\bibitem[Bickel and Freedman(1981)]{Bickel} 
Bickel, P.J., Freedman, D.A. (1981)
\newblock Some asymptotic theory for the bootstrap.
\newblock \emph{Ann. Statist.}, 9:1196-1217.

\bibitem[Bosq(2000)]{Bosq} 
Bosq, D. (2000)
\newblock \emph{Linear processes in function spaces}.
\newblock Springer, Berlin-Heidelberg-New York.

\bibitem[Didericksen et al.(2012)]{Didericksen} 
 Didericksen, D., Kokoszka P. and Zhang, X. (2012)
\newblock Empirical properties of forecasts with the functional autoregressive model.
\newblock \emph{Computat. Statist.}, 27:285-298.

\bibitem[Fern\'{a}ndez de Castro et al.(2005)]{Castro} 
Fern\'{a}ndez de Castro, B., Guillas, S. and Gonz\'{a}lez Manteiga, W. (2005)
\newblock Functional samples and bootstrap for predicting sulfur dioxide levels.
\newblock \emph{Technometrics}, 47:212-222.

\bibitem[Ferraty et al.(2010)]{Ferraty10} 
Ferraty, F., Van Keilegom, I. and Vieu P. (2010)
\newblock On the validity of the bootstrap in non-parametric functional regression.
\newblock \emph{Scand. J. Stat.}, 37:286-306.

\bibitem[Ferraty et al.(2012)]{Ferraty12} 
Ferraty, F., Van Keilegom, I. and Vieu P. (2012)
\newblock Regression when both response and regressor are functional.
\newblock \emph{J. Multivariate Anal.}, 109:10-28.

\bibitem[Franke et al.(2002)]{Franke02} 
Franke, J., Kreiss, J.P. and Mammen, E. (2002)
\newblock Bootstrap of kernel smoothing in nonlinear time series.
\newblock \emph{Bernoulli}, 8:1-37.

\bibitem[Franke et al.(2018)]{Franke18} 
Franke, J., Nyarige, E.G., and Fischer, A. (2018)
\newblock On changepoint detection in a series of stimulus response data.
\newblock WIMA \emph{Report}, 165, Technische Universit{\"a}t Kaiserslautern, urn:nbn:de:hbz:386-kluedo-51399

\bibitem[Gabrys and Kokoszka(2007)]{Gabrys} 
Gabrys, R. and Kokoszka, P. (2007)
\newblock Portmanteau test of independence for functional observations.
\newblock \emph{J. Amer. Statist. Assoc.}, 102:1338-1348.

\bibitem[Gonz{\'a}lez-Manteiga and Mart{\'i}nez-Calvo(2011)]{GMMC}  
Gonz{\'a}lez-Manteiga, W. and Mart{\'i}nez-Calvo, A. (2011)
\newblock Bootstrap in functional linear regression.
\newblock \emph{J. Statist. Plann. Inference} 141:453-461.

\bibitem[Gonz{\'a}lez-Manteiga et al.(2014)]{GMGRMCGP}  
Gonz{\'a}lez-Manteiga, W.,  Gonz{\'a}lez-Rodr{\'i}guez, G., Mart{\'i}nez-Calvo, A. and Garc{\'i}a-Portugu{\'e}s, E. (2014)
\newblock Bootstrap independence test for functional linear models.
\newblock arXiv:1210.1072v3[stat.ME].

\bibitem[Guillas(2001)]{Guillas}  
Guillas, S. (2001)
\newblock Rates of convergence of auto correlation estimates for autoregressive Hilbertian processes.
\newblock \emph{Statist. Probab. Lett.}, 55:281-291.

\bibitem[Horv\'{a}th and Kokoszka(2010)]{Horvath} 
Horv\'{a}th, L. and Kokoszka P. (2010)
\newblock \emph{Inference for Functional Data with Applications}.  
\newblock Springer, Berlin-Heidelberg-New York.

\bibitem[Horv\'{a}th et al.(2010)]{HHK} 
Horv\'{a}th, L., Hu\v{s}kov\'{a}, M. and Kokoszka, P. (2010)
\newblock Testing the stability of the functional autoregressive process.
\newblock \emph{J. Multivariate Anal.}, 101:352-357.

\bibitem[Hyndman and Shang(2009)]{Hyndman} 
Hyndman, R.J. and Shang, H.L. (2009)
\newblock Forecasting functional time series.
\newblock \emph{J. Korean Statist. Soc.}, 38:199-211.

\bibitem[Kargin and Onatski(2008)]{Kargin} 
Kargin, V. and Onatski, A. (2008)
\newblock Curve forecasting by functional autoregression.
\newblock \emph{J. Multivariate Anal.}, 99:2508-2526.

\bibitem[Kreiss and Franke(1992)]{Franke} 
Kreiss, J.-P. and Franke, J. (1992) 
\newblock Bootstrapping stationary autoregressive moving average models. 
\newblock \emph{J. Time Series Anal.}, 13:297-317.

\bibitem[Kreiss et al.(2011)]{KreissPP} 
Kreiss, J.-P., Paparoditis, E. and Politis, D. (2011) 
\newblock On the range of validity of the autoregressive sieve bootstrap. 
\newblock \emph{Ann. Statist.}, 39:2103-2130.

\bibitem[Kreiss and Paparoditis(2011)]{Kreiss} 
Kreiss, J.-P. and Paparoditis, E. (2011) 
\newblock Bootstrap methods for dependent data: A review.
\newblock \emph{J. Korean Statist. Soc.}, 40:357-378.

\bibitem[Kokoszka et al.(2008)]{Kokoszka} 
Kokoszka, P., Maslova, I., Sojka, J. and Zhu, L. (2008) 
\newblock Testing for lack of dependence in the functional linear model. 
\newblock \emph{Canadian J. Statist.}, 36:2:1-16.

\bibitem[Kokoszka(2012)]{Kokoszka12} 
Kokoszka, P. (2012) 
\newblock Dependent functional data. 
\newblock \emph{ISRN Probability and Statistics}, Article ID 958254, doi:10.5402/2012/958254.

\bibitem[Mas(2007)]{Mas} 
Mas, A. (2007) 
\newblock Weak convergence in the functional autoregressive model. 
\newblock \emph{J. Multivariate Anal.}, 98:1231-1261.

\bibitem[Mas and Pumo(2009)]{Pumo} 
Mas, A. and Pumo, B. (2009) 
\newblock Linear processes for functional data. 
\newblock arXiv:0901.2503v1[math.ST].

\bibitem[Meise and Vogt(1997)]{Meise} 
Meise, R. and Vogt, D. (1997)
\newblock \emph{Introduction to Functional Analysis}.  
\newblock Clarendon Press, Oxford.

\bibitem[Mingotti et al.(2015)]{Romo} 
Mingotti, N., Lillo, R.E. and Romo, J. (2015) 
\newblock A random walk test of for functional time series. 
\newblock \emph{DES - Working Papers. Statistics and Econometrics}, WS ws1506, Universidad Carlos III de Madrid. Departamento de Estadística.

\bibitem[Nyarige(2016)]{Nyarige}
Nyarige, E.G. (2016): The Bootstrap for the Functional Autoregressive Model FAR(1). PhD Thesis, Technische Universit{\"a}t Kaiserslautern.

\bibitem[Paparoditis and Sapatinas(2015)]{Paparoditis} 
Paparoditis, E. and Sapatinas, T. (2015) 
\newblock Bootstrap based K-sample testing for functional data. 
\newblock arXiv:1409.4317v2[math.ST].

\bibitem[Politis and Romano(1994)]{Politis} 
Politis, D. and Romano, J. (1994) 
\newblock Limit theorems for weakly dependent Hilbert space valued random variables with application to the stationary bootstrap. 
\newblock \emph{Statist. Sinica}, 4:461-476.

\bibitem[Ra\~{n}a et al.(2016)]{Rana} 
Ra\~{n}a, P., Aneiros, G., Vilar, J. and Vieu, P. (2016) 
\newblock Bootstrap confidence intervals in functional nonparametric regression under dependence. 
\newblock \emph{Electron. J. Stat.}, 10:1973-1999.

\bibitem[Shang(2015)]{Shang} 
Shang, H.L. (2015) 
\newblock Resampling techniques for estimating the distribution of descriptive statistics of functional data. 
\newblock \emph{Comm. Statist. Simulation Comput.}, 44:614-635.

\bibitem[Zhu and Politis(2017)]{Zhu} 
Zhu, T. and Politis, D. (2017) 
\newblock Kernel estimates of nonparametric functional autoregression models and their bootstrap approximation. 
\newblock \emph{Electron. J. Stat.}, 11:2876–2906.


\end{thebibliography}
\end{document}